\def\set@page@layout#1#2{%
  \setlength\textwidth{#1}
  \@settopoint\textwidth
  \setlength\textheight{#2}
  \@settopoint\textheight
%
  \setlength\@tempdima        {\paperwidth}
  \addtolength\@tempdima      {-\textwidth}
  \setlength\oddsidemargin    {.5\@tempdima}
  \addtolength\oddsidemargin  {-1in}
  \setlength\evensidemargin   {\oddsidemargin}
  \@settopoint\oddsidemargin
  \@settopoint\evensidemargin
%
  \setlength\topmargin{\paperheight}
  \addtolength\topmargin{-2in}
  \addtolength\topmargin{-\headheight}
  \addtolength\topmargin{-\headsep}
  \addtolength\topmargin{-\textheight}
  \addtolength\topmargin{-\footskip}     
  \addtolength\topmargin{-.5\topmargin}
  \@settopoint\topmargin
}
\def\d{\mathrm{d}}
\def\X{\mathcal{X}}
\def \Y{\mathcal{Y}} 
\def\M{\mathcal{M}} 
\def\MM{\mathscr{M}} 
\def\Z{\mathbb{Z}_+}
\def\r{r} 
\def\N{\mathbb{N}}
\def \R{\mathbb{R}}
\def \E{\mathbb{E}}
\def\F{\mathcal{F}}
\def\L{\mathcal{L}} 
\def\Gwf{\mathcal{A}_{K}} 
\def\Gcir{\mathcal{B}} 
\def\Gn{B} 
\def\A{\mathcal{A}}
\def\Fdir{\mathscr{F}_{\Pi}}
\def\Fgamma{\mathscr{F}_{\Gamma}}
\def\DK{\Delta_{K}}
\newcommand{\bs}[1]{\boldsymbol{#1}}
\newcommand{\norm}[1]{|{#1}|}
\def\na{\theta}
\def \aa {\bs\alpha}
\def \yy {\mathbf{y}}
\def \ee {\mathbf{e}}
\def \mm {\mathbf{m}}
\def \nn {\mathbf{n}}
\def \ii {\mathbf{i}}
\def \oo {\mathbf{0}}
\def \xx {\mathbf{x}}
\def \zz {\mathbf{z}}
\def \XX {\mathbf{X}}
\def \ZZ {\mathbf{Z}}
\def \ss {s} 
\def \SS {S} 
\def\hw{\widehat{w}}
\newtheorem{theorem1}{Theorem}[section]
\newenvironment{theorem}{\begin{theorem1}}{\end{theorem1}}
\newtheorem{proposition}[theorem1]{Proposition}
\newtheorem{definition1}[theorem1]{Definition}
\newtheorem{remark1}[theorem1]{Remark}
\newtheorem{example1}[theorem1]{Example}
\newtheorem{lemma1}[theorem1]{Lemma}
\newenvironment{lemma}{\begin{lemma1}}{\end{lemma1}}
\newcommand{\todo}[1]{ \ifthenelse{ \equal{\draft}{true} }{{\color{red} #1 }}{}}
\newcommand{\draft}{true}
\long\def\symbolfootnote[#1]#2{\begingroup\def\thefootnote{\hspace*{-1mm}\fnsymbol{footnote}}\footnote[#1]{#2}\endgroup}
\title{\bf \vspace{-2.5cm}
Conjugacy properties of time-evolving Dirichlet and gamma
random measures
}
\author{
\normalsize\textsc{Omiros Papaspiliopoulos}\\[1mm]
\normalsize\emph{ICREA and Universitat Pompeu Fabra}\\[2mm]
\normalsize\textsc{Matteo Ruggiero
}\\[1mm]
\normalsize\emph{University of Torino and Collegio Carlo Alberto}\\[2mm]
\normalsize\textsc{Dario Span\`o}\\[1mm]
\normalsize\emph{University of Warwick}
}
\date{\today}
\begin{document}

\maketitle
\thispagestyle{empty}

\vspace{-5mm}
\begin{center}
\begin{minipage}{.75\textwidth}
\footnotesize\noindent
We extend classic characterisations of posterior distributions under Dirichlet process and gamma random measures priors to a dynamic framework. We consider the problem of learning, from indirect observations, two families of time-dependent processes of interest in Bayesian nonparametrics: the first is a dependent Dirichlet process driven by a Fleming--Viot model, and the data are random samples from the process state at discrete times; the second is a collection of dependent gamma random measures driven by a Dawson--Watanabe  model, and the data are collected according to a Poisson point process with intensity given by the process state at discrete times. Both driving processes are diffusions taking values in the space of discrete measures whose support varies with time, and are stationary and reversible with respect to Dirichlet and gamma priors respectively.  A common methodology is developed to obtain in closed form the time-marginal posteriors given past and present data. These are shown to belong to classes of finite mixtures of Dirichlet processes and gamma random measures for the two models respectively, yielding conjugacy of these classes to the type of data we consider. We provide explicit results on the parameters of the mixture components and on the mixing weights, which are time-varying and drive the mixtures towards the respective priors in absence of further data. Explicit algorithms are provided to recursively compute the parameters of the mixtures. Our results are based on the projective properties of the signals and on certain duality properties of their projections.  \\[-2mm]

\textbf{Keywords}:
Bayesian nonparametrics,
Dawson--Watanabe process,
Dirichlet process,
duality,
Fleming--Viot process,
gamma random measure. \\[-2mm]

\textbf{MSC} Primary:
62M05, 
62M20. 
Secondary: 
62G05,  	
60J60, 
60G57. 
\end{minipage}
\end{center}

\linespread{1.2}


\newpage

\tableofcontents

\section{Introduction}


\subsection{Motivation and main contributions}

An active area of research in Bayesian nonparametrics is the
construction and the statistical learning of so-called dependent
processes. These aim at accommodating weaker forms of dependence than exchangeability among the data, such as partial exchangeability in the sense of de Finetti. The task is then to let the infinite-dimensional parameter, represented by a
random measure, depend on a covariate, so that the generated data are exchangeable only conditional on the same covariate value, but not overall exchangeable. This approach was  inspired by \cite{ME99,ME00} and
has received considerable attention since. 

In the context of this article, the most relevant strand of this literature attempts to build time evolution into standard random measures  for
semi-parametric time-series analysis, combining the merits of flexible exchangeable modelling afforded by
random measures with those of mainstream generalised linear and time
series modelling.
For the case of Dirichlet
processes, the reference model in Bayesian nonparametrics introduced by \cite{F73}, the time evolution has often been built into the process by
exploiting its celebrated stick-breaking representation \citep{S94}. For example, 
\cite{D06} models the dependent process as an autoregression
with Dirichlet distributed innovations, 
\cite{Cea06} models the noise in a dynamic linear model with a
Dirichlet process mixture, 
\cite{CDD07} develops a time-varying Dirichlet mixture with reweighing
and movement of atoms in the stick-breaking representation, 
\cite{RT08} induces the dependence in time only via the atoms in the
stick-breaking representation, by
making them into an heteroskedastic random walk. See also \cite{CT12,Cea16,GS06,Gea16,MR16}. The stick-breaking representation of the Dirichlet process has demonstrated its versatility for constructing dependent processes, but makes it hard to derive any analytical information on the posterior structure of the quantities involved. 
Parallel to these
developments, random measures have been combined with hidden Markov
time series models, either for allowing the size of the latent space to evolve in
time using transitions 
based on a hierarchy of Dirichlet processes, e.g. 
\cite{BGR02}, \cite{VSTG08},
\cite{SGGL09} and \cite{ZZZ14}, or for building flexible emission
distributions that link the latent states to data, e.g.
\cite{YPRH11}, \cite{GR16}.

From a probabilistic perspective, there is a fairly canonical way to build stationary processes with
marginal distributions specified as random measures using stochastic
differential equations. This more principled approach to building time
series with given marginals has been well explored, both
probabilistically and statistically, for finite-dimensional marginal
distributions, either using processes with discontinuous sample paths,
as in \cite{BNS} or \cite{G11}, or using diffusions, as we undertake here. The
relevance of measure-valued diffusions in Bayesian nonparametrics has been pioneered in \cite{Wea07}, whose
construction naturally allows for separate control of the marginal
distributions and the memory of the process. 

The statistical models we
investigate in this article, introduced in Section \ref{sec:hmmeas}, can be seen as instances of what we call
\emph{hidden Markov measures}, since the models are formulated as
hidden Markov models where the latent, unobserved signal is a measure-valued
infinite-dimensional Markov process.  The signal in the first model is
the  Fleming--Viot (FV) process, denoted 
$\{X_{t},t\ge0\}$ on some state space $\Y$ (also called type space in population genetics), which admits the law of a
Dirichlet process on $\Y$ as marginal distribution. At times $t_n$, conditionally on  $X_{t_n}=x$, observations are drawn independently from $x$, i.e.,
\begin{equation}\label{DP data}
Y_{t_{n},i}\mid x\overset{iid}{\sim}x, \quad i=1,\ldots,m_{t_{n}},\quad
Y_{t_{n},i} \in \Y\,.
\end{equation}
Hence, this statistical model is a dynamic extension of the classic
Bayesian nonparametric model for unknown distributions of  \cite{F73}
and \cite{A74}. The signal in the second model is  
the Dawson--Watanabe (DW) process, denoted $\{Z_{t},t\ge0\}$ and also defined on $\Y$, that admits the law of a gamma random measure as marginal distribution. At times $t_n$,
conditionally on $Z_{t_n}=z$, the observations are a Poisson process $Y_{t_{n}}$
on $\Y$ with random 
intensity $z$, i.e., for any collection of disjoint
sets $A_1,\ldots,A_K\in\Y$ and  $K\in\N$, 
\[
Y_{t_{n}}(A_i) | z \overset{ind}{\sim}\text{Po}(z(A_{i})). 
\] Hence, this is a time-evolving Cox
process and can be seen as a dynamic extension of the classic
Bayesian nonparametric model for Poisson point processes of
\cite{L82}. 

The Dirichlet and the gamma random measures, used as Bayesian nonparametric
priors, have conjugacy properties to observation models of the type
described above, which have been exploited both for developing theory
and for building simulation algorithms for posterior and
predictive inference. These
properties, reviewed in Sections \ref{subsec: FV-static} and \ref{subsec: DW-static}, have propelled the use of these models into mainstream
statistics, and have been used directly in simpler models or to build
updating equations within
Markov chain Monte Carlo and variational Bayes computational
algorithms in hierarchical models. 

In this article, for the first time, we show that the
dynamic versions of these Dirichlet and gamma models also enjoy certain conjugacy properties. First, we formulate such models as hidden Markov models
where the latent signal is a measure-valued diffusion and the
observations arise at discrete times according to the mechanisms
described above. We then obtain that the filtering distributions, that
is the laws of the signal at each observation time conditionally on all
data up to that time, are finite mixtures of Dirichlet and gamma random
measures respectively. We provide a concrete posterior characterisation of these marginal distributions and explicit 
algorithms for the recursive computation of the parameters of these
mixtures. Our results show that these families of finite mixtures are closed with respect to the Bayesian learning in this dynamic framework, and thus provide an extension of the classic
posterior characterisations of \cite{A74} and \cite{L82} to time-evolving settings. 

The techniques we use to establish the new conjugacy results are
detailed in Section \ref{sec: computable filtering}, and build upon three aspects: the characterisations of Dirichlet and gamma random measures through their projections; certain results on measure-valued diffusions related to their time-reversal; and some very recent developments in \cite{PR14} that relate optimal filtering for
finite-dimensional hidden Markov models with the notion of duality for
Markov processes, reviewed in Section \ref{sec: filtering and
  duality}. 
  \begin{figure}[t!]
\begin{center}
\begin{tikzpicture}[>=triangle 45,scale=.6]
\node (fv1) at (-2,3) [rectangle,draw,line width=.5pt]
	{$\L(X_{t_{n}}\mid Y_{1:n})$};
\node (fv2) at (12,3) [rectangle,draw,line width=.5pt]
	{$\L(X_{t_{n+k}}\mid Y_{1:n})$};
\node (wf1) at (-2,0) [rectangle,draw,line width=.5pt]
	{$\L(X_{t_{n}}(A_{1},\ldots,A_{K})\mid Y_{1:n})$};
\node (wf2) at (12,0) [rectangle,draw,line width=.5pt]
	{$\L(X_{t_{n+k}}(A_{1},\ldots,A_{K})\mid Y_{1:n})$};
\draw[->] (fv1) -- (fv2);
\node at (4.8,3.3) {\footnotesize\text{time propagation}};
\draw[->,dashed] (fv1) -- (wf1);
\node at (-0.7,1.5) {\footnotesize\text{projection}};
\draw[->] (wf1) -- (wf2);
\draw[<-,dashed] (fv2) -- (wf2);
\node at (10,1.5) {\footnotesize\text{characterisation}};
\node at (4.8,0.3) {\footnotesize\text{time propagation}};
\end{tikzpicture}
\caption{\scriptsize  Scheme of the general argument for obtaining
  the filtering distribution of hidden Markov models with
  FV and DW signals, proved in Theorems \ref{thm: FV propagation} and
  \ref{thm: DW propagation}. In this figure $X_t$ is the latent
  measure-valued signal. Given data $Y_{1:n}$, the future distribution of the signal $\L(X_{t_{n+k}}\mid Y_{1:n})$ at time $t_{n+k}$ is determined by taking its finite-dimensional projection
$\L(X_{t_{n}}(A_{1},\ldots,A_{K})\mid Y_{1:n})$ onto an arbitrary partition $(A_{1},\ldots,A_{K})$, evaluating the relative propagation $\L(X_{t_{n+k}}(A_{1},\ldots,A_{K})\mid Y_{1:n})$ at time $t_{n+k}$, and by exploiting the projective characterisation of the filtering distributions.}\label{fig: scheme}
\end{center}
\end{figure}
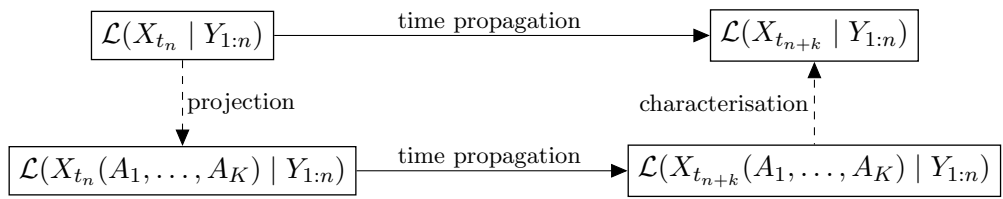
Figure \ref{fig: scheme} schematises, from a high level perspective, the strategy for obtaining our results. In a nutshell, the essence of our
theoretical results is that the operations of projection and
propagation of measures commute. More specifically, we first exploit the
characterisation of the Dirichlet and gamma random measures via their
finite-dimensional distributions, which are Dirichlet and independent
gamma distributions respectively. Then we exploit the fact that the 
dynamics of these finite-dimensional distributions induced by the
measure-valued signals are the Wright--Fisher (WF) diffusion and a multivariate Cox--Ingersoll--Ross (CIR) diffusion. Then, we extend the results in \cite{PR14}  to show that filtering these finite-dimensional signals on the basis of observations generated as described above results in
mixtures of Dirichlet and independent gamma distributions. Finally, we
use again the characterisations of Dirichlet and gamma measures via their finite-dimensional distributions to obtain the main results in this
paper, that the filtering distributions in the Fleming--Viot model
evolves in the family of finite mixtures of Dirichlet processes and those in the Dawson--Watanabe model in the family of finite mixtures of gamma random measures, under the observation models considered. The validity of this argument is formally proved in Theorems \ref{thm: FV
  propagation} and \ref{thm: DW propagation}. 
The resulting recursive procedures for  Fleming--Viot and
Dawson--Watanabe signals that describe how to compute the parameters
of the mixtures at each observation time  are given in Propositions \ref{prop: algorithm FV} and \ref{prop: algorithm DW},  and the associated pseudo codes are outlined in Algorithms \ref{alg: FV} and \ref{alg: DW}. 

The  paper is organised as follows. 
Section \ref{intro: hmms} briefly introduces some basic concepts on hidden Markov models. Section \ref{sec: illustration} provides a simple illustration of the underlying structures implied by previous results on filtering one-dimensional WF and CIR processes. These will be the reference examples throughout the paper and provide relevant intuition on our main results in terms of special cases, since the WF and CIR model are the one-dimensional projections of the infinite-dimensional families we consider here. 
Section \ref{sec:hmmeas} describes the two families of dependent  random measures which are the object of this contribution, the Fleming--Viot and the Dawson--Watanabe diffusions, from a non technical viewpoint. Connections of the dynamic models with the their marginal or static sub-cases given by Dirichlet and gamma random measures, well known in Bayesian nonparametrics, are emphasised. 
Section \ref{sec: main results} exposes and discusses the main results on the conjugacy properties of the two above families, given observation models as described earlier, together with the implied algorithms for recursive computation. All the technical details related to the strategy for proving the main results and to the duality structures associated to the signals are deferred to Section \ref{sec: computable filtering}.


\subsection{Hidden Markov models}\label{intro: hmms}

Since our time-dependent Bayesian nonparametric
models are formulated as hidden Markov models, we introduce here some basic related notions. 
A hidden Markov model (HMM) is a double sequence
$\{(X_{t_{n}},Y_{n}),n\ge0\}$ where $X_{t_{n}}$ is an unobserved Markov chain, called \emph{latent signal}, and
$Y_n:=Y_{t_{n}}$ are conditionally independent observations given the signal. Figure \ref{fig: HMM} provides a graphical representation of an HMM. We will assume here that the signal is the discrete time sampling of a continuous time Markov process $X_{t}$ with  transition kernel  $P_t(x,\d x')$. The  signal 
parametrises the law of the observations $\L(Y_n| X_{t_{n}})$,
called \emph{emission distribution}.  When this law admits density, this will be denoted by $f_{x}(y)$. 
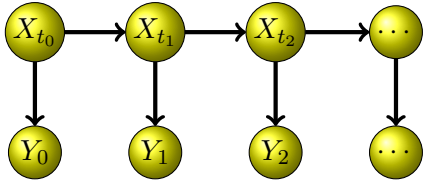
\begin{figure}
\begin{center}
\begin{tikzpicture}[scale=.4]
 \SetGraphUnit{2}
 \GraphInit[vstyle=Shade]
 \SetGraphShadeColor{yellow}{black}{yellow}
 \tikzset{LabelStyle/.style= {draw,
                              fill  = yellow,
                              text  = black}}
\tikzset{VertexStyle/.style = {shape = circle,
                                shading = ball,
                                ball color = yellow,
                                minimum size = 20pt,
                                inner sep = 1pt,
                                draw,
			       color=black}}
\SetGraphUnit{5}
\Vertex[L=\text{$X_{t_{0}}$},x=4*0,y=2*0]{1}
\Vertex[L=\text{$X_{t_{1}}$},x=4*1,y=2*0]{2}
\Vertex[L=\text{$X_{t_{2}}$},x=4*2,y=2*0]{3}
\Vertex[L=\text{$\cdots$},x=4*3,y=2*0]{4}
\Vertex[L={$Y_{0}$},x=4*0,y=-2*2]{5}
\Vertex[L={$Y_{1}$ },x=4*1,y=-2*2]{6}
\Vertex[L={$Y_{2}$},x=4*2,y=-2*2]{7}
\Vertex[L={$\cdots$ },x=4*3,y=-2*2]{8}
\tikzset{EdgeStyle/.style = {->, ultra thick}}
\Edge(1)(2)
\Edge(2)(3)
\Edge(3)(4)
\Edge(1)(5)
\Edge(2)(6)
\Edge(3)(7)
\Edge(4)(8)
\tikzset{VertexStyle/.style = {}}
 \end{tikzpicture}
\begin{minipage}{.75\textwidth}
\caption{\scriptsize  Hidden Markov model represented as a graphical model.}\label{fig: HMM}
\end{minipage}
\end{center}
\end{figure}

Filtering optimally an HMM requires the sequential exact evaluation of
the so-called \emph{filtering distributions} $\L(X_{t_{n}} | Y_{0:n})$, i.e., the laws of the signal at different times given past and present observations. Denote
$\nu_n:=\L(X_{t_n} | Y_{0:n})$ and let $\nu$ be
the prior distribution for $X_{t_0}$.
The exact or optimal filter is the solution of the recursion
\begin{equation}\label{recursion}
\nu_0 = \phi_{Y_{t_{0}}} (\nu)\,,\quad \nu_{n} = \phi_{Y_{t_{n}}}(\psi_{ t_n - t_{n-1}}(\nu_{n-1})),\quad\,n\in\N.
\end{equation}
This involves the following two operators acting on measures: the \emph{update operator}, which in case a density exists takes the form
\begin{equation}\label{update operator}
\phi_{y}(\nu)(\d x)
=\frac{f_x(y) \nu(\d x)}{p_\nu(y)}, \quad \quad p_\nu(y) = \int_{\X} f_x(y)
\nu(\d x)\,,
\end{equation}
and the \emph{prediction operator}
\begin{equation}\label{prediction operator}
\psi_t(\nu)(\d x')
=\int_{\X}\nu(\d x)P_t(x,\d x').
\end{equation}
The update operation amounts to an application of Bayes' Theorem to the currently available distribution conditional on the incoming data. The prediction operator propagates forward the current law of the signal of time $t$ according to the transition kernel of the underlying continuous-time latent process. The above recursion \eqref{recursion} then alternates update given the incoming data and prediction of the latent signal as follows: 
\begin{equation*}
\L(X_{t_{0}})
\overset{\text{update}}{\longrightarrow }
\L(X_{t_{0}}\mid Y_{0})
\overset{\text{prediction}}{\longrightarrow }
\L(X_{t_{1}}\mid Y_{0})
\overset{\text{update}}{\longrightarrow }
\L(X_{t_{1}}\mid Y_{0},Y_{1})
\overset{\text{prediction}}{\longrightarrow }\cdots
\end{equation*} 
If $X_{t_{0}}$ has prior $\nu=\L(X_{t_{0}})$, then $\nu_{0}=\L(X_{t_{0}} | Y_{0})$ is the posterior conditional on the data observed at time $t_{0}$; $\nu_{1}$ is the law of the signal at time $t_{1}$ obtained by propagating $\nu_{0}$ of a $t_{1}-t_{0}$ interval and conditioning on the data $Y_{0},Y_{1}$ observed at time $t_{0}$ and $t_{1}$; and so on.

\subsection{Illustration for CIR and WF signals}\label{sec: illustration}

In order to appreciate the ideas behind the main theoretical results
and the Algorithms we develop in this article, we provide some intuition on the 
corresponding results for one-dimensional hidden Markov models based on Cox--Ingersoll--Ross (CIR) and Wright--Fisher (WF) 
signals. These are the one-dimensional projections of the DW and FV
processes respectively, so informally we could say that a CIR process stands to a DW process as a gamma distribution stands to a gamma random measure, and a one-dimensional WF stands to a FV process as a Beta distribution stands to a Dirichlet process. The results illustrated in this section follow from \cite{PR14} and are based on the interplay between computable filtering and duality of Markov processes, summarised later in Section \ref{sec: filtering and
  duality}. The developments in this article rely on these
results, which are extended to the infinite-dimensional case.  
Here we highlight the mechanisms underlying the
explicit filters with the aid of figures, and postpone the mathematical details to Section \ref{sec: computable filtering}.

First, let the signal be a one-dimensional Wright--Fisher diffusion on
[0,1], with stationary distribution $\pi=\text{Beta}(\alpha,\beta)$ (see Section \ref{subsec: FV-model}) which is also taken as the prior $\nu$ for the signal at time 0. The signal can be interpreted as the evolving frequency of type-1 individuals in a
population of two types whose individuals generate offspring of the parent type which may be subject to mutation. 
The observations are assumed to be Bernoulli with
success probability given by the signal. Upon observation of $\yy_{t_{0}}=(y_{t_{0},1},\ldots,y_{t_{0},m})$, assuming it gives $m_1$ type-1 and $m_2$ type-2 individuals with $m=m_{1}+m_{2}$, the prior $\nu=\pi$ is updated  as usual via
Bayes' theorem to $\nu_{0}=\phi_{\yy_{t_{0}}}(\nu)=\text{Beta}(\alpha+m_{1},\beta+m_{2})$. Here $\phi_{\yy}$ is the update operator \eqref{update operator}. A forward propagation of these distribution of time $t$ by means of the prediction operator \eqref{prediction operator} yields the finite mixture of Beta distributions
\begin{equation*}
\begin{aligned}&\,
\psi_{t}(\nu_{0})=\sum_{(0,0)\le (i,j)\le (m_{1},m_{2})}p_{(m_{1},m_{2}),(i,j)}(t)\text{Beta}(\alpha+i,\beta+j),
\end{aligned}
\end{equation*} 
whose mixing weights depend on $t$ (see Lemma \ref{lemma: death transitions probabilities} below for their precise definition). The propagation of $\text{Beta}(\alpha+m_{1},\beta+m_{2})$ at time $t_{0}+t$ thus yields a mixture of Beta's with 
$(m_{1}+1)(m_{2}+1)$ components. The Beta parameters
range from $i=m_1,j=m_2$, which represent the full information provided by the collected data, to $i=j=0$,  which represent the null information on the data so that the associated component coincides with the prior. It is useful to identify the indices of the mixture with the nodes of a graph, as in  Figure \ref{fig: graph 1}-(b), where the red node represent the component with full information, and the yellow nodes the other components, including the prior identified by the origin. 
\begin{figure}[t!]
\begin{center}
\subfigure[]{
\raisebox{17.5mm}{\begin{tikzpicture}[scale=.35]
 \GraphInit[vstyle=Shade]
 \SetGraphShadeColor{yellow}{black}{yellow}
 \tikzset{LabelStyle/.style= {draw,
                              fill  = yellow,
                              text  = black}}
\SetGraphUnit{3.2}
\tikzset{VertexStyle/.style = {shape = circle,
                                shading = ball,
                                ball color =yellow,
                                minimum size = 18pt,
                                inner sep = 1pt,
                                draw,
			       color=black}}			       
\Vertex[L=\text{$0$}]{0}
\EA[L=\text{$1$}](0){1}
\EA[L=\text{$2$}](1){2}
\tikzset{VertexStyle/.style = {shape = circle,
                                shading = ball,
                                ball color = red,
                                minimum size = 18pt,
                                inner sep = 1pt,
                                draw,
			       color=black}}
\EA[L=\text{$3$}](2){3}
\tikzset{VertexStyle/.style = {shape = circle,
                                shading = ball,
                                ball color = white,
                                minimum size = 18pt,
                                inner sep = 1pt,
                                draw,
			       color=black}}
\EA[L=\text{$4$}](3){4}
\tikzset{EdgeStyle/.style = {->, ultra thick}}
\Edge(1)(0)
\Edge(2)(1)
\Edge(3)(2)
\Edge(4)(3)
\tikzset{VertexStyle/.style = {}}
 \end{tikzpicture}}}
 \hspace{4mm}
 \subfigure[]{
  \hspace*{-3mm}
  \begin{tikzpicture}[scale=.35]
 \GraphInit[vstyle=Shade]
 \SetGraphShadeColor{yellow}{black}{yellow}
 \tikzset{LabelStyle/.style= {draw,
                              fill  = yellow,
                              text  = black}}
\SetGraphUnit{2.5}
\tikzset{VertexStyle/.style = {shape = circle,
                                shading = ball,
                                ball color =red,
                                minimum size = 18pt,
                                inner sep = 1pt,
                                draw,
			       color=black}}			       
\Vertex[L=\text{$2,1$}]{21}
\tikzset{VertexStyle/.style = {shape = circle,
                                shading = ball,
                                ball color = yellow,
                                minimum size = 18pt,
                                inner sep = 1pt,
                                draw,
			       color=black}}
\SOWE[L=\text{$2,0$}](21){20}
\SOEA[L=\text{$1,1$}](21){11}
\SOEA[L=\text{$0,1$}](11){01}
\SOEA[L=\text{$1,0$}](20){10}
\SOWE[L=\text{$0,0$}](01){00}
\tikzset{VertexStyle/.style = {shape = circle,
                                shading = ball,
                                ball color = white,
                                minimum size = 18pt,
                                inner sep = 1pt,
                                draw,
			       color=black}}
\NOEA[L=\text{$2,2$}](21){22}
\NOEA[L=\text{$1,2$}](11){12}
\NOEA[L=\text{$0,2$}](01){02}
\tikzset{EdgeStyle/.style = {->, ultra thick}}
\Edge(22)(21)
\Edge(22)(12)
\Edge(21)(20)
\Edge(21)(11)
\Edge(12)(11)
\Edge(12)(02)
\Edge(20)(10)
\Edge(11)(10)
\Edge(11)(01)
\Edge(10)(00)
\Edge(01)(00)
\Edge(02)(01)
\tikzset{VertexStyle/.style = {}}
 \end{tikzpicture}}
\linebreak
\caption{\scriptsize  The death process on the lattice modulates the
  evolution of the mixture weights in the filtering distributions of
  models with CIR (left) and WF (right) signals. Nodes on the graph identify mixture components in the filtering distribution. The mixture weights are assigned according to the probability that the death process  starting from the (red) node which encodes the current full information (here $y=3$ for the CIR and $(m_{1},m_{2})=(2,1)$ for the WF) is in a lower node after time $t$.}\label{fig: graph 1}
\end{center}
\end{figure}
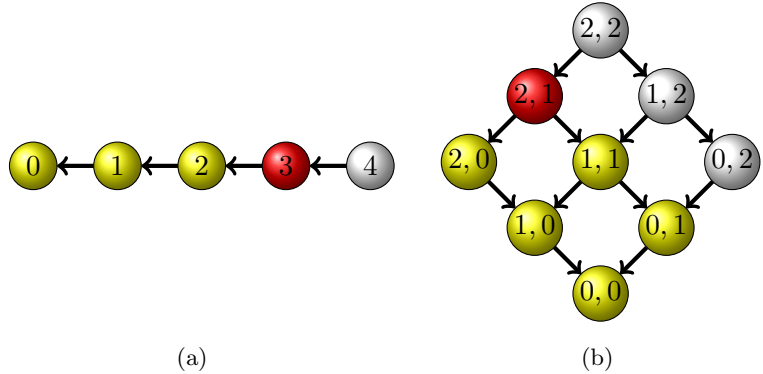
The time-varying mixing weights are the transition probabilities of an associated (dual) 2-dimensional death
process, which can be thought of as jumping to lower nodes in  the graph of Figure \ref{fig: graph 1}-(b) at a specified rate in continuous time. The effect on the mixture of these weights is that as time increases, the probability mass is shifted from components with
parameters close to the full information $(\alpha+m_{1},\beta+m_{2})$,
to components which bear less to none information on the data. The mass shift reflects the progressive obsolescence of the data collected at $t_{0}$ as evaluated by signal law at time $t_{0}+t$ as t increases. 

Note that it is not obvious that \eqref{prediction operator} yields a finite mixture when $P_{t}$ is the transition operator of a WF process, since $P_{t}$ has an infinite series expansion (see Section \ref{subsec: FV-model}). This has been proved rather directly in \cite{CG09} or by combining results on optimal filtering with some duality properties of this model (see \cite{PR14} or Section \ref{sec: computable filtering} here).

Consider now the model where the signal is  a one-dimensional CIR diffusion on
$\R_{+}$, with gamma stationary distribution (and prior at $t_{0}=0$) given by  $\pi=\text{Ga}(\alpha,\beta)$ (see Section \ref{subsec: DW-model}).
The observations are Poisson with intensity given by the current state of the signal. Before data are collected, the forward propagation of the signal distribution to time $t_{1}$ yields the same distribution by stationarity. Upon observation, at time $t_{1}$, of $m \geq 1$ Poisson data points with total
count $y$, the prior $\nu=\pi$ is  updated via Bayes' theorem
to
\begin{equation}\label{posterior gamma}
\nu_{0}=\text{Ga}(\alpha+y,\beta+m)
\end{equation} 
 yielding a jump in the measure-valued process; see Figure \ref{fig: cir3d+mixture}-(a). A forward propagation of $\nu_{0}$ yields the finite mixture of gamma distributions
\begin{equation}\label{gamma mixture}
\psi_{t}(\nu_{0})=\sum_{0\le i\le y}p_{y,i}(t)\text{Ga}(\alpha+i,\beta+\SS_{t}).
\end{equation} 
whose mixing weights also depend on $t$ (see Lemma \ref{lemma: 1-CIR propagation} below for their precise definition). 
At time $t_{1}+t$, the filtering distribution is 
a $(y+1)$-components mixture with the first gamma parameter ranging
from  full ($i=y$) to null ($i=0$) information with respect to the collected data (Figure \ref{fig: cir3d+mixture}-(b)).  
The time-dependent mixture weights are the transition probabilities of a a certain associated (dual)  one-dimensional death
process, which can be thought of as jumping to lower nodes in the graph of Figure \ref{fig: graph 1}-(a) at a specified rate in continuous time.
\begin{figure}[h!]
\begin{center}
\subfigure[]{
\hspace{-5mm}
\includegraphics[width=.4\textwidth]{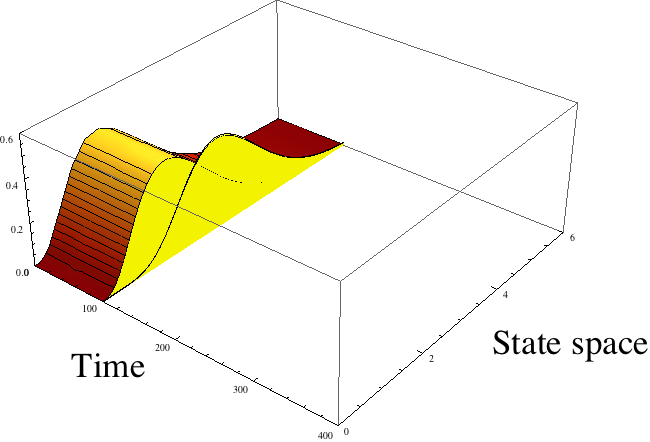}
\hspace{5mm}
\raisebox{7mm}{\includegraphics[width=.35\textwidth]{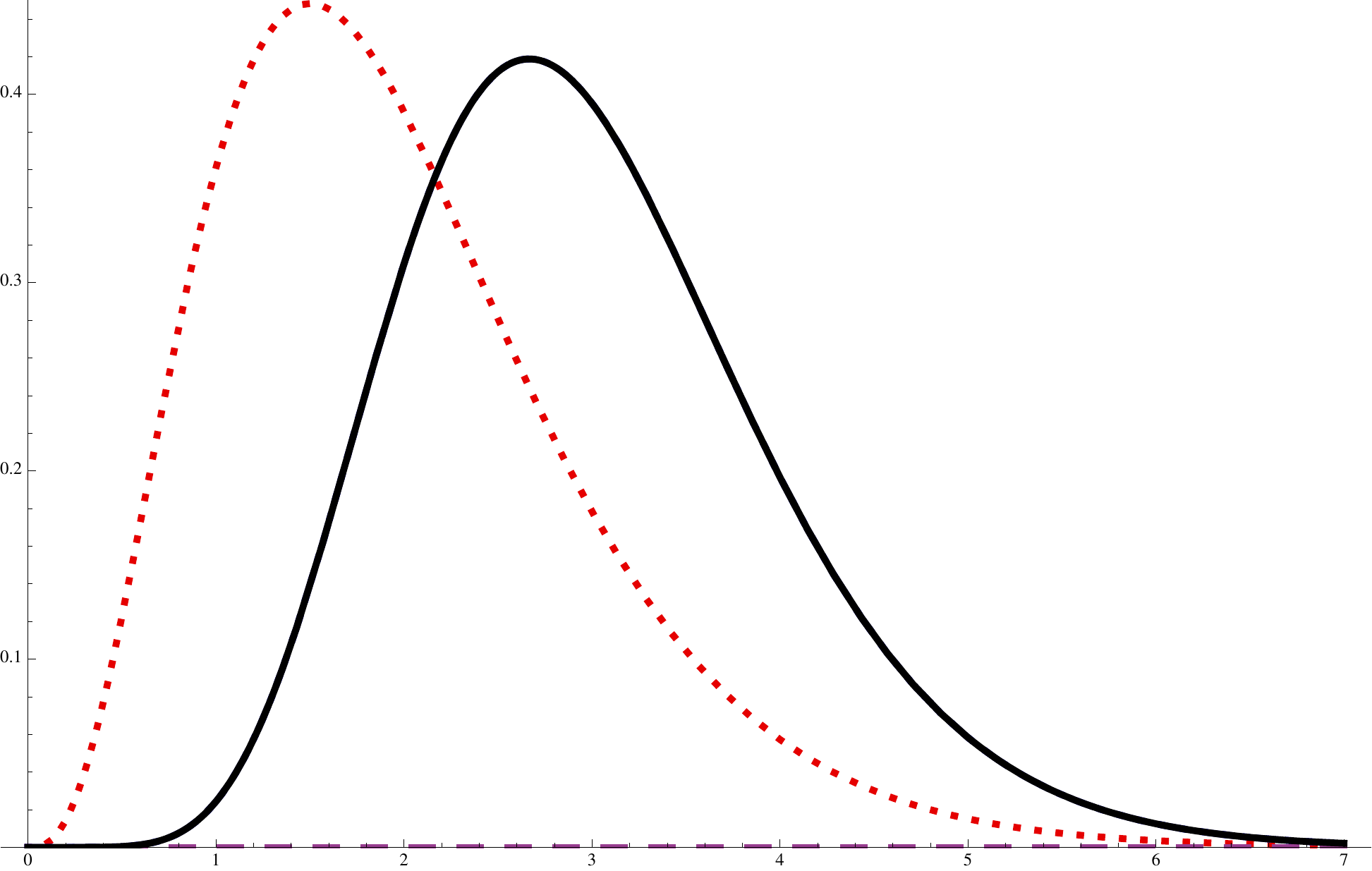}}
}\\\vspace{-3mm}
\subfigure[]{
\hspace{-5mm}
\includegraphics[width=.4\textwidth]{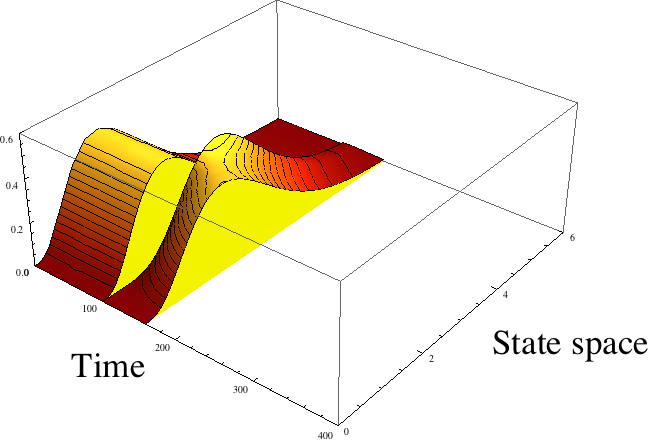}
\hspace{5mm}
\raisebox{7mm}{\includegraphics[width=.35\textwidth]{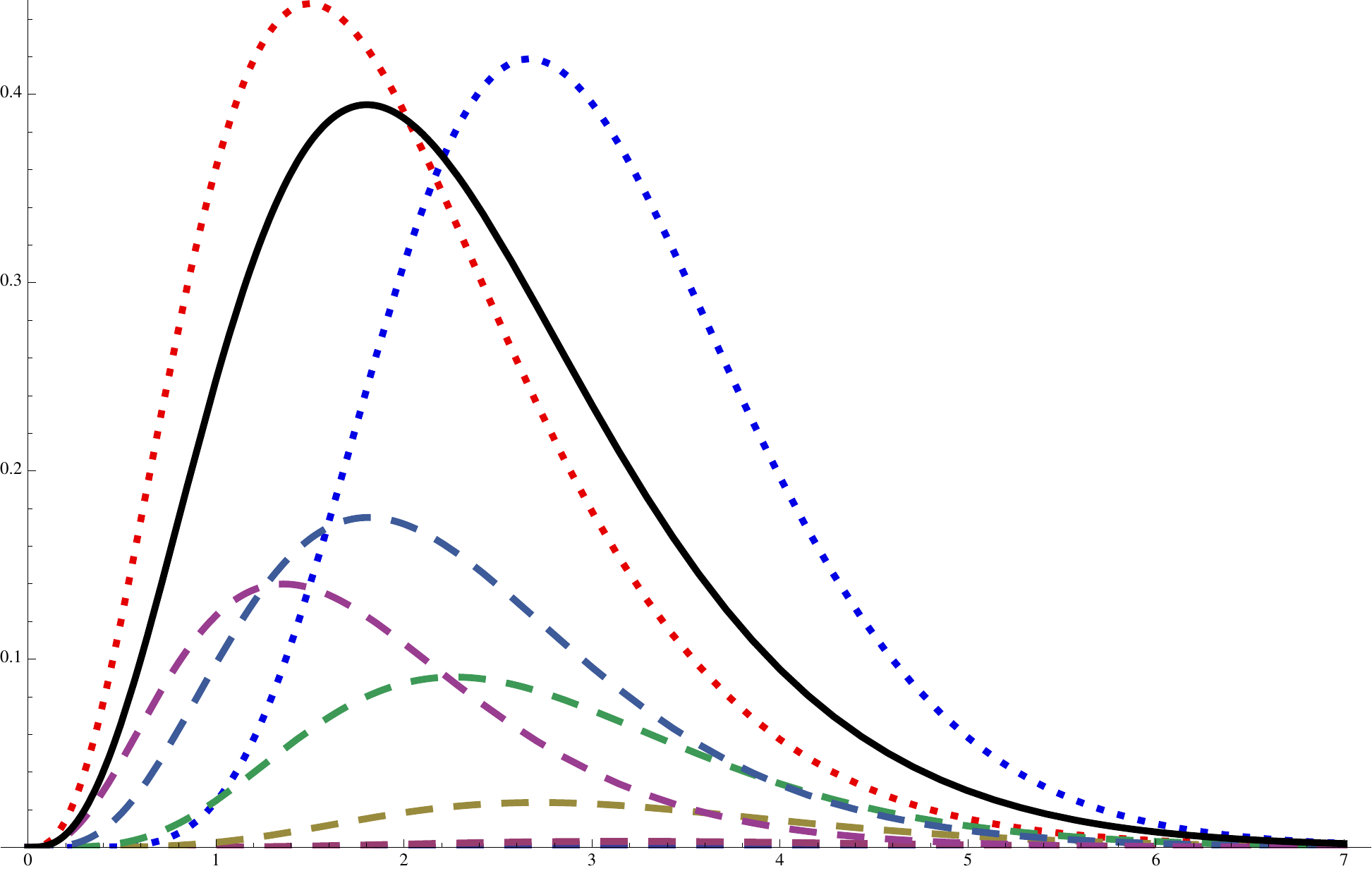}}
}\\\vspace{-3mm}
\subfigure[]{
\hspace{-5mm}
\includegraphics[width=.4\textwidth]{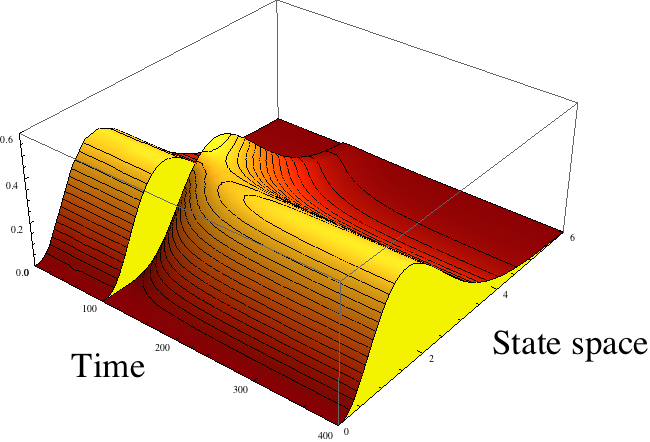}
\hspace{5mm}
\raisebox{7mm}{\includegraphics[width=.35\textwidth]{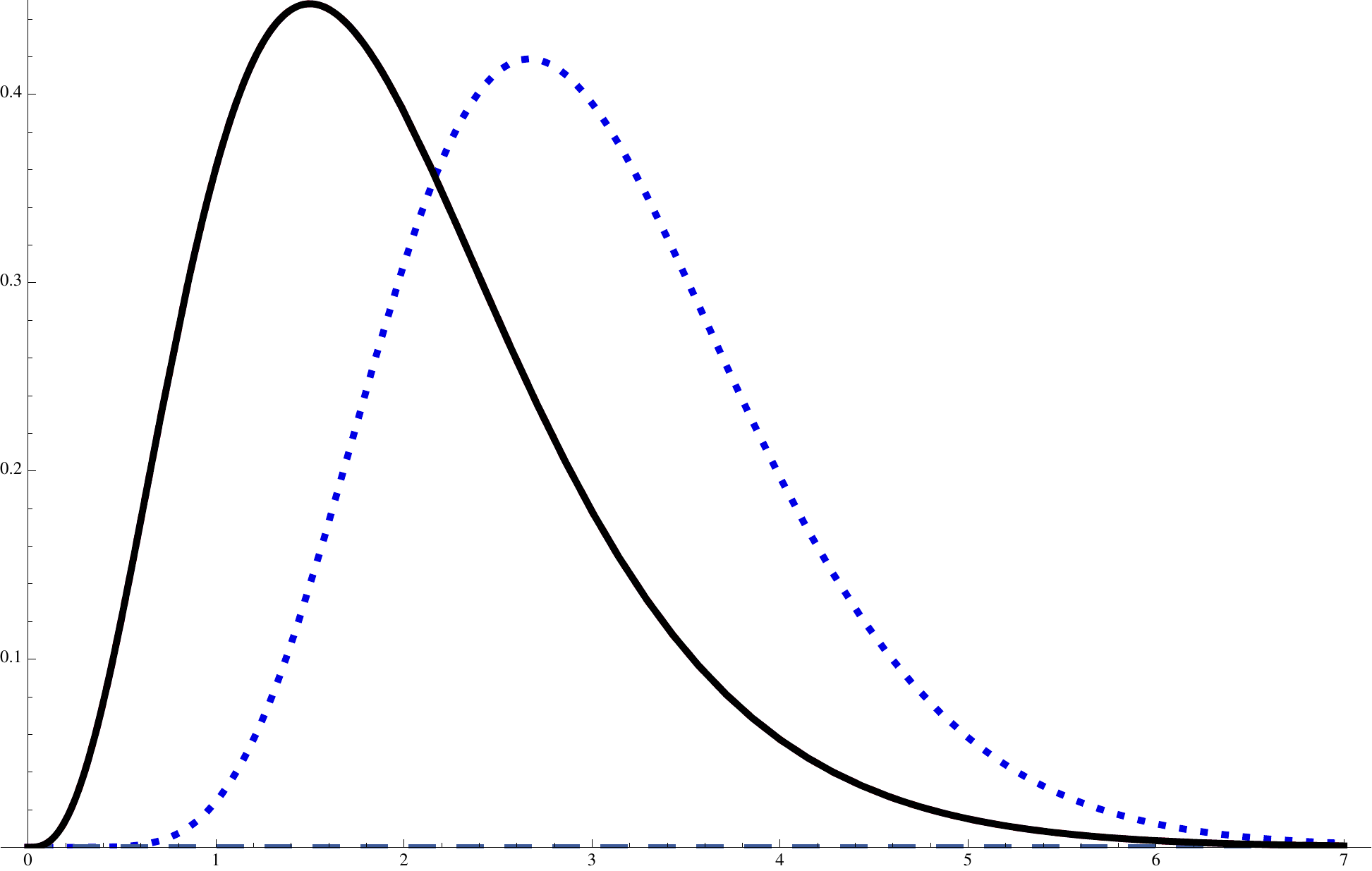}}
}
\end{center}
\vspace{-5mm}
\caption{\scriptsize 
Temporal evolution of the filtering distribution (solid black in right panels and marginal rightmost section of left panels) under the CIR model: 
(a) until the first data collection the propagation preserves the prior/stationary distribution (red dotted in right panels); at the first data collection, the prior is updated to the posterior (blue dotted in right panels) via Bayes' Theorem, determining a jump in the filtering process (left panel);
(b) the forward propagation of the filtering distribution behaves as a finite mixture of Gamma densities (weighted components dashed coloured in right panel);
(c) in absence of further data, the time-varying mixing weights shift mass towards the prior component, and the filtering distribution eventually returns to the stationary state.
}\label{fig: cir3d+mixture}
\end{figure}
Similarly to the WF model, the mixing weights shift mass from components whose first parameter is close to the full information, i.e.~$(\alpha+y,\beta+\SS_{t})$, to components which bear less to none
information $(\alpha,\beta+\SS_{t})$. The time evolution of the mixing weights is depicted in Figure \ref{fig: trans prob}, where the cyan and blue lines are the weights of the components with full and no information on the data respectively. 
\begin{figure}[t!]
\begin{center}
\includegraphics[width=.52\textwidth]{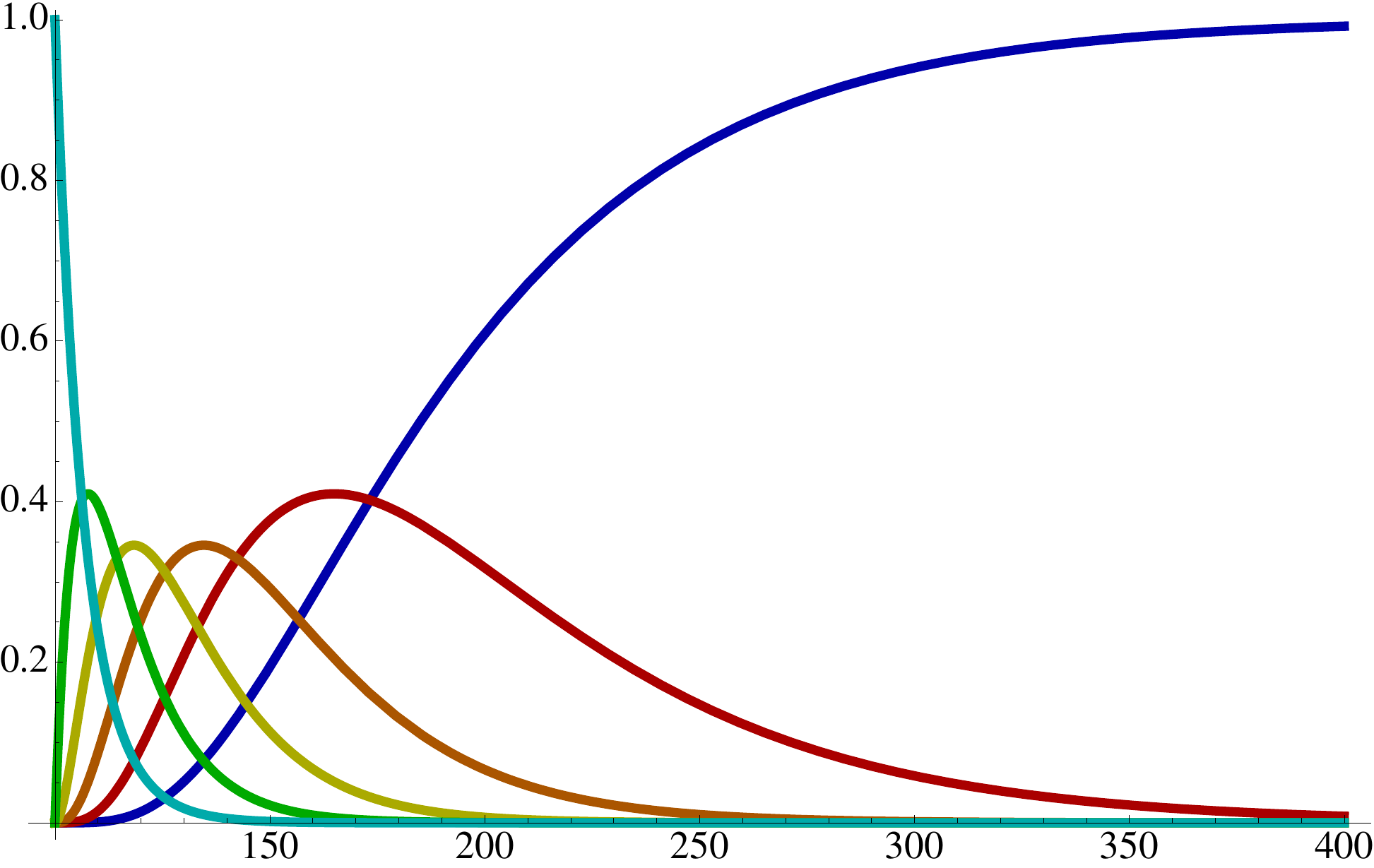}
\end{center}
\caption{\scriptsize 
Evolution of the mixture weights which drive the mixture distribution in Fig.~\ref{fig: cir3d+mixture}. At the jump time 100 (the origin here), the mixture component with full posterior information (blue dotted in Fig.~\ref{fig: cir3d+mixture}) has weight equal to 1 (cyan curve), and the other components have zero weight. As the filtering distribution is propagated forward, the weights evolve as transition probabilities of an associated death process. The mixture component equal to the prior distribution (red dotted in Fig.~\ref{fig: cir3d+mixture}), which carries no information on the data, has weight (blue curve) that is 0 at the jump time when the posterior update occurs, and eventually goes back to 1 in absence of further incoming observations, in turn determining the convergence of the mixture to the prior in Fig.~\ref{fig: cir3d+mixture}.
}\label{fig: trans prob}
\end{figure}
As a result of the impact of these weights on the mixture, the latter converges, in absence of further data, to the prior/stationary distribution $\pi$ as $t$ increases, as shown in Figure \ref{fig: cir3d+mixture}-(c). 
Unlike the WF case, in this model there is a second parameter controlled by a deterministic (dual) process $\SS_{t}$ on $\R_{+}$ which subordinates the transitions of the death process; see Lemma \ref{lemma: 1-CIR propagation}. Roughly speaking, the death process on the graph controls the obsolescence of the observation counts $y$, whereas the deterministic  process $\SS_{t}$ controls that of the sample size $m$. At the update time $t_{1}$ we have $\SS_{0}=m$ as in \eqref{posterior gamma}, but $\SS_{t}$ is a deterministic, continuous and decreasing process, and in absence of further data $\SS_{t}$ converges to 0 as $t\rightarrow \infty$, to restore the prior parameter $\beta$ in the limit of \eqref{gamma mixture}.
See Lemma \ref{lemma: 1-CIR propagation} in the Appendix for the formal result for the one-dimensional CIR diffusion.  

\begin{minipage}{\textwidth}
When more data samples are collected at different times, the update and propagation operations are alternated, resulting in jump processes for both the filtering distribution and the deterministic dual $\SS_{t}$ (Figure \ref{fig: Theta_t}). 
\end{minipage}

\begin{figure}[h!]
\begin{center}
\vspace{5mm}
\subfigure[]{\includegraphics[width=.4\textwidth]{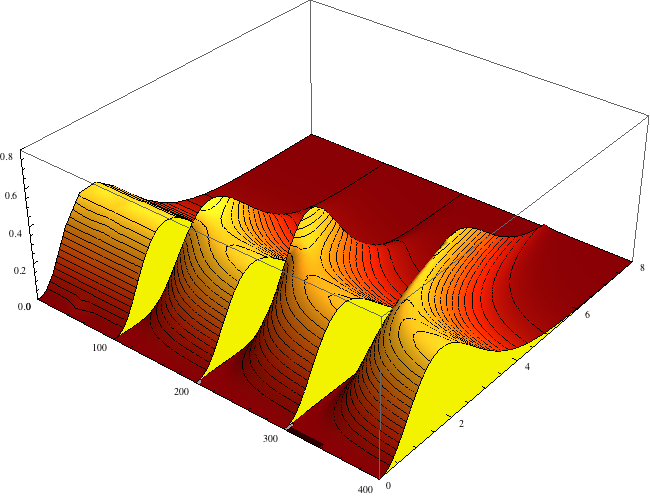}}
\subfigure[]{\raisebox{7mm}{\includegraphics[width=.35\textwidth]{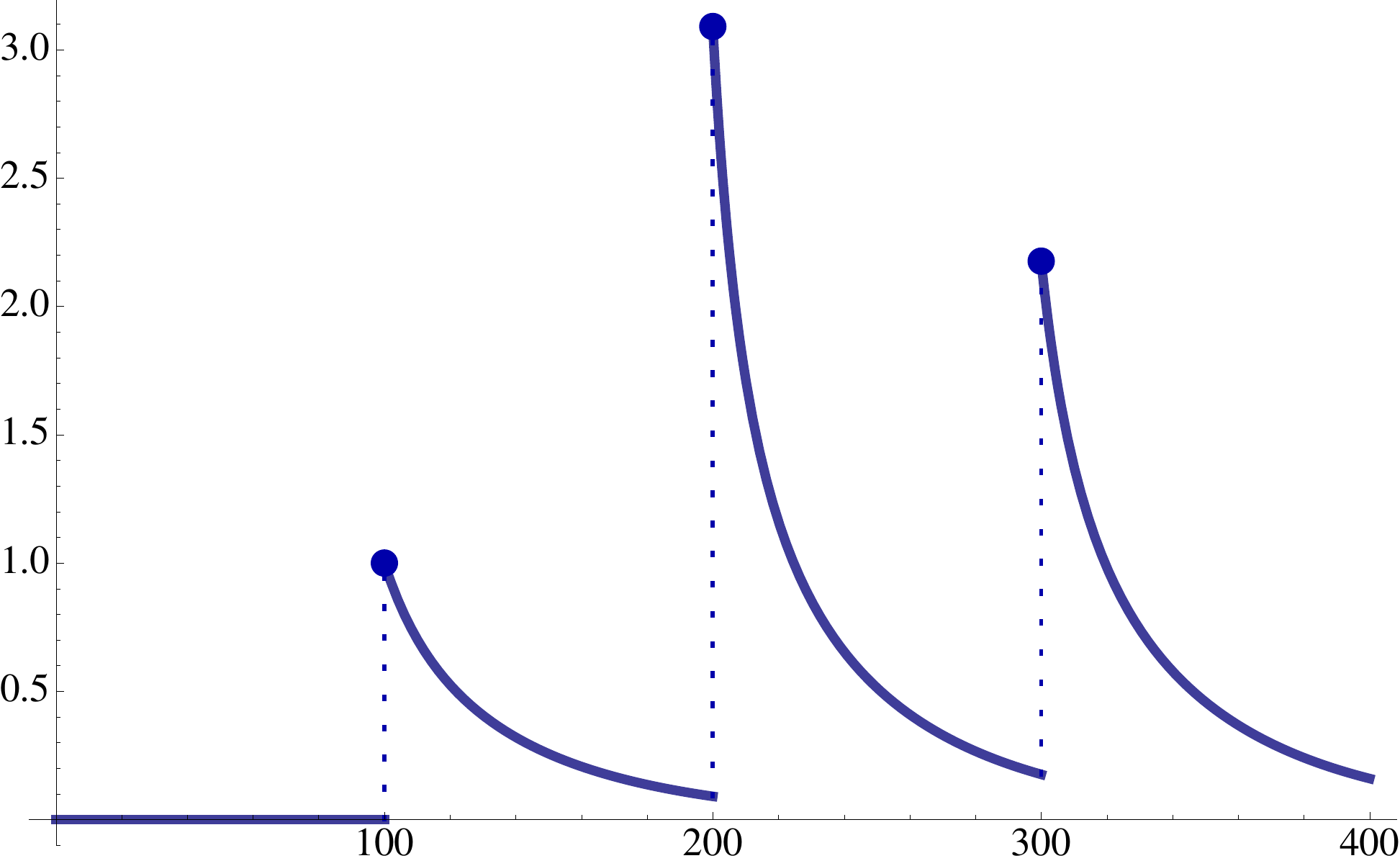}}}
\end{center}
\caption{\scriptsize 
Evolution of the filtering distribution (a) and of the deterministic component $\Theta_{t}$ of the dual process $(M_{t},\Theta_{t})$ that modulates the sample size parameter in the mixture components, in the case of multiple data collection at time $100,200,300$.
}\label{fig: Theta_t}
\end{figure}


\subsection{Preliminary notation}\label{sec: notation}

Although most of the notation is better introduced in the appropriate places, we collect here that which is used uniformly over the paper, to avoid recalling these objects several times throughout the text. 
In all subsequent sections, $\Y$ will denote a locally compact Polish space which represents the observations space, $\MM(\Y)$ is the associated space of finite Borel measures on $\Y$ and $\MM_{1}(\Y)$ its subspace of probability measures. A typical element $\alpha\in \MM(\Y)$ will be such that
\begin{equation}\label{alpha measure}
\alpha=\theta P_{0}, \quad
\theta>0,\quad
P_{0}\in\MM_{1}(\Y),
\end{equation}
where $\theta=\alpha(\Y)$ is the total mass of $\alpha$, and $P_{0}$ is sometimes called centering or baseline distribution. We will assume here that $P_{0}$ has no atoms. Furthermore, for $\alpha$ as above, $\Pi_{\alpha}$ will denote the law on $\MM_{1}(\Y)$ of a Dirichlet process, and $\Gamma^{\beta}_{\alpha}$ that on $\MM(\Y)$ of a gamma random measure, with $\beta>0$. These will be recalled formally in Sections \ref{subsec: FV-static} and \ref{subsec: DW-static}. 

We will denote by $X_{t}$ the Fleming--Viot process and by $Z_{t}$ the Dawson--Watanabe process, to be interpreted as $\{X_{t},t\ge0\}$ and $\{Z_{t},t\ge0\}$ when written without argument. Hence $X_{t}$ and $Z_{t}$ take values in the space of continuous functions from $[0,\infty)$ to $\MM_{1}(\Y)$ and $\MM(\Y)$ respectively, whereas discrete measures $x(\cdot)\in\MM_{1}(\Y)$ and $z(\cdot)\in\MM(\Y)$ will denote the marginal states of  $X_{t}$ and $Z_{t}$. We will write $X_{t}(A)$ and $Z_{t}(A)$ for their respective one dimensional projections onto the Borel set $A\subset\Y$. We adopt boldface notation to denote vectors, with the following conventions:
\begin{equation*}\label{vector notation}
\begin{split}
&\xx=(x_{1},\ldots,x_{K})\in\R_{+}^{K}, \quad
\mm=\,(m_{1},\ldots,m_{K})\in \Z^{K},\\
&\xx^{\mm}=x_{1}^{m_{1}}\cdots x_{K}^{m_{K}},\quad\quad \hspace{3mm}
|\xx|=\sum\nolimits_{i=1}^{K}x_{i},
\end{split}
\end{equation*}
where the dimension $2\le K\le \infty$ will be clear from the context unless specified. Accordingly, the Wright--Fisher model, closely related to projections of the Fleming--Viot process onto partitions, will be denoted $\XX_{t}$. 
We denote by $\oo$ the vector of zeros and by $\ee_{i}$ the vector whose only non zero entry is a 1 at the $i$th coordinate. Let also ``$<$'' define a partial ordering on $\Z^K$, so that $\mm<\nn$ if  $m_{j}\le n_{j}$ for all $j\ge1$ and $m_{j}< n_{j}$ for some $j\ge1$. Finally, we will use the compact notation $\yy_{1:m}$ for vectors of observations $y_{1},\ldots,y_{m}$.


\section{Hidden Markov measures}
\label{sec:hmmeas}

\subsection{Fleming--Viot signals}\label{sec: FV}

\subsubsection{The static model: Dirichlet processes and mixtures thereof}\label{subsec: FV-static}

The Dirichlet process on a state space $\Y$, introduced by \cite{F73}
(see \cite{G10} for a recent review), is a discrete random probability
measure $x\in\MM_{1}(\Y)$. 
The process admits the series representation
\begin{equation}\label{DP series representation}
x(\cdot)=\sum_{i=1}^{\infty}W_{i}\delta_{Y_{i}}(\cdot), \quad
W_{i}=\frac{Q_{i}}{\sum_{j\ge1}Q_{j}},\quad
Y_{i}\overset{iid}{\sim}P_{0},
\end{equation}
where $(Y_i)$ and $ (W_i)$ are independent and  $(Q_{i})$ are the jumps of a gamma process
with mean measure $\na y^{-1}e^{-y}\d y$. 
We will denote by $\Pi_{\alpha}$, the law of
$x(\cdot)$ in \eqref{DP series representation}, with $\alpha$ as in \eqref{alpha measure}.

Mixtures of Dirichlet processes were introduced in \cite{A74}. We say that $x$ is a mixture of Dirichlet processes if
\begin{equation*}
x\mid u\sim\Pi_{\alpha_{u}},\quad \quad
u\sim H,
\end{equation*}
where $\alpha_{u}$ denotes the measure $\alpha$ conditionally on $u$, or equivalently
\begin{equation}\label{MDP}
x\sim \int_{\mathcal{U}}\Pi_{\alpha_{u}}\d H(u).
\end{equation}
With a slight abuse of terminology we will also refer to the right hand side of the last expression as a mixture of Dirichlet processes.

The Dirichlet process and mixtures thereof have two fundamental properties that are of great interest in statistical learning \citep{A74}: 
\begin{itemize}
\item \emph{Conjugacy}:
let $x$ be as in \eqref{MDP}. Conditionally on $m$ observations $y_{i}\mid x\overset{iid}{\sim}x$, we have
\begin{equation*}
x\mid \yy_{1:m}
\sim \int_{\mathcal{U}}\Pi_{\alpha_{u}+\sum_{i=1}^{m}\delta_{y_{i}}}\d H_{\yy_{1:m}}(u),
\end{equation*}
where $H_{\yy_{1:m}}$ is the conditional distribution of $u$
given $\yy_{1:m}$. Hence a posterior mixture of Dirichlet processes is still a mixture of Dirichlet processes with updated parameters. 

\item \emph{Projection}: let $x$ be as in \eqref{MDP}. For any measurable partition $A_{1},\ldots,A_{K}$ of $\Y$, we have
  \begin{equation*}
(x(A_{1}),\ldots,x(A_{K}))
\sim
\int_{\mathcal{U}}\pi_{\aa_{u}}\d H(u),
\end{equation*}
where $\aa_{u}=(\alpha_{u}(A_{1}),\ldots,\alpha_{u}(A_{K}))$ and
$\pi_{\aa}$ denotes the Dirichlet distribution with parameter $\aa$.
\end{itemize}

Letting $H$ be concentrated on a single point of $\mathcal{U}$  recovers the respective properties of the Dirichlet
process as special case, i.e.~$x\sim \Pi_{\alpha}$ and $y_{i}|x\overset{iid}{\sim}x$ imply respectively that $x|\yy_{1:m}\sim \Pi_{\alpha+\sum_{i=1}^{m}\delta_{y_{i}}}$ and $(x(A_{1}),\ldots,x(A_{K}))\sim\pi_{\aa}$, where $\aa=(\alpha(A_{1}),\ldots,\alpha(A_{K}))$.

\subsubsection{The Fleming--Viot process}\label{subsec: FV-model}

Fleming--Viot (FV) processes are a large family of diffusions taking values in the subspace of $\MM_{1}(\Y)$ given by purely atomic probability measures. Hence they describe evolving discrete distributions whose support also varies with time and whose frequencies are each a diffusion on $[0,1]$. Two states apart in time of a FV process are depicted in Figure \ref{fig: FV}.  See \cite{EK93} and \cite{D93} for exhaustive reviews. Here we restrict the attention to a subclass known as the (labelled) \emph{infinitely many neutral alleles model} with parent independent mutation, henceforth for simplicity called the FV process, which has the law of a Dirichlet process as stationary measure \citep[Section 9.2]{EK93}.

\begin{figure}[t!]
\begin{center}
\subfigure[]{\includegraphics[width=.5\textwidth]{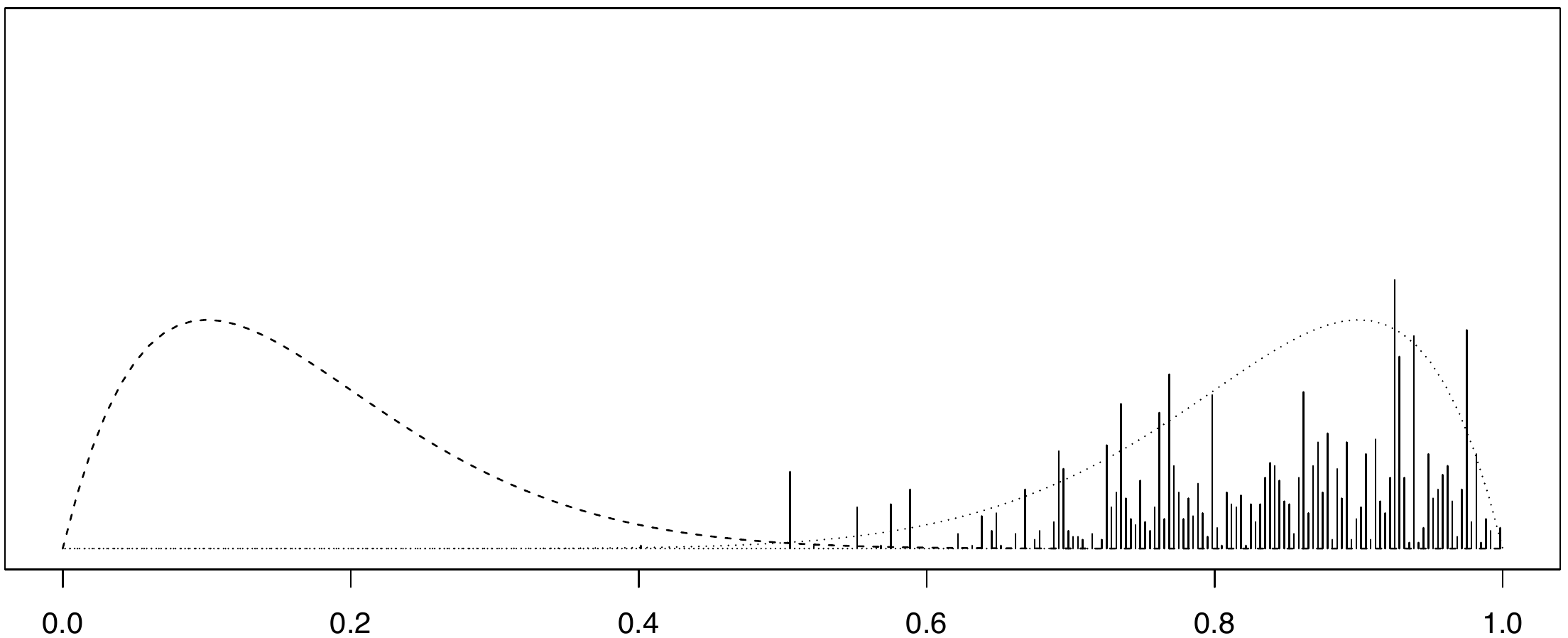}}\\
\subfigure[]{\includegraphics[width=.5\textwidth]{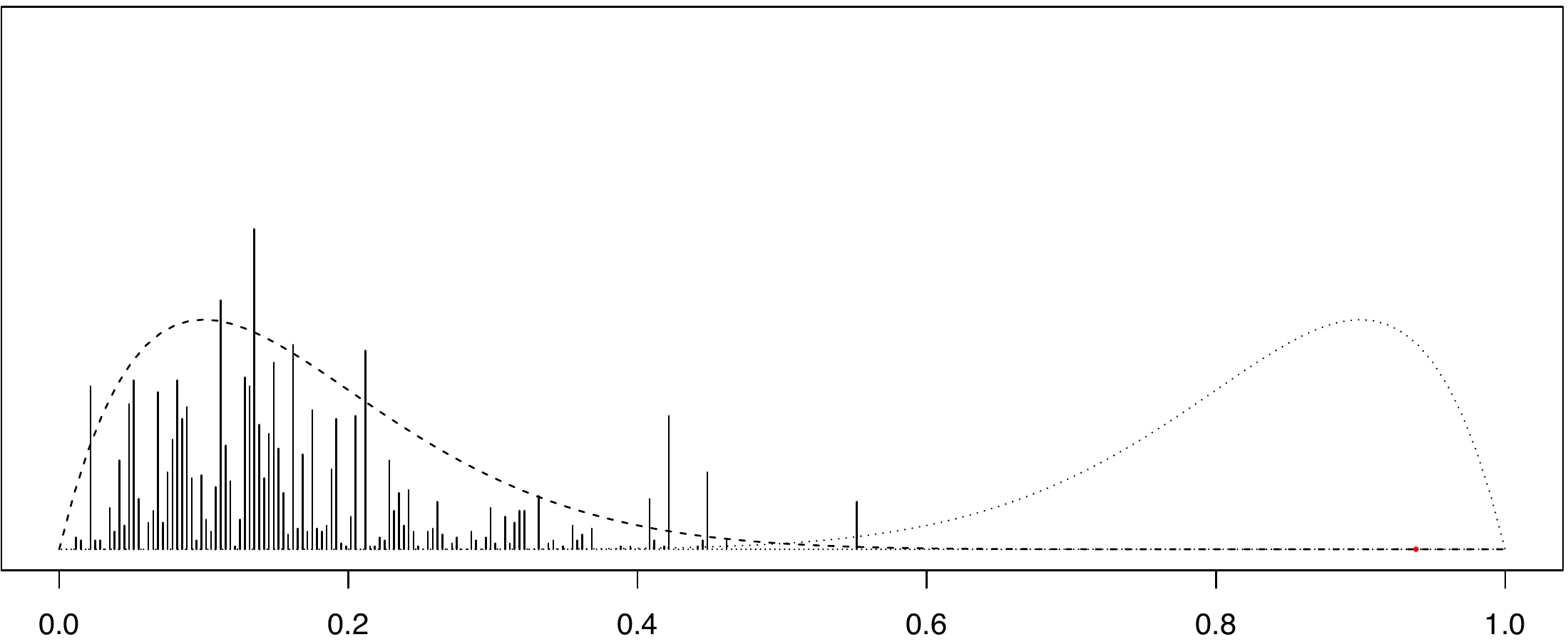}}
\end{center}
\begin{quote}
\caption{\scriptsize 
Two states of a FV process on $[0,1]$ at successive times (solid discrete measures): (a) the initial state has distribution $\Pi_{\alpha_{0}}$ with $\alpha_{0}=\theta  Beta(4,2)$ (dotted); (b) after some time, the process reaches the stationary state, which has distribution $\Pi_{\alpha}$ with $\alpha=\theta Beta(2,4)$ (dashed). 
}\label{fig: FV}
\end{quote}
\end{figure}

One of the most intuitive ways to understand a FV process is to consider its transition function, found in \cite{EG93}. This is given by 
\begin{equation}\label{FV trans-funct}
P_{t}(x,\d x')
=\sum_{m=0}^{\infty}d_{m}(t)\int_{\Y^m}
\Pi_{\alpha+\sum_{i=1}^{m}\delta_{y_{i}}}
(\d x')x^{m}(\d
y_1,\ldots,\d y_m)
\end{equation}
where $x^{m}$ denotes the $m$-fold product measure $x\times\dots\times x$ and $\Pi_{\alpha+\sum_{i=1}^{m}\delta_{y_{i}}}$ is a posterior Dirichlet process as defined in the previous section. The expression \eqref{FV trans-funct} has a nice interpretation from the Bayesian learning viewpoint. Given the current state of the process $x$, with probability $d_{m}(t)$ an $m$-sized sample from $x$ is taken, and the arrival state is sampled from the posterior law $\Pi_{\alpha+\sum_{i=1}^{m}\delta_{y_{i}}}$. Here $d_{m}(t)$ is the probability that an $\N$-valued death process which starts at infinity at time 0 is in $m$ at time $t$, if it jumps from $m$ to $m-1$ at rate $    \lambda_m=\frac{1}{2}m(\theta+m-1)$. See \cite{T84} for details. 
Hence a larger $t$ implies sampling a lower amount of information from $x$ with higher probability, resulting in fewer atoms shared by $x$ and $x'$. Hence the starting and arrival states have correlation which decreases in $t$ as controlled by $d_{m}(t)$. As $t\rightarrow 0$, infinitely many samples are drawn from $x$, and $x'$ will coincide with $x$, and the trajectories are continuous in total variation norm \citep{EK93}.  As $t\rightarrow \infty$, the fact that the death process which governs the probabilities $d_{m}(t)$ in \eqref{FV trans-funct} is eventually absorbed in 0 implies that $P_{t}(x, \d x')\rightarrow \Pi_{\alpha}$ as $t\rightarrow \infty$, so $x'$ is sampled from the prior $\Pi_{\alpha}$. Therefore this FV is stationary with respect to $\Pi_{\alpha}$ (in fact, it is also reversible). It follows that, using terms familiar to the Bayesian literature, under this parametrisation the FV can be considered as a dependent Dirichlet process with continuous sample paths. Constructions of Fleming--Viot and closely related processes using ideas from Bayesian nonparametrics have been proposed in \cite{Wea07,FRW09,RW09a,RW09b}. Different classes of diffusive  dependent Dirichlet processes or related are constructed in \cite{MR16,MRW11} based on the stick-breaking representation \citep{S94}.

Projecting a FV process $X_{t}$ onto a measurable partition $A_{1},\ldots,A_{K}$ of $\Y$ yields a $K$-dimensional Wright--Fisher (WF) diffusion $\XX_{t}$, which is reversible and stationary with respect to the Dirichlet distribution $\pi_{\aa}$, for $\alpha_{i}=\theta P_{0}(A_{i})$, $i=1,\ldots,K$. See \citep{D10,E09}.
This property is the dynamic counterpart of the projective property of Dirichlet processes discussed in Section \ref{subsec: FV-static}. Consistently, the transition function of a WF process is obtained as a specialisation of the FV case, yielding 
\begin{equation}\label{WF trans-funct}
P_{t}(\xx,\d \xx')
=\sum_{m=0}^{\infty}d_{m}(t)
\sum_{\mm\in\Z^{K}: \norm{\mm}=m}
\binom{m}{\mm}\xx^{\mm}
\pi_{\aa+\mm}(\d \xx')
\end{equation}
with analogous interpretation to \eqref{FV trans-funct}. See \cite{EG93}.

For statistical modelling it is useful to introduce a further
parameter $\sigma$ that controls the speed of the process. This can
be done by defining 
the time change $X_{\tau(t)}$ with
$\tau(t)=\sigma t$.  In such parameterisation,
$\sigma$ does not affect the stationary distribution of the process,
and can be used to model the dependence structure.


\subsection{Dawson--Watanabe signals}\label{sec: DW}

\subsubsection{The static model: gamma random measures and mixtures thereof}\label{subsec: DW-static}

Gamma random measures \citep{L82} can be thought of as the counterpart of Dirichlet processes in the context of finite intensity measures. A gamma random measure $z\in\MM(\Y)$ with shape parameter $\alpha$ as in \eqref{alpha measure} and rate parameter $\beta>0$, denoted $z\sim \Gamma^{\beta}_{\alpha}$, admits representation
\begin{equation}\label{gamma process series}
z(\cdot)=\beta^{-1}\sum_{i=1}^{\infty}Q_{i}\delta_{Y_{i}}(\cdot), \quad  Y_{i}\overset{iid}{\sim}P_{0},
\end{equation}
with $\{Q_{i}, i\ge1\}$ as in \eqref{DP series representation}.

Similarly to the definition of mixtures of Dirichlet processes (Section \ref{subsec: FV-static}), we say that $z$ is a mixture of gamma random measures if $z\sim \int_{\mathcal{U}}\Gamma_{\alpha_{u}}^{\beta}\d H(u)$, and with a slight abuse of terminology we will also refer to the right hand side of the last expression as a mixture of gamma random measures.
Analogous conjugacy and projection properties to those seen for mixtures of Dirichlet processes hold for mixtures of gamma random measures:

\begin{itemize}
\item \emph{Conjugacy}:
let $N$ be a Poisson point process on $\Y$ with random intensity measure $z$, i.e., conditionally on $z$, $N(A_i)\overset{ind}{\sim}\text{Po}(z(A_{i}))$ for any measurable partition $A_1,\ldots,A_K$ of $Y$, $K\in\N$. Let $m:=N(\Y)$, and given $m$, let $y_1,\ldots,y_m$ be a realisation of points of $N$, so that
\begin{equation}\label{poisson data}
y_{i}\mid z,m\overset{iid}\sim z/|z|,\quad \quad
m\mid z\sim\text{Po}(|z|)\quad \quad
\end{equation}
where  $|z|:=z(\Y)$ is the total mass of $z$.
 Then
\begin{equation}\label{posterior gamma RM}
z\mid \yy_{1:m}
\sim \int_{\mathcal{U}}\Gamma_{\alpha_{u}+\sum_{i=1}^{m}\delta_{y_{i}}}^{\beta+1}\d H_{\yy_{1:m}}(u),
\end{equation}
where $H_{\yy_{1:m}}$ is the conditional distribution of $u$
given $\yy_{1:m}$. Hence mixtures of gamma random measures are conjugate with respect to Poisson point process data.

\item \emph{Projection}: for any measurable partition $A_{1},\ldots,A_{K}$ of  $\Y$, we have
  \begin{equation*}
(z(A_{1}),\ldots,z(A_{K}))
\sim
\int_{\mathcal{U}}\prod_{i=1}^{K}\text{Ga}(\alpha_{u,i},\beta)\d H(u),
\end{equation*}
where $\alpha_{u,i}=\alpha_{u}(A_{i})$ and $\text{Ga}(\alpha,\beta)$  denote the gamma distribution with shape $\alpha$ and rate $\beta$.
\end{itemize}
Letting $H$ be concentrated on a single point of $\mathcal{U}$  recovers the respective properties of gamma random measures as special case, i.e.~$z\sim \Gamma_{\alpha}^{\beta}$ and $y_{i}$ as in \eqref{poisson data} imply $z|y_{1:m}\sim \Gamma^{\beta+1}_{\alpha+\sum_{i=1}^{m}\delta_{y_{i}}}$, and the vector $(z(A_{1}),\ldots,z(A_{K}))$ has independent components $z(A_{i})$ with gamma distribution $\text{Ga}(\alpha_{i},\beta)$, $\alpha_{i}=\alpha(A_{i})$.

Finally, it is well known that \eqref{DP series representation} and \eqref{gamma process series} satisfy the relation in distribution
\begin{equation}\label{gamma-dirichlet algebra}
x(\cdot)\overset{d}{=}\frac{z(\cdot)}{z(\Y)}
\end{equation}
where $x$ is independent of $z(\Y)$. This extends to the
infinite dimensional case the well known relationship between beta and
gamma random variables. See for example \cite{DVJ08}, Example 9.1(e). See also \cite{KS88} for an extension of \eqref{gamma-dirichlet algebra} to the dynamic case concerning FV and DW processes, which requires a random time change. 


\subsubsection{The Dawson--Watanabe process}\label{subsec: DW-model}

 Dawson--Watanabe (DW) processes can be considered as dependent models for gamma random measures, and are, roughly speaking, the gamma counterpart of FV processes. More formally, 
they are branching measure-valued diffusions taking values in the space of finite discrete measures. As in the FV case, they describe evolving discrete measures whose support  varies with time and whose masses are each a positive diffusion, but relaxing the constraint of their masses summing to one to that of summing to a finite quantity. See \cite{D93} and \cite{L11} for reviews.
Here we are interested in the special case of subcritical branching
with immigration, where subcriticality refers to the fact that in the underlying branching population which can be used to construct the process, the mean number of offspring per individual is less than one. Specifically, we will consider DW processes with transition function
\begin{equation}\label{DW trans-funct}
P_{t}(z,\d z')
=\sum_{m=0}^{\infty}d_{m}^{|z|,\beta}(t)\int_{\Y^m}
\Gamma^{\beta+S^*_{t}}_{\alpha+\sum_{i=1}^{m}\delta_{y_{i}}}
(\d z')(z/|z|)^{m}(\d y_1,\ldots,\d y_m).
\end{equation}
where $$d_{m}^{|z|,\beta}(t)=\text{Po}\left(m\ \Big\vert \frac{|z|\beta}{e^{\beta t/2}-1}\right)\ \ \ \ \text{and}\ \ \ \ S^*_{t}:=\frac{\beta} {e^{\beta t/2}-1}.$$
See \cite{EG93b}.  The interpretation of \eqref{DW trans-funct} is similar to that of \eqref{FV trans-funct}: conditional on the current state, i.e.~the measure $z$, $m$ \emph{iid}  samples are drawn from the normalised measure $z/|z|$ and the arrival state $z'$ is sampled from $\Gamma^{\beta+S^*_t}_{\alpha+\sum_{i=1}^{m}\delta_{y_{i}}}$. Here the main structural difference with respect to \eqref{FV trans-funct}, apart from the different distributions involved, is that since in general $S^{*}_{t}$ is not an integer quantity, the interpretation as sampling the arrival state $z'$ from a posterior gamma law is not formally correct; cf.~\eqref{posterior gamma RM}. 
The sample size $m$ is chosen with probability $d_{m}^{|z|,\beta}(t)$, which is the probability that an $\N$-valued death process which starts at infinity at time 0 is in $m$ at time $t$, if it jumps from $m$ to $m-1$ at rate $(m \beta/2)( 1-e^{\beta t/2})^{-1}$. See \cite{EG93b} for details.
So $z$ and $z'$ will share fewer atoms the farther they are apart in time. 
The DW process with the above transition is known to be stationary and reversible with respect to the law $\Gamma^{\beta}_{\alpha}$ of a gamma random measure; cf.~\eqref{gamma process series}. See \cite{S90,EG93b}.  The Dawson--Watanabe process has been recently considered as a basis to build time-dependent gamma process priors with Markovian evolution in \cite{CT12} and \cite{SL16}. 

The DW process satisfies a projective property similar to that seen in Section \ref{subsec: FV-model} for the FV process. 
Let $Z_{t}$ have transition \eqref{DW trans-funct}. Given a measurable partition $A_{1},\ldots,A_{K}$ of $\Y$, the vector $(Z_{t}(A_{1}),\ldots,Z_{t}(A_{K}))$ has independent components $z_{t,i}=Z_{t}(A_{i})$ each driven by a Cox--Ingersoll--Ross (CIR) diffusion \citep{CIR85}. These are also subcritical continuous-state branching processes with immigration, reversible and ergodic with respect to a $\text{Ga}(\alpha_{i},\beta)$ distribution, with transition function
\begin{equation}\label{CIR trans-funct}
P_{t}(z_{i},\d z_{i}')
=\sum_{m_i=0}^{\infty}
\text{Po}\bigg(m_i\,\Big\vert\, \frac{z_i \beta  }{e^{\beta t/2}-1}\bigg)
\mathrm{Ga}\bigg(\d z' \,\Big\vert\, \alpha_i+m_i, \beta+S^*_{t}\bigg).
\end{equation}
As for FV and WF processes, a further
parameter $\sigma$ that controls the speed of the process can be introduced without affecting the stationary distribution. 
This can be done by defining an appropriate time change that can be used to model the dependence structure.


\section{Conjugacy properties of time-evolving Dirichlet and gamma
  random measures}\label{sec: main results}


\subsection{Filtering Fleming--Viot signals}\label{sec: FV results}

Let the latent signal $X_{t}$ be a FV process with transition function \eqref{FV trans-funct}. We assume that, given the signal state, observations are drawn independently from $x$, i.e.~as in \eqref{DP data} with $X_{t}=x$.
Since $x$ is almost surely discrete \citep{B73}, a sample $\yy_{1:m}=(y_{1},\ldots,y_{m})$ from $x$ will feature $K_{m}\le m$ ties among the observations with positive probability. Denote by
$(y_{1}^{*},\ldots,y_{K_{m}}^{*})$ the distinct values in $\yy_{1:m}$ and by $\mm=(m_{1},\ldots,m_{K_{m}})$ the associated multiplicities, so that $\norm{\mm}=m$. When an additional sample $\yy_{m+1:m+n}$ with multiplicities $\nn$ becomes available, we adopt the convention that $\nn$ adds up to the multiplicities of the types already recorded in $\yy_{1:m}$, so that the total multiplicities count is
\begin{equation}\label{count update}
\mm+\nn=(m_{1}+n_{1},\ldots,m_{K_{m}}+n_{K_{m}},n_{K_{m}+1},\ldots,n_{K_{m+n}}).
\end{equation} 
The following Lemma states in our notation the special case of the conjugacy for mixtures of Dirichlet processes which is of interest here; see Section \ref{subsec: FV-static}. To this end, let
 \begin{equation}\label{set of multiplicities}
 \M=\{\mm=(m_1,\ldots,m_{K})\in\Z^{K},\, K\in\N\}
 \end{equation}
 be the space of multiplicities of $K$ types, with partial ordering defined as in Section \ref{sec: notation}. 
Denote also by $\mathrm{PU}_{\alpha}(\yy_{m+1:m+n}\mid \yy_{1:m})$ the joint distribution of $\yy_{m+1:m+n}$ given $\yy_{1:m}$ when the random measure $x$ is marginalised out, which follows the Blackwell--MacQueen P\'olya urn scheme \citep{BM73} 
\begin{equation*}
\begin{split}
&\,Y_{m+i+1} \mid \yy_{1:m+i} \sim
\frac{\theta P_{0}+\sum_{j=1}^{m+i}\delta_{y_{j}}}{\theta+m+i},\quad \quad i=0,\ldots,n-1.
\end{split}
\end{equation*}

\begin{lemma}\label{lem: FV update multi}
Let $M\subset \M$, $\alpha$ as in \eqref{alpha measure} and $x$ be the mixture of Dirichlet processes
\begin{equation*}
x\sim \sum_{\mm \in M}w_{\mm}\Pi_{\alpha+\sum_{i=1}^{K_{m}}m_{i}\delta_{y_{i}^{*}}},
\end{equation*}
with $\sum_{\mm\in M}w_\mm=1$. Given an additional $n$-sized sample $\yy_{m+1:m+n}$ from $x$ with multiplicities $\nn$, the update operator \eqref{update operator} yields
\begin{align}\label{eq: NP-update}
\phi_{\yy_{m+1:m+n}}\bigg(\sum_{\mm \in M}w_{\mm}\Pi_{\alpha+\sum_{i=1}^{K_{m}}m_{i}\delta_{y_{i}^{*}}}\bigg)
=
\sum_{\mm\in M}\hat w_{\mm}\Pi_{\alpha+\sum_{i=1}^{K_{m+n}}(m_{i}+n_{i})\delta_{y_{i}^{*}}},
\end{align}
where
\begin{equation}\label{updated mixture weights}
\hat w_{\mm}\propto
w_{\mm}\,\mathrm{PU}_{\alpha}(\yy_{m+1:m+n}\mid \yy_{1:m}).
\end{equation}
\end{lemma}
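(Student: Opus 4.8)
The plan is to apply the update operator \eqref{update operator} directly to the mixture and exploit the fact that, once the likelihood factor of the new sample is attached, it acts linearly on the prior, so the analysis decouples across the mixture components. Write $\alpha_\mm=\alpha+\sum_{i=1}^{K_m}m_i\delta_{y_i^*}$ for the base measure of the $\mm$-th component. Since observations are drawn iid from the discrete measure $x$ as in \eqref{DP data}, the likelihood of the fresh sample under a state $x$ is $f_x(\yy_{m+1:m+n})=\prod_{j=m+1}^{m+n}x(\{y_j\})$. Substituting $\sum_{\mm\in M}w_\mm\Pi_{\alpha_\mm}$ into \eqref{update operator} and pushing this common factor inside the sum, the numerator becomes $\sum_{\mm\in M}w_\mm\,f_x(\yy_{m+1:m+n})\,\Pi_{\alpha_\mm}(\d x)$, with the normalising constant $p_\nu(\yy_{m+1:m+n})$ equal to the integral of this expression against $\d x$.

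First I would treat a single component. Each $\Pi_{\alpha_\mm}$ is itself a Dirichlet process whose base records pseudo-counts $m_i$ at the locations $y_i^*$; equivalently it is the law obtained by updating $\Pi_\alpha$ on pseudo-data $\yy_{1:m}$ with multiplicities $\mm$. The conjugacy property recalled in Section \ref{subsec: FV-static} then applies verbatim with $\alpha_\mm$ in place of $\alpha$: conditioning $\Pi_{\alpha_\mm}$ on $\yy_{m+1:m+n}$ returns the Dirichlet process with base $\alpha_\mm+\sum_{j=m+1}^{m+n}\delta_{y_j}$, and by the multiplicity-combining convention \eqref{count update} this base equals $\alpha+\sum_{i=1}^{K_{m+n}}(m_i+n_i)\delta_{y_i^*}$, exactly the parameter on the right-hand side of \eqref{eq: NP-update}. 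The same statement identifies the per-component normalising constant $c_\mm:=\int f_x(\yy_{m+1:m+n})\,\Pi_{\alpha_\mm}(\d x)$ as the marginal law of the new sample with $x$ integrated out, namely the Blackwell--MacQueen predictive of $\yy_{m+1:m+n}$ started from the pseudo-counts of $\mm$, which is precisely $\mathrm{PU}_\alpha(\yy_{m+1:m+n}\mid\yy_{1:m})$ with $\yy_{1:m}$ carrying multiplicities $\mm$.

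Collecting the components finishes the argument: the numerator equals $\sum_{\mm\in M}w_\mm\,c_\mm\,\Pi_{\alpha+\sum_{i=1}^{K_{m+n}}(m_i+n_i)\delta_{y_i^*}}(\d x)$ and the denominator equals $\sum_{\mm\in M}w_\mm c_\mm$, so the updated law is the claimed mixture with weights $\hat w_\mm=w_\mm c_\mm/\sum_{\mm'}w_{\mm'}c_{\mm'}$, yielding \eqref{updated mixture weights} after normalisation. Because the distinct values $y_i^*$ are shared across components while only the multiplicity vectors differ, the index set stays $M$ with each $\mm$ carried to $\mm+\nn$, so the posterior remains a finite mixture of the same cardinality.

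The main obstacle is measure-theoretic rather than algebraic: the ``density'' $f_x$ is the evaluation of a discrete random measure at the observed atoms, so the factorisation $f_x(\yy_{m+1:m+n})\,\Pi_{\alpha_\mm}=c_\mm\,\Pi_{\alpha_\mm+\sum_j\delta_{y_j}}$ is not a naive Radon--Nikodym manipulation but the content of Antoniak's posterior characterisation; invoking the conjugacy property of Section \ref{subsec: FV-static} componentwise is what legitimises it. The only bookkeeping subtlety is verifying that \eqref{count update} merges repeated and newly observed types correctly, so that the updated base measures align across all components; once that is in place the proof is a direct assembly.
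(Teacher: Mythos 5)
Your proposal is correct and follows essentially the same route as the paper: the paper simply packages your componentwise Bayes computation as a direct appeal to Antoniak's Lemma 1 and Corollary 3.2' for mixtures of Dirichlet processes (with discrete mixing measure $H=\sum_{\mm\in M}w_{\mm}\delta_{\mm}$), which yields in one step both the updated component laws $\Pi_{\alpha_{\mm}+\sum_{i=m+1}^{m+n}\delta_{y_{i}}}$ and the posterior mixing weights proportional to $w_{\mm}\,\mathrm{PU}_{\alpha}(\yy_{m+1:m+n}\mid\yy_{1:m})$. Your identification of the per-component normalising constant with the Blackwell--MacQueen predictive, and your caveat that the ``likelihood'' factorisation is legitimised by Antoniak's posterior characterisation rather than a dominated-model Bayes formula, are exactly the content the paper delegates to that citation.
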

The updated distribution is thus still a mixture of Dirichlet processes with different multiplicities and possibly new atoms in the parameter measures $\alpha+\sum_{i=1}^{K_{m+n}}(m_{i}+n_{i})\delta_{y_{i}^{*}}$. 

The following Theorem formalises our main result on FV processes, showing that the family of finite mixtures of Dirichlet processes is conjugate with respect to discretely sampled data as in \eqref{DP data} with $X_{t}=x$.  
For $\M$ as in \eqref{set of multiplicities}, let
 \begin{equation}\label{L(M)}
\begin{split}
   L(\mm)=&\,\{\nn\in\M:\ \oo\le\nn\le\mm\},\quad \mm\in \M,\\
  L(M)=&\,\{\nn\in\M:\ \oo\le\nn\le\mm,\, \mm\in M\},\quad M\subset \M,
\end{split}
 \end{equation}
 be the set of nonnegative vectors lower than or equal to $\mm$ or to those in $M$ respectively, with ``$\le$'' defined as in Section \eqref{sec: notation}. For example, in Figure \ref{fig: graph 1}, $L(3)$ and $L((1,2))$ are both given by all yellow and red nodes in each case. Let also \begin{equation}\label{hypergeometric}
 p(\ii;\,\mm,|\ii|)=\binom{\norm{\mm} }{|\ii|}^{-1}
 \prod_{j\ge1}\binom{m_{j}}{i_{j}}
 \end{equation}
be the multivariate hypergeometric probability function, with parameters $(\mm,|\ii|)$, evaluated at $\ii$.

\begin{theorem}\label{thm: FV propagation}
Let $\psi_{t}$ be the prediction operator \eqref{prediction operator} associated to a FV process with transition operator \eqref{FV trans-funct}. Then the prediction operator yields as $t$-time-ahead propagation the finite mixture of Dirichlet processes
\begin{equation}\label{FV propagation}
\psi_{t}\bigg(\Pi_{\alpha+\sum_{i=1}^{K_{m}}m_{i}\delta_{y_{i}^{*}}}\bigg)=
\sum_{\nn\in L(\mm)}p_{\mm,\nn}(t)
\Pi_{\alpha+\sum_{i=1}^{K_{m}}n_{i}\delta_{y_{i}^{*}}},
\end{equation}
with $L(\mm)$ as in \eqref{L(M)} and where
 \begin{equation}\label{transition probabilities}
 p_{\mm,\mm-\ii }(t)=
\left\{
\begin{array}{ll}
e^{-\lambda_{\norm{\mm}}t},& \ii=\oo\\
  C_{\norm{\mm} ,\norm{\mm} -|\ii|}(t) p(\ii;\,\mm,|\ii|),
 \quad \quad &  \oo< \ii\le \mm,
\end{array}
\right. 
 \end{equation}
with
 \begin{equation*}
C_{\norm{\mm} ,\norm{\mm} -|\ii|}(t)=
\bigg(\prod_{h=0}^{|\ii|-1}\lambda_{\norm{\mm} -h}\bigg)
 (-1)^{|\ii|}\sum_{k=0}^{|\ii|}\frac{e^{-\lambda_{\norm{\mm} -k}t}}{\prod_{0\le h\le |\ii|,h\ne k}(\lambda_{\norm{\mm} -k}-\lambda_{\norm{\mm} -h})},
\end{equation*} 
$\lambda_{n}=n(\theta+n-1)/2$ and $ p(\ii;\,\mm,|\ii|)$ as in \eqref{hypergeometric}. 
\end{theorem}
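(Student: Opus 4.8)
The plan is to establish \eqref{FV propagation} by reducing the infinite-dimensional statement to its finite-dimensional projections, exploiting that projection onto a partition and time-propagation commute (this is the content of Figure~\ref{fig: scheme}). Recall that the law of a random element of $\MM_{1}(\Y)$ is determined by the joint laws of the evaluations $\rho_{\mathbf{A}}(x):=(x(A_{1}),\ldots,x(A_{K}))$ as $\mathbf{A}=(A_{1},\ldots,A_{K})$ ranges over all finite measurable partitions of $\Y$ and $K\in\N$; this is exactly the characterisation underlying the Dirichlet process and its projection property recalled in Section~\ref{subsec: FV-static}. Since both sides of \eqref{FV propagation} are probability measures on $\MM_{1}(\Y)$, it suffices to prove that, for every such $\mathbf{A}$, the push-forwards of the two sides under $\rho_{\mathbf{A}}$ agree.

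First I would show that prediction commutes with projection: for any law $\nu$ on $\MM_{1}(\Y)$, the image under $\rho_{\mathbf{A}}$ of $\psi_{t}(\nu)$ equals $\psi_{t}^{\mathrm{WF}}$ applied to the image of $\nu$ under $\rho_{\mathbf{A}}$, where $\psi_{t}^{\mathrm{WF}}$ is the prediction operator of the $K$-dimensional Wright--Fisher diffusion with transition \eqref{WF trans-funct}. This follows from Fubini's theorem together with the projective property of the FV process from Section~\ref{subsec: FV-model}, namely that the image of the FV kernel $P_{t}(x,\cdot)$ under $\rho_{\mathbf{A}}$ is the WF kernel started at $\rho_{\mathbf{A}}(x)$. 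Taking $\nu=\Pi_{\alpha+\sum_{i}m_{i}\delta_{y_{i}^{*}}}$ and using the projection property of Dirichlet processes, the $\rho_{\mathbf{A}}$-image of the left-hand side of \eqref{FV propagation} becomes $\psi_{t}^{\mathrm{WF}}(\pi_{\aa+\wt{\mm}})$, where $\aa=(\alpha(A_{1}),\ldots,\alpha(A_{K}))$ and the integer vector $\wt{\mm}$ has entries $\wt m_{k}=\sum_{i:\,y_{i}^{*}\in A_{k}}m_{i}$ collecting the multiplicities of the atoms that fall in cell $A_{k}$. Note that the mutation parameter $\theta=\alpha(\Y)$, hence every $\lambda_{n}$ and every factor $C$, is unchanged by the projection.

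On the finite-dimensional side I would then invoke the $K$-dimensional WF propagation, the multivariate analogue of the illustration in Section~\ref{sec: illustration} and of Lemma~\ref{lemma: death transitions probabilities}, which follows from \eqref{WF trans-funct} and the duality and filtering results of \cite{PR14}: it gives $\psi_{t}^{\mathrm{WF}}(\pi_{\aa+\wt{\mm}})=\sum_{\wt{\nn}\le\wt{\mm}}p_{\wt{\mm},\wt{\nn}}(t)\,\pi_{\aa+\wt{\nn}}$. The substantive input here is that the infinite series defining $P_{t}$ collapses to this finite mixture. Projecting instead the right-hand side of \eqref{FV propagation} via the Dirichlet projection property yields $\sum_{\nn\in L(\mm)}p_{\mm,\nn}(t)\,\pi_{\aa+\wt{\nn}(\nn)}$, where $\wt{\nn}(\nn)$ has entries $\sum_{i:\,y_{i}^{*}\in A_{k}}n_{i}$. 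Grouping this sum by the value of the aggregated vector, the two projections coincide for all $\mathbf{A}$ if and only if the weights obey the lumping identity $p_{\wt{\mm},\wt{\nn}}(t)=\sum p_{\mm,\nn}(t)$, the sum running over $\nn\in L(\mm)$ whose cell totals $(\sum_{i:\,y_{i}^{*}\in A_{k}}n_{i})_{k}$ equal $\wt{\nn}$.

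This lumping identity is the only genuine computation and the step I expect to be most delicate. I would settle it using the factorisation in \eqref{transition probabilities}: writing $\nn=\mm-\ii$ one has $p_{\mm,\mm-\ii}(t)=C_{|\mm|,|\mm|-|\ii|}(t)\,p(\ii;\mm,|\ii|)$, in which $C_{|\mm|,|\mm|-|\ii|}(t)$ depends on $\mm$ and $\nn$ only through the totals $|\mm|$ and $|\nn|=|\mm|-|\ii|$. These totals are preserved under aggregation, since every $\nn$ contributing to a fixed $\wt{\nn}$ satisfies $|\nn|=|\wt{\nn}|$, so the common factor $C$ pulls out and the identity reduces to the consistency of the multivariate hypergeometric law \eqref{hypergeometric} under lumping of categories, $p(\wt{\ii};\wt{\mm},|\wt{\ii}|)=\sum_{\ii}p(\ii;\mm,|\ii|)$ with the sum over $\ii$ whose cell totals equal $\wt{\ii}$. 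This is a group-wise Vandermonde identity, $\sum_{\ii}\prod_{j}\binom{m_{j}}{i_{j}}=\prod_{k}\binom{\wt m_{k}}{\wt i_{k}}$, divided by the invariant normalisation $\binom{|\mm|}{|\ii|}$. Besides matching the weights for every partition, it also certifies, through Kolmogorov consistency, that the right-hand side of \eqref{FV propagation} is a well-defined probability measure on $\MM_{1}(\Y)$; the finiteness of the mixture itself is inherited from the death-process collapse at the finite-dimensional level, and the main care is in checking the consistency uniformly over both partitions that separate the atoms $y_{i}^{*}$ and coarser ones.
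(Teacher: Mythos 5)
Your proposal is correct and follows essentially the same route as the paper's own proof: project onto an arbitrary finite measurable partition, use that the FV projection is a Wright--Fisher diffusion whose prediction operator yields the finite Dirichlet mixture (the paper's Proposition \ref{prop: PR14-WF-infty}), and then verify that the mixture weights are consistent under merging of classes, which reduces—exactly as you observe, since $C_{\norm{\mm},\norm{\mm}-|\ii|}(t)$ depends only on the totals—to the marginalisation (lumping) property of the multivariate hypergeometric distribution. The only cosmetic difference is that you spell out the commutation of projection and prediction via Fubini and the Vandermonde identity behind the hypergeometric consistency, which the paper handles by citation.
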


The transition operator of the FV process thus maps a Dirichlet process at time $t_{0}$ into a finite mixture of Dirichlet processes at time $t_{0}+t$. The mixing weights are the transition probabilities of a death process on the $K_{m}$ dimensional lattice, with $K_{m}$ being as in \eqref{FV propagation} the number of distinct values in previous data. 
The result is obtained by means of the argument described in Figure \ref{fig: scheme}, which is based on
the property that the operations of propagating and projecting the signal
commute. By projecting the current distribution of the signal onto an
arbitrary measurable partition, yielding a mixture of Dirichlet
distributions, we can exploit the results for finite
dimensional WF signals to yield the associated propagation \citep{PR14}. The propagation of the original signal is then obtained by means of the characterisation of mixtures of Dirichlet processes via their
projections. See Section \ref{sec: filtering FV} for a proof. 
In particular, the result shows that under these
assumptions, the series expansion for the transition function
\eqref{FV trans-funct} reduces to a finite sum. 

Iterating the update and propagation operations provided by Lemma \ref{lem: FV update multi} and Theorem \ref{thm: FV propagation} allows to perform sequential Bayesian inference on a hidden signal of FV type by means of a finite computation. Here the finiteness refers to the fact that the infinite dimensionality due to the transition function of the signal is avoided analytically, without resorting to any stochastic truncation method for \eqref{FV trans-funct}, e.g.~\citep{W07,PR08}, and the computation can be conducted in closed form. 

The following Proposition formalises the recursive algorithm that sequentially evaluates the marginal posterior laws $\L(X_{t_n} | Y_{1:n})$ of a partially observed FV process by alternating the update and propagation operations, and identifies the family of distributions which is closed with respect to these operations. 
Define the family of finite mixtures of Dirichlet processes
\begin{align}\notag
\Fdir=\!
\Bigg\{\sum_{\mm\in M}w_{\mm}\Pi_{\alpha+\sum_{i=1}^{K_{m}}m_{i}\delta_{y_{i}^{*}}}\!:M \subset \M,|M|<\infty, w_{\mm}\ge0,\sum_{\mm\in M}w_\mm=1\Bigg\},
\end{align}
with $\M$ as in \eqref{set of multiplicities} and for a fixed $\alpha$ as in \eqref{alpha measure}. Define also
\begin{equation*}
t(\yy,\mm)=\mm+\nn,\quad \mm\in\Z^{K}
\end{equation*} 
so that $t(\yy,\mm)$ is \eqref{count update} if $\nn$ are the multiplicities of $\yy$, and
\begin{equation}\label{t map}
t(\yy,M) = \{\nn:\ \nn=t(\yy,\mm), \mm \in M \} , \quad M\subset \M.
\end{equation}

\begin{proposition}\label{prop: algorithm FV} 
Let $X_{t}$ be a FV process with transition function \eqref{FV trans-funct} and invariant law $\Pi_{\alpha}$ defined as in Section \ref{subsec: FV-static}, and suppose data are collected as in \eqref{DP data} with $X_{t}=x$. Then $\Fdir$ is closed under the application of the update and prediction operators \eqref{update operator} and \eqref{prediction operator}. Specifically,
\begin{align}\label{eq: FV algorithm update}
\phi_{\yy_{m+1:m+n}}\Bigg(\sum_{\mm \in M}w_{\mm}\Pi_{\alpha+\sum_{i=1}^{K_{m}}m_{i}\delta_{y_{i}^{*}}}\Bigg)
=
\sum_{\nn\in t(\yy_{m+1:m+n},M)}\hat w_{\nn}\Pi_{\alpha+\sum_{i=1}^{K_{m+n}}n_{i}\delta_{y_{i}^{*}}},
\end{align}
with $t(\yy,M)$ as in \eqref{t map},
\begin{equation}
  \label{eq:update-mix FV}\nonumber
\hat w_{\nn}\propto
w_{\mm}\,\mathrm{PU}_{\alpha}(\yy_{m+1:m+n}\mid \yy_{1:m})
   \quad   \textrm{for}\quad  \nn = t(\yy,\mm) \,,\sum_{\nn
      \in t(\yy,M)} \hw_\nn =1\,,
\end{equation}
and
\begin{equation}\label{eq: FV algorithm propagation}
\psi_{t}\Bigg(\sum_{\mm\in M}w_{\mm}\Pi_{\alpha+\sum_{i=1}^{K_{m}}m_{i}\delta_{y_{i}^{*}}}\Bigg)
=
\sum_{\nn\in L(M)}p(M,\nn,t)
\Pi_{\alpha+\sum_{i=1}^{K_{m}}n_{i}\delta_{y_{i}^{*}}},
\end{equation}
with
\begin{equation}\label{FV propagation mix weight}
p(M,\nn,t)=\sum_{\mm\in M,\, \mm\ge \nn}w_{\mm}p_{\mm,\nn}(t)
\end{equation} 
and $p_{\mm,\nn}(t)$ as in \eqref{transition probabilities}.
\end{proposition}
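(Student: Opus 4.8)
The plan is to derive both closure statements from the single-component results already established, namely Lemma~\ref{lem: FV update multi} for the update operator and Theorem~\ref{thm: FV propagation} for the prediction operator, together with the linearity of these operators and an elementary reorganisation of the resulting sums. The genuine analytic content of the Proposition is therefore already contained in those two results; what remains is to check that each operation maps a \emph{finite} mixture into another finite mixture of the same form and to reconcile the indexing conventions so that the stated weight formulas emerge.

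For the update identity \eqref{eq: FV algorithm update}, I would apply Lemma~\ref{lem: FV update multi} directly. That Lemma already asserts that $\phi_{\yy_{m+1:m+n}}$ sends $\sum_{\mm\in M}w_{\mm}\Pi_{\alpha+\sum_{i}m_{i}\delta_{y_{i}^{*}}}$ to a mixture whose components carry the updated parameter measures $\alpha+\sum_{i}(m_{i}+n_{i})\delta_{y_{i}^{*}}$ with weights proportional to $w_{\mm}\,\mathrm{PU}_{\alpha}(\yy_{m+1:m+n}\mid\yy_{1:m})$. The only discrepancy is bookkeeping: the Lemma keeps the summation index $\mm\in M$, whereas the Proposition relabels each component by its total multiplicity vector $\nn=t(\yy,\mm)=\mm+\nn$, so that the index set becomes $t(\yy,M)$ as in \eqref{t map}. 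Since $t(\yy,\cdot)$ is a shift by the fixed multiplicity vector of $\yy$, it is injective and hence a bijection of $M$ onto $t(\yy,M)$; it carries $\alpha+\sum_{i}(m_{i}+n_{i})\delta_{y_{i}^{*}}$ to $\alpha+\sum_{i}n_{i}\delta_{y_{i}^{*}}$ and transports the weight formula into the stated proportionality for $\hat w_{\nn}$. Finiteness of $M$ forces finiteness of $t(\yy,M)$, so the image lies in $\Fdir$.

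For the prediction identity \eqref{eq: FV algorithm propagation}, I would first use that the prediction operator \eqref{prediction operator} is an integral against the transition kernel and hence linear, giving
\[
\psi_{t}\Bigl(\sum_{\mm\in M}w_{\mm}\Pi_{\alpha+\sum_{i}m_{i}\delta_{y_{i}^{*}}}\Bigr)
=\sum_{\mm\in M}w_{\mm}\,\psi_{t}\bigl(\Pi_{\alpha+\sum_{i}m_{i}\delta_{y_{i}^{*}}}\bigr).
\]
Applying Theorem~\ref{thm: FV propagation} to each summand rewrites the right-hand side as the double sum $\sum_{\mm\in M}\sum_{\nn\in L(\mm)}w_{\mm}\,p_{\mm,\nn}(t)\,\Pi_{\alpha+\sum_{i}n_{i}\delta_{y_{i}^{*}}}$. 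I would then interchange the order of summation: the set of target vectors that occur is exactly $L(M)=\bigcup_{\mm\in M}L(\mm)$, and for fixed $\nn\in L(M)$ a given component contributes precisely when $\nn\in L(\mm)$, i.e.\ when $\mm\in M$ with $\mm\ge\nn$. Collecting the coefficient of $\Pi_{\alpha+\sum_{i}n_{i}\delta_{y_{i}^{*}}}$ thus yields $\sum_{\mm\in M,\,\mm\ge\nn}w_{\mm}\,p_{\mm,\nn}(t)$, which is exactly $p(M,\nn,t)$ of \eqref{FV propagation mix weight}. Since $M$ is finite and each $L(\mm)$ is a finite lattice box, $L(M)$ is finite and the image again lies in $\Fdir$.

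The one point requiring genuine care---the main obstacle, such as it is---is the labelling of atoms when components are merged in the prediction step. For the reindexing of the double sum to produce a \emph{single} Dirichlet component $\Pi_{\alpha+\sum_{i}n_{i}\delta_{y_{i}^{*}}}$ per target $\nn$, the distinct atoms $y_{1}^{*},y_{2}^{*},\ldots$ must be treated as a common fixed labelling shared by all components of the mixture, with each $\mm\in\M$ read as a vector of multiplicities over this master set (a zero entry meaning the corresponding atom is absent from that component). Under this convention the parameter measure $\alpha+\sum_{i}n_{i}\delta_{y_{i}^{*}}$ depends on $\nn$ alone, the grouping by target vector is unambiguous, and the remainder of the argument reduces to the purely combinatorial reorganisation described above.
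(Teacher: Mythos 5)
Your proposal is correct and follows essentially the same route as the paper's proof: the update identity is read off directly from Lemma~\ref{lem: FV update multi}, and the prediction identity is obtained from Theorem~\ref{thm: FV propagation} together with the linearity of the prediction operator and an interchange of the order of summation yielding the weights \eqref{FV propagation mix weight}. Your additional remarks on the reindexing via $t(\yy,\cdot)$ and on the common labelling of the atoms $y_{i}^{*}$ are consistent with the conventions the paper sets up in \eqref{count update} and Section~\ref{sec: notation}, and merely make explicit bookkeeping that the paper leaves implicit.
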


Note that  the update operation \eqref{eq: FV algorithm update} preserves the number of components in the mixture, while the prediction operation \eqref{FV propagation} increases this number. The intuition behind this point is analogous to the illustration in Section \ref{sec: illustration}, where the prior (node $(0,0)$) is update to the posterior (node $(2,1)$) and propagated into a mixture (coloured nodes), with the obvious difference that the recorded number of distinct values is unbounded.

Algorithm \ref{alg: FV} describes in pseudo-code the implementation of
the filter for FV processes. 

\IncMargin{5mm}
\begin{algorithm}[t]
\footnotesize
\vspace{2mm}
\hspace{-5mm} \KwData{$\yy_{t_{j}}=(y_{t_{j},1},\ldots,y_{t_{j},m_{t_{j}}})$ at times $t_{j}$, $j=0,\ldots,J$, as in \eqref{DP data}}
\hspace{-5mm} Set prior parameters $\alpha=\theta P_{0}$, $\theta>0$, $P_{0}\in \MM_{1}(\Y)$\\[0mm]

 \SetKwBlock{Begin}{Initialise}{end}
 \Begin{
$\yy\leftarrow\emptyset$, $\yy^{*}=\emptyset$, $m\leftarrow0$, $\mm\leftarrow\oo$, $M\leftarrow\{\oo\}$, $K_{m}\leftarrow0$, 
$w_{\oo}\leftarrow1$\\
}

 \SetKwBlock{Begin}{For $j=0,\ldots,J$}{end}
 \Begin{
 \SetKwBlock{Begin}{Compute data summaries}{end}
 \Begin{
read data $\yy_{t_{j}}$\\
$m\leftarrow m+\text{card}(\yy_{t_{j}})$\\
$\yy^{*}\leftarrow$ distinct values in $\yy^{*}\cap \yy_{t_{j}}$\\
$K_{m}=\text{card}(\yy^{*})$
}

 \SetKwBlock{Begin}{Update operation}{end}
 \Begin{
 
 \SetKwBlock{Begin}{for $\mm\in M$}{end}
  \Begin{
$\nn\leftarrow t(\yy_{t_{j}},\mm)$\\
$w_{\nn}\leftarrow
w_{\mm}\,\mathrm{PU}_{\alpha}(\yy_{t_{j}}\mid \yy)
$\\[0mm]
   }
    $M\leftarrow t(\yy_{t_{j}},M)$\\
 \SetKwBlock{Begin}{for $\mm\in M$}{end}
  \Begin{       $w_{\mm}\leftarrow w_{\mm}/\sum_{\ell \in M}w_{\ell}$
  }
    $X_{t_{j}}\mid \yy,\yy_{t_{j}}\sim    \sum_{\mm\in M}w_{\mm}\Pi_{\alpha+\sum_{i=1}^{K_{m}}m_{i}\delta_{y_{i}^{*}}}$
 }
 
    \SetKwBlock{Begin}{Propagation operation}{end}
 \Begin{
    \SetKwBlock{Begin}{for $\nn\in L(M)$}{end}
  \Begin{
$w_{\nn}\leftarrow p(M,\nn,t_{j+1}-t_{j})$ as in \eqref{FV propagation mix weight}\\
   }
   $M\leftarrow L(M)$\\
   $X_{t_{j+1}}\mid \yy,\yy_{t_{j}}\sim    \sum_{\mm\in M} w_{\mm}\Pi_{\alpha+\sum_{i=1}^{K_{m}}m_{i}\delta_{y_{i}^{*}}}$\\
  }

$\yy\leftarrow\yy\cup \yy_{t_{j}}$
   }
 \caption{Filtering algorithm for FV signals}\label{alg: FV}
\end{algorithm}


\subsection{Filtering Dawson--Watanabe signals}\label{subsec: DW-DW}

Let now the signal $Z_{t}$ follow a DW process with transition
function \eqref{DW trans-funct}, with invariant measure given by the law $\Gamma^{\beta}_{\alpha}$ of a gamma random measure; see \eqref{gamma process series}. 
We assume that, given the signal state, observations are drawn from a Poisson point process with intensity $z$, i.e., as in \eqref{poisson data} with $Z_{t}=z$. Analogously to the FV case, since $z$ is almost surely discrete, a sample $\yy_{1:m}=(y_{1},\ldots,y_{m})$ from \eqref{poisson data} will feature $K_{m}\le m$ ties among the observations with positive probability. To this end, we adopt the same notation as in Section \ref{sec: FV results}. 

The following Lemma states in our notation the special case of the conjugacy for mixtures of gamma random measures which is of interest here; see Section \ref{subsec: DW-static}.

\begin{lemma}\label{prop: DW update}
Let $\M$ be as in \eqref{set of multiplicities}, $M\subset \M$, $\alpha$ as in \eqref{alpha measure} and $z$ be the mixture of gamma random measures
\begin{equation*}
z\sim \sum_{\mm \in M}w_{\mm}\Gamma^{\beta+1}_{\alpha+\sum_{i=1}^{K_{m}}m_{i}\delta_{y_{i}^{*}}},
\end{equation*}
with $\sum_{\mm\in M}w_\mm=1$. Given an additional $n$-sized sample $\yy_{m+1:m+n}$ from $z$ as in \eqref{poisson data}  with multiplicities $\nn$, the update operator \eqref{update operator} yields
\begin{align}\label{eq: NP-update}
\phi_{\yy_{m:m+n}}\Bigg(\sum_{\mm \in M}w_{\mm}\Gamma^{\beta+1}_{\alpha+\sum_{i=1}^{K_{m}}m_{i}\delta_{y_{i}^{*}}}\Bigg)
=
\sum_{\mm\in M}\hat w_{\mm}
\Gamma^{\beta+2}_{\alpha+\sum_{i=1}^{K_{m+n}}(m_{i}+n_{i})\delta_{y_{i}^{*}}},
\end{align}
with $\hat w_{\mm}$ as in \eqref{updated mixture weights}.
\end{lemma}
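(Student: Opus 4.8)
The plan is to mirror the proof of the Dirichlet analogue, Lemma \ref{lem: FV update multi}, replacing the likelihood of \emph{i.i.d.} sampling by that of the Poisson point process \eqref{poisson data} and invoking the single-component conjugacy of gamma random measures recalled in Section \ref{subsec: DW-static}. Since the update operator \eqref{update operator} acts linearly on the unnormalised numerator $f_{z}(\yy)\nu(\d z)$ of a finite mixture, it suffices to act on a single component $\Gamma^{\beta+1}_{\alpha+\sum_{i}m_{i}\delta_{y_{i}^{*}}}$ and then renormalise over $M$. First I would record the emission density: because $z$ is almost surely purely atomic, a realisation of \eqref{poisson data} with distinct values $y_{j}^{*}$ and multiplicities $\nn$ has likelihood proportional to $e^{-|z|}\prod_{j}z(\{y_{j}^{*}\})^{n_{j}}$, where the factor $e^{-|z|}$ is precisely what will increment the rate parameter upon conditioning.

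Next, for a fixed component I would invoke the gamma conjugacy of Section \ref{subsec: DW-static}: multiplying the density $\Gamma^{\beta+1}_{\alpha+\sum_{i}m_{i}\delta_{y_{i}^{*}}}(\d z)$ by $e^{-|z|}\prod_{j}z(\{y_{j}^{*}\})^{n_{j}}$ raises the rate from $\beta+1$ to $\beta+2$ and augments the shape measure by $\sum_{j}n_{j}\delta_{y_{j}^{*}}$, producing $\Gamma^{\beta+2}_{\alpha+\sum_{i}(m_{i}+n_{i})\delta_{y_{i}^{*}}}$ as on the right-hand side of the claimed identity, together with a normalising constant $c_{\mm}$ equal to the marginal likelihood of $\yy_{m+1:m+n}$ under that component. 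By the definition of $\phi$ in \eqref{update operator} the updated weights are then $\hat w_{\mm}\propto w_{\mm}c_{\mm}$, so the entire content of the Lemma reduces to identifying $c_{\mm}$ with the P\'olya urn predictive $\mathrm{PU}_{\alpha}(\yy_{m+1:m+n}\mid\yy_{1:m})$ appearing in \eqref{updated mixture weights}.

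To evaluate $c_{\mm}$ I would exploit the gamma--Dirichlet algebra \eqref{gamma-dirichlet algebra}: writing $z=|z|\,\bar z$ with $|z|\sim\mathrm{Ga}(\theta+\norm{\mm},\beta+1)$ independent of $\bar z\sim\Pi_{\alpha+\sum_{i}m_{i}\delta_{y_{i}^{*}}}$, the integral defining $c_{\mm}$ factorises into the total-mass contribution $\E[e^{-|z|}|z|^{n}]$, a one-dimensional gamma integral, times the normalised contribution $\E\big[\prod_{j}\bar z(\{y_{j}^{*}\})^{n_{j}}\big]$, which is exactly the Blackwell--MacQueen predictive probability of $\yy_{m+1:m+n}$ given $\yy_{1:m}$, i.e. $\mathrm{PU}_{\alpha}(\yy_{m+1:m+n}\mid\yy_{1:m})$. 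The main obstacle, and the feature absent from the Dirichlet proof, is the bookkeeping of this total-mass factor: the gamma integral contributes a ratio of gamma functions and powers of $\beta+1$ and $\beta+2$ indexed by $\norm{\mm}$, and one must check how these $\norm{\mm}$-dependent constants interact with the urn normalisation $\prod_{i=0}^{n-1}(\theta+\norm{\mm}+i)$ so that, after dividing by $\sum_{\mm\in M}w_{\mm}c_{\mm}$, the weights leave exactly the stated form $\hat w_{\mm}\propto w_{\mm}\,\mathrm{PU}_{\alpha}(\yy_{m+1:m+n}\mid\yy_{1:m})$. Verifying this reduction is the delicate step, and it is precisely where the Poisson count structure of \eqref{poisson data} enters.
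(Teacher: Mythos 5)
Your proposal is correct and follows essentially the same route as the paper: both rest on the gamma--Dirichlet factorisation \eqref{gamma-dirichlet algebra}, under which $|z|$ and $z/|z|$ are independent, so that the Poisson count updates the total-mass gamma law while the conditionally i.i.d.\ values update the Dirichlet process exactly as in Lemma \ref{lem: FV update multi}. The ``delicate step'' you flag --- tracking the $\norm{\mm}$-dependent total-mass factor in the marginal likelihood before renormalising the weights --- is precisely the point the paper dispatches by appealing to this independence and to the argument of the Dirichlet case, so your explicit gamma-integral bookkeeping is a more detailed rendering of the same argument rather than a genuinely different one.
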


The updated distribution is thus still a mixture of gamma random measures with updated parameters and the same number of components.

The following Theorem formalises our main result on DW processes, showing that the family of finite mixtures of gamma random measures is conjugate with respect to data as in \eqref{poisson data} with $Z_{t}=z$.  

\begin{theorem}\label{thm: DW propagation}
Let $\psi_{t}$ be the prediction operator \eqref{prediction operator} associated to a DW process with transition operator \eqref{DW trans-funct}. Let also $L(M)$ be as in \eqref{L(M)}. Then the prediction operator yields as $t$-time-ahead propagation the finite mixture of gamma random measures
\begin{equation}\label{gamma propagation}
\psi_{t}\Big(\Gamma^{\beta+\ss}_{\alpha+\sum_{i=1}^{K_{m}}m_{i}\delta_{y_{i}^{*}}}\Big)=
\sum_{\nn\in L(\mm)}
\tilde p_{\mm,\nn}(t)
\Gamma^{\beta+\SS_{t}}_{\alpha+\sum_{i=1}^{K_{m}}n_{i}\delta_{y_{i}^{*}}},
\end{equation}
where
\begin{equation}\label{tilde pmn}
\tilde p_{\mm,\nn}(t)=\mathrm{Bin}(\norm{\mm}-|\nn|;\,\norm{\mm},p(t))p(\nn;\, \mm,|\nn|),
\end{equation}
and
\begin{equation}\label{bernoulli parameter}
p(t)=\SS_{t}/\SS_{0},\quad 
\SS_{t}=\frac{\beta \SS_{0}}{(\beta+\SS_{0})e^{\beta t/2}-\SS_{0}}, \quad \SS_{0}=\ss.
\end{equation}
with $p(\nn;\,\mm,|\nn|)$ as in \eqref{hypergeometric} and $\mathrm{Bin}(\norm{\mm}-|\nn|;\,\norm{\mm},p(t))$ denoting a Binomial pmf with parameters $(\norm{\mm},p(t))$ evaluated at $\norm{\mm}-|\nn|$.
\end{theorem}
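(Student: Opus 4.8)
The plan is to implement the commutation of projection and propagation sketched in Figure~\ref{fig: scheme}, so as to avoid integrating the infinite series \eqref{DW trans-funct} directly against the gamma random measure. First I would fix an arbitrary measurable partition $A_1,\ldots,A_K$ of $\Y$, refined enough that the finitely many atoms $y_1^*,\ldots,y_{K_m}^*$ fall in distinct blocks. By the projection property of Section~\ref{subsec: DW-static}, the law $\Gamma^{\beta+\ss}_{\alpha+\sum_i m_i\delta_{y_i^*}}$ pushed forward under $z\mapsto(z(A_1),\ldots,z(A_K))$ is a product of independent gammas, the block containing $y_j^*$ giving $\mathrm{Ga}(\alpha(A_j)+m_j,\beta+\ss)$ and every other block giving $\mathrm{Ga}(\alpha(A_j),\beta+\ss)$; crucially all blocks share the single rate $\beta+\ss$.

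By the projective property of the DW process recalled after \eqref{CIR trans-funct}, the projected signal $(Z_t(A_1),\ldots,Z_t(A_K))$ is a vector of independent CIR diffusions with common rate $\beta$, so that the projection of $\psi_t\big(\Gamma^{\beta+\ss}_{\alpha+\sum_i m_i\delta_{y_i^*}}\big)$ is the product of the $K$ one-dimensional CIR propagations of the projected gammas. I would apply Lemma~\ref{lemma: 1-CIR propagation} to each factor, obtaining $\psi_t(\mathrm{Ga}(\alpha(A_j)+m_j,\beta+\ss))=\sum_{n_j=0}^{m_j}\binom{m_j}{n_j}p(t)^{n_j}(1-p(t))^{m_j-n_j}\mathrm{Ga}(\alpha(A_j)+n_j,\beta+\SS_t)$, with $p(t)=\SS_t/\SS_0$ and $\SS_t$ as in \eqref{bernoulli parameter}; the shared initial rate $\beta+\ss$ forces a common $\SS_t$, hence a common arrival rate $\beta+\SS_t$ across all blocks, which is what makes recombination into a single gamma random measure possible. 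Expanding the product over $j$ and collecting by $\nn\le\mm$, the product of independent binomial thinnings equals $p(t)^{|\nn|}(1-p(t))^{|\mm|-|\nn|}\prod_j\binom{m_j}{n_j}$, and since $\binom{|\mm|}{|\nn|}=\binom{|\mm|}{|\mm|-|\nn|}$ this is exactly $\mathrm{Bin}(|\mm|-|\nn|;|\mm|,p(t))\,p(\nn;\mm,|\nn|)$ with $p(\nn;\mm,|\nn|)$ the hypergeometric \eqref{hypergeometric}, reproducing the weight \eqref{tilde pmn}. The probabilistic content is the familiar decomposition of independent thinnings into the total number of survivors, which is binomial, and their allocation among the blocks given that total, which is multivariate hypergeometric.

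The remaining gamma factors $\prod_j\mathrm{Ga}(\alpha(A_j)+n_j,\beta+\SS_t)$ are precisely the projection onto the same partition of $\Gamma^{\beta+\SS_t}_{\alpha+\sum_i n_i\delta_{y_i^*}}$, so for every such partition the projection of $\psi_t(\Gamma^{\beta+\ss}_{\alpha+\sum_i m_i\delta_{y_i^*}})$ agrees with the projection of $\sum_{\nn\le\mm}\tilde p_{\mm,\nn}(t)\,\Gamma^{\beta+\SS_t}_{\alpha+\sum_i n_i\delta_{y_i^*}}$. As these agreements hold across all partitions and the law of a finite Borel measure is determined by its finite-dimensional evaluations, the characterisation of gamma random measures and of finite mixtures thereof by their projections upgrades this to the equality of laws \eqref{gamma propagation}. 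I expect the main obstacle to be the rigorous justification of this commute-and-characterise step: one has to verify that projecting the DW semigroup and then running the independent CIRs really does coincide, for every partition, with running the DW semigroup and then projecting, and to check that the atoms generated afresh by the transition \eqref{DW trans-funct} do not survive as spurious mixture components. The latter is resolved by noting that such atoms are drawn from the non-atomic base measure and are therefore absorbed into the continuous part governed by $\alpha$, which is invisible at the level of block masses; this is exactly the feature the projection argument exploits. A minor point to dispatch is the degenerate blocks $A_j$ carrying no atom, for which Lemma~\ref{lemma: 1-CIR propagation} returns the single term $\mathrm{Ga}(\alpha(A_j),\beta+\SS_t)$ and thus contributes only to the base measure, consistently with \eqref{gamma propagation}.
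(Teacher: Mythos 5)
Your proof is correct and follows essentially the same route as the paper's: project onto a finite measurable partition, propagate the resulting independent gammas via the one-dimensional CIR result of Lemma \ref{lemma: 1-CIR propagation}, recombine the product of binomial thinnings into a binomial law for the total number of survivors times the multivariate hypergeometric \eqref{hypergeometric} for their allocation, and conclude by the characterisation of gamma random measures through their finite-dimensional projections. The only organisational difference is that the paper isolates the multivariate CIR propagation as Proposition \ref{prop: K-CIR implications} (writing the kernels as a gamma for the total mass times a Dirichlet via \eqref{gamma-dirichlet algebra}) and then checks consistency of the hypergeometric weights under merging of classes for an arbitrary partition, whereas you work directly with partitions separating the atoms $y_{1}^{*},\ldots,y_{K_{m}}^{*}$, which packages the same combinatorial fact in an equally valid and slightly leaner way.
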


The transition operator of the DW process thus maps a gamma random measure into a finite mixture of gamma random measures. The time-varying mixing weights factorise into the binomial transition probabilities of a one-dimensional death process starting at the total size of previous data $\norm{\mm}$  and into a hypergeometric pmf. The intuition is that the death process regulates how many levels down the $K_{m}$ dimensional lattice are taken, and the hypergeometric probability chooses which admissible path down the graph is chosen given the arrival level. In  Figure \ref{fig: graph 1} we would have $K_{m}=2$ distinct values with multiplicites $\mm=(2,1)$ and total size $\norm{\mm}=3$. Then, e.g., $\tilde p_{(2,1),(1,1)}(t)$, is given by the probability $\mathrm{Bin}(1;\,3,p(t))$ that the death process jumps down one level from $3$ in time $t$ (Figure \ref{fig: graph 1}-(a)), times the probability $p((1,1);\, (2,1),2)$, conditional on going down one level, of reaching $(1,1)$ from $(2,1)$ instead of $(2,0)$, i.e.~of removing one item from the pair and not the singleton observation. The Binomial transition of the one-dimensional death process is subordinated to a deterministic process $\SS_{t}$ which modulates the sample size continuously in \eqref{gamma propagation}, starts at the value $\SS_{0}=s$ (cf.~the left hand side of \eqref{gamma propagation}) and converges to $0$ as $t\rightarrow \infty$. 

The result is obtained by means of a similar argument to that used for Theorem \eqref{thm: FV propagation}, jointly with the relation \eqref{gamma-dirichlet algebra} (which here suffices to apply at the margin of the process). In particular, we exploit the fact that the projection of a DW process onto an arbitrary partition of the space yields a vector of independent CIR processes.  See Section \ref{sec: filtering DW} for a proof. Analogously to the FV case, the result shows that under the present assumptions, the series expansion for the transition function \eqref{DW trans-funct} reduces to a finite sum. 

The following Proposition formalises the recursive algorithm that  evaluates the marginal posterior laws $\L(X_{t_n} | Y_{1:n})$ of a partially observed DW process, allowing to perform sequential Bayesian inference on a hidden signal of DW type by means of a finite computation and within the family of finite mixtures of gamma random measures.
Define such family as
\begin{align*}
\Fgamma =
\Bigg\{&\,\sum_{\mm\in M}w_{\mm}\Gamma^{\beta+\ss}_{\alpha+\sum_{i=1}^{K_{m}}m_{i}\delta_{y_{i}^{*}}}:\\ 
&\,s>0,\, M \subset \M,\ |M|<\infty,\, w_{\mm}\ge0,\  \sum_{\mm\in M}w_\mm=1\Bigg\},
\end{align*}
with $\M$ as in \eqref{set of multiplicities}.

\begin{proposition}\label{prop: algorithm DW}
Let $Z_{t}$ be a DW process with transition function \eqref{DW trans-funct} and invariant law $\Gamma_{\alpha}^{\beta}$ defined as in Section \ref{subsec: DW-static}, and suppose data are collected as in \eqref{poisson data} with $Z_{t}=z$. Then $\Fgamma$ is closed under the application of the update and prediction operators \eqref{update operator} and \eqref{prediction operator}. Specifically,
\begin{equation}\label{DW algorithm update}
\phi_{\yy_{m+1:m+n}}\Bigg(\sum_{\mm \in M}w_{\mm}\Gamma^{\beta+\ss}_{\alpha+\sum_{i=1}^{K_{m}}m_{i}\delta_{y_{i}^{*}}}\Bigg)
=
\sum_{\nn\in t(\yy_{m+1:m+n},M)}\hat w_{\nn}
\Gamma^{\beta+\ss+1}_{\alpha+\sum_{i=1}^{K_{m+n}}n_{i}\delta_{y_{i}^{*}}},
\end{equation}
with $t(\yy,M)$ as in \eqref{t map}, $\hat w_{\nn}$ as in Proposition \ref{prop: algorithm FV}, 
and
\begin{align}\label{DW algorithm prediction}
\psi_{t}\bigg(\sum_{\mm\in M}w_{\mm}\Gamma^{\beta+\ss}_{\alpha+\sum_{i=1}^{K_{m}}m_{i}\delta_{y_{i}^{*}}}\bigg)
=
\sum_{\nn\in L(M)}
p(M,\nn,t)
\Gamma^{\beta+\SS_{t}}_{\alpha+\sum_{i=1}^{K_{m}}n_{i}\delta_{y_{i}^{*}}}.
\end{align}
with
\begin{equation}\label{DW propagation mix weight}
p(M,\nn,t)=\sum_{\mm\in M,\, \mm\ge \nn}w_{\mm}\tilde p_{\mm,\nn}(t)
\end{equation} 
and $\tilde p_{\mm,\nn}(t)$ as in \eqref{tilde pmn} and $\SS_{t}$ as in \eqref{bernoulli parameter}.
\end{proposition}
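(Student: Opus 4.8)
The plan is to derive both assertions as corollaries of the single-component results already in hand---Lemma~\ref{prop: DW update} for the update and Theorem~\ref{thm: DW propagation} for the prediction---combined with the elementary linearity of the two operators \eqref{update operator} and \eqref{prediction operator}. The only genuinely new content is the bookkeeping that reorganises the resulting sums into the stated mixture form and checks that the output stays inside $\Fgamma$.

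For the update identity \eqref{DW algorithm update}, I would first record that $\phi_{\yy_{m+1:m+n}}$ acts on a finite mixture by updating each component and reweighting: writing $\nu=\sum_{\mm\in M}w_{\mm}\nu_{\mm}$ with $\nu_{\mm}=\Gamma^{\beta+\ss}_{\alpha+\sum_{i}m_{i}\delta_{y_{i}^{*}}}$, the definition \eqref{update operator} gives $\phi_{\yy}(\nu)=\sum_{\mm}\hat w_{\mm}\,\phi_{\yy}(\nu_{\mm})$ with $\hat w_{\mm}\propto w_{\mm}\,p_{\nu_{\mm}}(\yy)$. Invoking the conjugacy of gamma random measures from Section~\ref{subsec: DW-static} (equation \eqref{posterior gamma RM}), which under the Poisson likelihood \eqref{poisson data} increments the rate of each component by exactly one, yields $\phi_{\yy}(\nu_{\mm})=\Gamma^{\beta+\ss+1}_{\alpha+\sum_{i}(m_{i}+n_{i})\delta_{y_{i}^{*}}}$; this is Lemma~\ref{prop: DW update} with the rate shifted from $1$ to $\ss$, the argument being verbatim the same. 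The marginal likelihood factor $p_{\nu_{\mm}}(\yy)$ equals the P\'olya urn expression $\mathrm{PU}_{\alpha}(\yy_{m+1:m+n}\mid\yy_{1:m})$ by the gamma--Dirichlet algebra \eqref{gamma-dirichlet algebra}, since conditionally on the total count the Poisson locations are exchangeable draws from the same urn. Relabelling each source index $\mm$ by its image $\nn=t(\yy,\mm)$ under \eqref{t map}---which merges coincident atoms and appends any new ones---then produces exactly \eqref{DW algorithm update}.

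For the prediction identity \eqref{DW algorithm prediction}, linearity of \eqref{prediction operator} gives $\psi_{t}(\nu)=\sum_{\mm\in M}w_{\mm}\,\psi_{t}(\nu_{\mm})$, and Theorem~\ref{thm: DW propagation} expands each term as $\psi_{t}(\nu_{\mm})=\sum_{\nn\in L(\mm)}\tilde p_{\mm,\nn}(t)\,\Gamma^{\beta+\SS_{t}}_{\alpha+\sum_{i}n_{i}\delta_{y_{i}^{*}}}$, with $\tilde p_{\mm,\nn}(t)$ as in \eqref{tilde pmn}. The crucial structural point, which I would emphasise, is that the arrival rate $\beta+\SS_{t}$ is \emph{identical} across all components: by \eqref{bernoulli parameter} the deterministic dual $\SS_{t}$ depends only on the common starting value $\SS_{0}=\ss$ and on $t$, never on $\mm$. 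Hence the propagated mixture again carries a single shared rate and lies in $\Fgamma$. It then remains to exchange the order of summation in the double sum over $(\mm,\nn)$: for each fixed arrival multiplicity $\nn\in L(M)$ I collect all source indices $\mm\in M$ with $\mm\ge\nn$ (exactly those for which $\nn\in L(\mm)$), giving the aggregated weight $p(M,\nn,t)=\sum_{\mm\in M,\,\mm\ge\nn}w_{\mm}\tilde p_{\mm,\nn}(t)$ of \eqref{DW propagation mix weight} and thereby \eqref{DW algorithm prediction}.

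The proof is thus essentially combinatorial bookkeeping layered on top of the two cited results, and I anticipate no analytic obstacle. The one point deserving care is the verification that the propagated rate $\beta+\SS_{t}$ is common to every component---this is precisely what keeps $\Fgamma$ closed, and it rests on the fact, visible in \eqref{bernoulli parameter}, that $\SS_{t}$ is a function of $(\ss,t)$ alone. This feature is specific to the Dawson--Watanabe setting and has no counterpart in the Fleming--Viot case of Proposition~\ref{prop: algorithm FV}, where no such rate parameter appears.
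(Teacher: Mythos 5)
Your proposal is correct and follows essentially the same route as the paper: the update is the mixture conjugacy of Lemma~\ref{prop: DW update} (with the rate $\beta+1$ replaced by $\beta+\ss$, as you note), and the prediction is Theorem~\ref{thm: DW propagation} combined with linearity of $\psi_t$ and an exchange of the order of summation over $(\mm,\nn)$. Your added remark that $\SS_t$ depends only on $(\ss,t)$ and hence yields a single common rate across all propagated components is a worthwhile clarification of why $\Fgamma$ is closed, but it does not change the argument.
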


Algorithm \ref{alg: DW} describes in pseudo-code the implementation of the filter for DW processes.

\IncMargin{0mm}
\begin{algorithm}[t]
\footnotesize
\vspace{2mm}
\hspace{-5mm} \KwData{$(m_{t_{j}},\yy_{t_{j}})=(m_{t_{j}},y_{t_{j},1},\ldots,y_{t_{j},m_{t_{j}}})$ at times $t_{j}$, $j=0,\ldots,J$, as in \eqref{poisson data}}
\hspace{-5mm} Set prior parameters $\alpha=\theta P_{0}$, $\theta>0$, $P_{0}\in \MM_{1}(\Y)$, $\beta>0$\\[0mm]

 \SetKwBlock{Begin}{Initialise}{end}
 \Begin{
$\yy\leftarrow\emptyset$, $\yy^{*}=\emptyset$, $m\leftarrow0$, $\mm\leftarrow\oo$, $M\leftarrow\{\oo\}$, $K_{m}\leftarrow0$, 
$w_{\oo}\leftarrow1$, $s=0$\\
}

 \SetKwBlock{Begin}{For $j=0,\ldots,J$}{end}
 \Begin{
 \SetKwBlock{Begin}{Compute data summaries}{end}
 \Begin{
read data $\yy_{t_{j}}$\\
$m\leftarrow m+\text{card}(\yy_{t_{j}})$\\
$\yy^{*}\leftarrow$ distinct values in $\yy^{*}\cap \yy_{t_{j}}$\\
$K_{m}=\text{card}(\yy^{*})$
}

 \SetKwBlock{Begin}{Update operation}{end}
 \Begin{
 
 \SetKwBlock{Begin}{for $\mm\in M$}{end}
  \Begin{
$\nn\leftarrow t(\yy_{t_{j}},\mm)$\\
$w_{\nn}\leftarrow
w_{\mm}\,\mathrm{PU}_{\alpha}(\yy_{t_{j}}\mid \yy)
$\\[0mm]
   }
    $M\leftarrow t(\yy_{t_{j}},M)$\\
 \SetKwBlock{Begin}{for $\mm\in M$}{end}
  \Begin{       $w_{\mm}\leftarrow w_{\mm}/\sum_{\ell \in M}w_{\ell}$
  }
    $X_{t_{j}}\mid \yy,\yy_{t_{j}}\sim    \sum_{\mm\in M}w_{\mm}\Gamma^{\beta+s}_{\alpha+\sum_{i=1}^{K_{m}}m_{i}\delta_{y_{i}^{*}}}$
 }

    \SetKwBlock{Begin}{Propagation operation}{end}
 \Begin{
 \SetKwBlock{Begin}{for $\nn \in L(M)$}{end}
  \Begin{
$w_{\nn}\leftarrow p(M,\nn,t_{j+1}-t_{j})$ as in \eqref{DW propagation mix weight}\\
   }
                  $M\leftarrow L(M)$\\
    $s'\leftarrow\SS_{t_{j+1}-t_{j}}$ as in \eqref{bernoulli parameter}, $\SS_{0}=s$\\
   $X_{t_{j+1}}\mid \yy,\yy_{t_{j}}\sim \sum_{\mm\in M}w_{\mm}\Gamma^{\beta+s'}_{\alpha+\sum_{i=1}^{K_{m}}m_{i}\delta_{y_{i}^{*}}}$\\
$   s\leftarrow s'$\\
  }
$\yy\leftarrow\yy\cup \yy_{t_{j}}$
   }
 \caption{Filtering algorithm for DW signals}\label{alg: DW}
\end{algorithm}


\section{Theory for computable filtering of FV and DW signals}\label{sec: computable filtering}


\subsection{Computable  filtering and duality}\label{sec: filtering and duality}

A filter is said to be computable if the sequence of filtering
distributions (the marginal laws of the signal given past and current data) can be characterised by a set of parameters whose
computation is achieved at a cost that grows at most polynomially with
the number of observations. See, e.g., \cite{CG06}.  Special cases of this framework are finite dimensional filters for which the computational cost is linear in the number of observations, the Kalman filter for linear Gaussian HMMs being the
reference model in this setting. 

Let $\X$ denote the state space of the HMM.
\cite{PR14} showed that the existence of a computable filter can be established if the following structures are embedded in the model:
\begin{description}
\item \emph{Conjugacy}: there exists a function $h(x,\mm,\theta)\geq 0$,
  where $x \in \X$,  $\mm \in \Z^{K}$ for some
  $K\in\N$, and $\theta \in \R^l$ for some $l\in\N$, and functions $t_{1}(y,\mm)$
  and $t_{2}(y,\theta)$ such that $\int h(x,\mm,\theta) \pi(dx)  =1$, for all $\mm$ and $\theta$, and
\[
\phi_y (h(x,\mm,\theta) \pi(dx) ) = h(x,t_{1}(y,\mm),t_{2}(y,\theta))\pi(dx).
\]
Here $h(x,\mm,\theta)\pi(dx)$ identifies a parametric family of distributions which is closed under Bayesian updating with respect to the observation model. 
Two types of parameters are considered, a multi-index
$\mm$ and a vector of real-valued parameters $\theta$. The update
operator $\phi_y$ maps the distribution $h(x,\mm,\theta)\pi(\d x)$,
conditional on the new observation $y$, into a density of the same
family with updated parameters $t_{1}(y,\mm)$ and $t_{2}(y,\theta)$.  
Typically $\pi(dx)$ is the prior and $h(x,\mm,\theta)$ is the Radon--Nikodym derivative of the posterior with respect to the prior, when the model is dominated. See, e.g., \eqref{WF duality function} below for an example in the Dirichlet case. 
\item[]
\item \emph{Duality}: there exists a two-component Markov
process $(M_t,\Theta_t)$ with state-space $\Z^{K} \times \R^l$ and
infinitesimal generator
 \begin{equation*}
\label{eq:gen-dual}
 \begin{split}
  (A g)(\mm,\theta) =&\, \lambda(\norm{\mm}) \rho(\theta) \sum_{i=1}^K m_i
   [g(\mm\!-\!\ee_i,\theta)\!-\!g(\mm,\theta)]\! +\! \sum_{i=1}^{l}\r_{i}(\theta)   \frac{\partial g(\mm,\theta)}{\partial \theta}
    \end{split}
\end{equation*}
acting on bounded functions, such that $(M_t,\Theta_t)$ is \emph{dual} to $X_t$ with respect to the function $h$, i.e., it satisfies
\begin{equation}
  \label{duality identity}
   \begin{split}
  \E^x[h(X_t,\mm,\theta)] = \E^{(\mm,\theta)}[h(x,M_t,\Theta_t)],
   \end{split}
\end{equation}
for all $x \in \X, \mm \in \Z^K, \theta \in \R^l, t\geq 0$.
Here $M_t$ is a death process on $\Z^{K}$, i.e.~a non-increasing pure-jump continuous time Markov process, which jumps from $\mm$ to $\mm-\ee_{i}$ at rate $\lambda(\norm{\mm})m_{i} \rho(\theta)$ and is eventually absorbed at the origin; $\Theta_{t}$ is a deterministic process assumed to evolve autonomously according to a system of ordinary differential equations $r(\Theta_t)=d\Theta_t/dt$ for some initial condition $\Theta_0=\theta_0$ and a suitable function $r:\R^l\to\R^l$ (whose $i$-coordinate is denoted by $r_i$ in the generator $A$ above and modulates the death rates of $M_t$ through $\rho(\theta)$; the expectations on the left and right hand sides are taken with respect to the law of $X_{t}$ and $(M_t,\Theta_{t})$ respectively, conditional on the respective starting points.
\end{description}

The duality condition \eqref{duality identity} hides a specific distributional relationship between the signal process $X_{t}$, which can be thought of as the forward process, and the dual process
$(M_{t},\Theta_{t})$, which can be thought of as unveiling some features of the time reversal structure of $X_{t}$. Informally, the death process can be considered as the time reversal of collecting data points if they come at random times, and the deterministic process, in the CIR example (see Section \eqref{sec: illustration}), can be considered as a continuous reversal of increasing (by steps) the sample size. 
More formally, in the well known duality relation between WF
processes and Kingman's coalescent, for example, the latter describes
the genealogical tree back in time of a sample of individuals from the current population. See \cite{GS10}. See also \cite{JK14} for a review of duality structures for Markov processes. 

Note that a local sufficient condition for \eqref{duality identity} is
\begin{equation}\label{local duality}
(\A h(\cdot,\mm,\theta))(x)
=(A h(x,\cdot,\cdot))(\mm,\theta), 
\end{equation}
for all $\forall x\in \X, \mm\in \Z^K, \theta \in \R^l$, where $A$ is as above and $\A$ denotes the generator of the signal $X_t$, which is usually easier to check.

Under the above conditions, Proposition 2.3 of \cite{PR14} shows that given the family of distributions
\begin{equation*}
\F=\Big\{h(x,\mm,\theta)\pi(\d x),\, \mm \in \Z^K, \theta \in \R^l\Big\},
\end{equation*} 
if $\nu \in \F$, then the filtering distribution $\nu_n$ which satisfies \eqref{recursion} is a finite mixture of distributions in $\F$ with parameters that can be computed recursively. This in turn implies that the family of finite mixtures of elements of $\F$ is closed under the iteration of update and prediction operations. 

The interpretation is along the lines of the illustration of Section \ref{sec: illustration}. Here $\pi$, the
stationary measure of the forward process, plays the role of the prior distribution and is represented by the
origin of $\Z^{K}$ (see Figure \ref{fig: graph 1}), which encodes the lack of information on the data generating distribution. 
Given a sample from the conjugate observation model, a single component posterior distribution is identified by a node different from the origin in $\Z^{K}$. The propagation operator then gives positive mass at all nodes which lie beneath the current nodes with positive mass. The filtering distribution then evolves within the family of finite mixtures of elements of $\F$.

\subsection{Computable filtering for Fleming--Viot processes}\label{sec: filtering FV}

In the present and the following Section we adopt the same notation  used in Section \ref{sec: main results}. 
We start by formally stating the precise form for the transition probabilities of the death processes involved in the FV filtering. Here the key point to observe is that since the number of distinct types observed in the discrete samples from a FV process is $K_{m}\le m$, we only need to consider a generic death processes on $\Z^{K_{m}}$ and not on $\Z^{\infty}$. For FV processes, the deterministic component $\Theta_t$ is constant: here we set $\Theta_t =1$ for every $t$ and we omit $\theta$ from the arguments of the duality function $h$.\\
The following Lemma will provide the building block for the proof of Theorem \ref{thm: FV propagation}. In particular, it shows that the transition probabilities of the dual death  process are of the form required as coefficients in the expansion \eqref{transition probabilities}.

\begin{lemma}\label{lemma: death transitions probabilities}
Let $M_{t}\subset\Z^{\infty}$ be a death process that starts from $M_{0}=\mm_{0}\in\M$, $\M$ as in \eqref{set of multiplicities}, and jumps from $\mm$ to $\mm-\ee_{i}$ at rate $m_{i}(\theta+\mm-1)/2$, with generator
\begin{equation*}
\frac{\theta+\norm{\mm}-1}{2}\sum_{i\ge1}m_{i}h(\xx,\mm-\ee_{i})-\frac{\norm{\mm}(\theta+\norm{\mm}-1)}{2}h(\xx,\mm).
\end{equation*}
Then the transition probabilities for $M_{t}$ are
 \begin{equation}\label{M_transition probabilities}
 p_{\mm,\mm-\ii }(t)=
\left\{
\begin{array}{ll}
e^{-\lambda_{\norm{\mm}}t},& \ii=\oo,\\
  C_{\norm{\mm} ,\norm{\mm} -|\ii|}(t) p(\ii;\,\mm,|\ii|),
 \quad \quad &  \oo< \ii\le \mm,
\end{array}
\right. 
 \end{equation}
where 
 \begin{equation*}
C_{\norm{\mm} ,\norm{\mm} -|\ii|}(t)=
\bigg(\prod_{h=0}^{|\ii|-1}\lambda_{\norm{\mm} -h}\bigg)
 (-1)^{|\ii|}\sum_{k=0}^{|\ii|}\frac{e^{-\lambda_{\norm{\mm} -k}t}}{\prod_{0\le h\le |\ii|,h\ne k}(\lambda_{\norm{\mm} -k}-\lambda_{\norm{\mm} -h})},
\end{equation*} 
$\lambda_{n}=n(\theta+n-1)/2$ and $ p(\ii;\,\mm,|\ii|)$ as in \eqref{hypergeometric}, and 0 otherwise.
\end{lemma}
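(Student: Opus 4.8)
The plan is to exploit the fact that the death process $M_t$ factorises into an autonomous one-dimensional death process for the total mass, modulated by a purely combinatorial mechanism selecting which coordinate is decremented. Since $M_0=\mm_0$ has finite support, the process effectively lives on a finite-dimensional lattice, and the two factors appearing in \eqref{M_transition probabilities}, namely $C_{\norm{\mm},\norm{\mm}-|\ii|}(t)$ and $p(\ii;\,\mm,|\ii|)$, will be shown to arise respectively from these two features. First I would observe that the total jump rate out of a state $\mm$ is $\sum_{i\ge1}m_i(\theta+\norm{\mm}-1)/2=\norm{\mm}(\theta+\norm{\mm}-1)/2=\lambda_{\norm{\mm}}$, which depends on $\mm$ only through $\norm{\mm}$. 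Hence the total-mass process $\norm{M_t}$ is itself a one-dimensional pure death process on $\{0,1,2,\ldots\}$ jumping from $n$ to $n-1$ at rate $\lambda_n$, with holding time at each state exponential of a parameter depending only on the current total mass.

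Passing to the embedded jump chain, from $\mm$ the process moves to $\mm-\ee_i$ with probability equal to $m_i(\theta+\norm{\mm}-1)/2$ divided by the total rate $\lambda_{\norm{\mm}}$, which equals $m_i/\norm{\mm}$. This is exactly the law of removing one item without replacement from an urn holding $m_i$ items of colour $i$; iterating, after $|\ii|$ jumps the vector of removed items is multivariate hypergeometric, with probability $p(\ii;\,\mm,|\ii|)$ as in \eqref{hypergeometric}. Crucially, since the holding-time distribution depends on the state only through the total mass and not on the composition, the sequence of visited compositions is independent of the jump times. Conditioning on the event that $|\ii|$ jumps have occurred by time $t$, the probability of having removed exactly $\ii$ is therefore $p(\ii;\,\mm,|\ii|)$, independently of $t$, giving the factorisation
\begin{equation*}
p_{\mm,\mm-\ii}(t)=\Pr\big(\norm{M_t}=\norm{\mm}-|\ii|\mid \norm{M_0}=\norm{\mm}\big)\,p(\ii;\,\mm,|\ii|),
\end{equation*}
so that $C_{\norm{\mm},\norm{\mm}-|\ii|}(t)$ is identified with the one-dimensional transition probability of $\norm{M_t}$.

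It then remains to solve for the transition probabilities of the one-dimensional death process with rates $\lambda_n$. Writing $c_{n,n-k}(t)$ for the probability of being in $n-k$ at time $t$ having started in $n$, these satisfy the Kolmogorov forward equations $\dot c_{n,n-k}=\lambda_{n-k+1}c_{n,n-k+1}-\lambda_{n-k}c_{n,n-k}$ with $c_{n,n}(0)=1$. Because $\lambda_{j+1}-\lambda_j=(\theta+2j)/2>0$, the rates $\lambda_{n-k},\ldots,\lambda_n$ are distinct, the restricted generator is triangular with these distinct eigenvalues, and the solution is a linear combination $\sum_{l=0}^{k}a_l e^{-\lambda_{n-l}t}$. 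I would fix the coefficients $a_l$ by a standard partial-fractions (Lagrange-interpolation) argument, which reproduces precisely the stated form of $C_{\norm{\mm},\norm{\mm}-|\ii|}(t)$; the case $\ii=\oo$ reduces to the single holding-time probability $e^{-\lambda_{\norm{\mm}}t}$, matching the first line of \eqref{M_transition probabilities}. Equivalently, one may verify the closed form directly by substitution into the forward equations and checking the initial condition.

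The main obstacle is the clean justification of the factorisation: one must argue that the composition selected by the embedded chain is probabilistically independent of the number of jumps realised by time $t$. This independence is exactly what follows from the holding rate $\lambda_{\norm{\mm}}$ depending on the state only through the total mass. The one-dimensional transition-probability computation, while requiring the partial-fraction bookkeeping, is otherwise routine and coincides with the classical formula for the block-counting process associated with the rates $\lambda_n$ (cf.\ \citep{T84}).
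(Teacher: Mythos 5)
Your proposal is correct and follows essentially the same route as the paper, which disposes of this lemma by noting that $|\mm_0|<\infty$ and citing Proposition 2.1 of \cite{PR14}; the argument behind that cited result is exactly your decomposition of $M_t$ into the one-dimensional death process $\norm{M_t}$ with rates $\lambda_n$ (yielding the coefficients $C_{\norm{\mm},\norm{\mm}-|\ii|}(t)$ via the classical distinct-eigenvalue formula) and the independent multivariate hypergeometric allocation of the decrements (yielding $p(\ii;\,\mm,|\ii|)$). You have simply written out in full the details that the paper delegates to the reference, including the key independence justification based on the holding rates depending on the state only through the total mass.
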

\begin{proof}
Since $|\mm_{0}|<\infty$, for any such $\mm_{0}$ the proof is analogous to that of Proposition 2.1 in \cite{PR14}.
\end{proof}

The following Proof of the conjugacy for mixtures of Dirichlet processes is due to \cite{A74} and outlined here for the ease of the reader. 

\bigskip
\noindent \textbf{Proof of Lemma \ref{lem: FV update multi}}\\
\noindent The distribution $x$ is a mixture of Dirichlet processes with mixing measure $H(\cdot)=\sum_{\mm\in M}w_{\mm}\delta_{\mm}(\cdot)$ on $M$ and transition measure
\begin{equation*}
\alpha_{\mm}(\cdot)
=\alpha(\cdot)+\sum_{j=1}^{K_{m}}m_{j}\delta_{y_{j}^{*}}(\cdot)
=\alpha(\cdot)+\sum_{i=1}^{m}\delta_{y_{i}}(\cdot),
\end{equation*}
where $\yy_{1:m}$ is the full sample.
See Section \ref{subsec: FV-static}. Lemma 1 and Corollary 3.2' in \cite{A74} now imply that
\begin{equation*}
x\mid \mm,\yy_{m+1:m+n}\sim
\Pi_{\alpha_{\mm}(\cdot)+\sum_{i=m+1}^{n}\delta_{y_{i}}(\cdot)}
=\Pi_{\alpha(\cdot)+\sum_{i=1}^{n}\delta_{y_{i}}}
\end{equation*}
and $H(\mm\mid \yy_{m+1:m+n})\propto w_{\mm}\,\mathrm{PU}_{\alpha}(\yy_{m+1:m+n}\mid \yy_{1:m})$.\qed

\bigskip
As preparatory for the main result on FV processes, we derive here in detail the propagation step for WF processes, which is due to \cite{PR14}. Let 
\begin{equation}\label{eq:WF-generator}
\Gwf f(\xx)=
\frac{1}{2}\sum_{i,j=1}^{K}x_{i}(\delta_{ij}-x_{j})\frac{\partial^{2}f(\xx)}{\partial x_{i}\partial x_{j}}
+\frac{1}{2}\sum_{i=1}^{K}(\alpha_{i}-\na x_{i})\frac{\partial f(\xx)}{\partial x_{i}}
\end{equation}
be the infinitesimal generator of a $K$-dimensional WF diffusion, with $\alpha_{i}>0$ and $\sum_{i}\alpha_{i}=\theta$. 
Here $\delta_{ij}$ denotes Kronecker delta and $\Gwf$ acts on
$C^{2}(\DK)$ functions, with 
\begin{equation}\label{eq:K-simplex}\notag
\DK=\Big\{x\in[0,1]^{K}:\, \sum\nolimits_{i=1}^{K}x_{i}=1\Big\}.
\end{equation}

\begin{proposition}\label{prop: PR14-WF-infty}
Let $\XX_{t}$ be a WF diffusion with generator \eqref{eq:WF-generator}
and Dirichlet invariant measure on \eqref{eq:K-simplex} denoted $\pi_{\aa}$. Then, for any $\mm\in\Z^{\infty}$ such that $|\mm|<\infty$,
\begin{equation}\label{nu-m propagation inf}
\psi_{t}\big(\pi_{\aa+\mm}\big)
=\sum_{\substack{\oo \le \ii\le \mm}}p_{\mm,\mm-\ii}(t)\pi_{\aa+\mm-\ii},
\end{equation}
with $p_{\mm,\mm-\ii}(t)$ as in \eqref{lemma: death transitions probabilities}.
\end{proposition}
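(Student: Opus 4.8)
The plan is to combine two structural facts about the $K$-dimensional Wright--Fisher diffusion—its reversibility with respect to the Dirichlet stationary law $\pi_{\aa}$ and its duality with the multivariate death process of Lemma \ref{lemma: death transitions probabilities}—to collapse the prediction integral into a finite sum. Throughout, write $h(\xx,\mm)=\pi_{\aa+\mm}(\d\xx)/\pi_{\aa}(\d\xx)$ for the Radon--Nikodym derivative of the updated Dirichlet law with respect to the stationary law; explicitly this is proportional to the monomial $\xx^{\mm}$, and it is precisely the conjugacy/duality function of the WF model in the sense of Section \ref{sec: filtering and duality}, satisfying $\int h(\xx,\mm)\pi_{\aa}(\d\xx)=1$.

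First I would rewrite the left-hand side using this density together with the prediction operator \eqref{prediction operator}, namely $\psi_{t}(\pi_{\aa+\mm})(\d\xx')=\int_{\DK} h(\xx,\mm)\,\pi_{\aa}(\d\xx)\,P_{t}(\xx,\d\xx')$. The key move is to invoke reversibility, $\pi_{\aa}(\d\xx)P_{t}(\xx,\d\xx')=\pi_{\aa}(\d\xx')P_{t}(\xx',\d\xx)$, which lets me interchange the roles of the integration variable and the free variable and factor out $\pi_{\aa}(\d\xx')$, leaving $\psi_{t}(\pi_{\aa+\mm})(\d\xx')=\pi_{\aa}(\d\xx')\,\E^{\xx'}[h(\XX_{t},\mm)]$, where the expectation is over the WF diffusion started at $\xx'$.

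Next I would apply the duality identity \eqref{duality identity}, which for the WF model—with the deterministic component frozen at $1$, as noted before the Lemma—reads $\E^{\xx'}[h(\XX_{t},\mm)]=\E^{\mm}[h(\xx',M_{t})]$, with $M_{t}$ the death process of Lemma \ref{lemma: death transitions probabilities} started at $\mm$. Since $M_{t}$ is non-increasing and absorbed at the origin, its law at time $t$ is supported on $\{\mm-\ii:\oo\le\ii\le\mm\}$, so $\E^{\mm}[h(\xx',M_{t})]=\sum_{\oo\le\ii\le\mm}p_{\mm,\mm-\ii}(t)\,h(\xx',\mm-\ii)$ with the $p_{\mm,\mm-\ii}(t)$ given explicitly by that Lemma. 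Reinserting the factor $\pi_{\aa}(\d\xx')$ and using $h(\xx',\mm-\ii)\pi_{\aa}(\d\xx')=\pi_{\aa+\mm-\ii}(\d\xx')$ collapses the expression into the claimed finite mixture \eqref{nu-m propagation inf}.

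The only substantive input is the duality identity itself; everything else is bookkeeping. If one wants a self-contained argument rather than citing \cite{PR14}, the duality can be established through the local (generator) sufficient condition \eqref{local duality}, i.e.\ by checking that $(\Gwf h(\cdot,\mm))(\xx)$ equals the action of the death-process generator $A$ on $h(\xx,\cdot)$ at $\mm$. This reduces to a direct computation: applying \eqref{eq:WF-generator} to the monomial $\xx^{\mm}$ and matching the result, after normalising $h$, against the down-jump operator $\mm\mapsto\mm-\ee_{i}$ at rate $\tfrac12 m_{i}(\theta+|\mm|-1)$. I expect this generator computation—correctly tracking the Pochhammer normalising constants in $h$ so that the drift and diffusion terms reorganise exactly into death-process increments—to be the only delicate step; the remaining dimensional point, that $\mm\in\Z^{\infty}$ with $|\mm|<\infty$ involves only finitely many nonzero coordinates and hence an effectively finite simplex, is immediate.
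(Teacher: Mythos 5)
Your proposal is correct and follows essentially the same route as the paper's proof: the same duality function $h(\xx,\mm)=\pi_{\aa+\mm}(\d\xx)/\pi_{\aa}(\d\xx)$, the same use of reversibility to reduce $\psi_t(\pi_{\aa+\mm})$ to $\pi_{\aa}(\d\xx')\,\E^{\xx'}[h(\XX_t,\mm)]$, the same application of the duality identity with the death process of Lemma \ref{lemma: death transitions probabilities}, and the same final recombination. The generator computation you defer to the end is exactly how the paper establishes the duality, via the local condition \eqref{local duality}.
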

\begin{proof}
Define
\begin{equation}\label{WF duality function}
h(\xx,\mm)=\frac{\Gamma(\na+\norm{\mm})}{\Gamma(\na)}\prod_{i=1}^{K}\frac{\Gamma(\alpha_{i})}{\Gamma(\alpha_{i}+m_{i})}\xx^{\mm},
\end{equation}
which is in the domain of $\Gwf$. A direct computation shows that
\begin{equation}\label{eq:WF-gen-xm}
\begin{split}\notag
\Gwf h(\xx,\mm)
=&\,
\sum_{i=1}^{K}\bigg(\frac{\alpha_{i}m_{i}}{2}+\binom{m_{i}}{2}\bigg)\frac{\Gamma(\theta+|\mm|)}{\Gamma(\theta)}\prod_{j=1}^{K}\frac{\Gamma(\alpha_{j})}{\Gamma(\alpha_{j}+m_{j})}\xx^{\mm-\ee_{i}}\\
&\,-\sum_{i=1}^{K}\bigg(\frac{\theta m_{i}}{2}+\binom{m_{i}}{2}+\frac{1}{2}m_{i}\sum_{j\ne i}m_{j}\bigg)\frac{\Gamma(\theta+|\mm|)}{\Gamma(\theta)}\prod_{j=1}^{K}\frac{\Gamma(\alpha_{j})}{\Gamma(\alpha_{j}+m_{j})}\xx^{\mm}\\
=&\,\frac{\theta+|\mm|-1}{2}\sum_{i=1}^{K}m_{i}h(\xx,\mm-\ee_{i})-\frac{|\mm|(\theta+|\mm|-1)}{2}h(\xx,\mm).
\end{split}
\end{equation}
Hence, by \eqref{local duality}, the death process $M_{t}$ on $\Z^{K}$, which jumps from $\mm$ to $\mm-\ee_{i}$ at rate $m_{i}(\theta+|\mm|-1)/2$, is dual to the WF diffusion with generator $\Gwf$ with respect to \eqref{WF duality function}.
From the definition \eqref{prediction operator} of the prediction operator now we have
\begin{align*}
\psi_{t}\big(\pi_{\aa+\mm}\big)(\d\xx')
=&\,\int_{\X}h(\xx,\mm)\pi_{\aa}(\d\xx)P_{t}(\xx,\d \xx')\\
=&\,\int_{\X}h(\xx,\mm)\pi_{\aa}(\d\xx')P_{t}(\xx',\d \xx)\\
=&\,\pi_{\aa}(\d \xx')\E^{\xx'}[h(\XX_{t},\mm)]\\
=&\,\pi_{\aa}(\d \xx')\E^{\mm}[h(\xx',M_{t})]\\
=&\,\pi_{\aa}(\d \xx')\sum_{\substack{\oo \le \ii\le \mm}}p_{\mm,\mm-\ii}(t)h(\xx',\mm-\ii)\\
=&\,\sum_{\substack{\oo \le \ii\le \mm}}p_{\mm,\mm-\ii}(t)\pi_{\aa+\mm-\ii}(\d\xx')
\end{align*}
where the second equality holds in virtue of the reversibility of
$\XX_{t}$ with respect to $\pi_{\aa}$, the fourth by the duality \eqref{duality identity} established above together with \eqref{M_transition probabilities} and the fifth from Lemma \ref{lemma: death transitions probabilities}.
\end{proof}

\bigskip

The following proves the propagation step for FV processes by making use of the previous result and by exploiting the strategy outlined in Figure \ref{fig: scheme}.

\bigskip
\noindent\textbf{Proof of Theorem \ref{thm: FV propagation}}\\
\noindent Fix an arbitrary partition $(A_{1},\ldots,A_{K})$ of $\Y$ with $K$ classes, and denote by $\tilde\mm$ the multiplicities resulting from binning $\yy_{1:m}$ into the corresponding cells. Then
\begin{equation}\label{DP to dir}
\Pi_{\alpha+\sum_{i=1}^{K_{m}}m_{i}\delta_{y_{i}^{*}}}(A_{1},\ldots,A_{K})
\sim
\pi_{\alpha+\tilde\mm},
\end{equation}
where $\Pi_{\alpha+\sum_{i=1}^{K_{m}}m_{i}\delta_{y_{i}^{*}}}(A_{1},\ldots,A_{K})$ denotes the law $\Pi_{\alpha+\sum_{i=1}^{K_{m}}m_{i}\delta_{y_{i}^{*}}}(\cdot)$ evaluated on $(A_{1},\ldots,A_{K})$.
Since the projection onto the same partition of the FV process is a $K$-dimensional WF process (see Section \ref{subsec: FV-model}), from Proposition \ref{prop: PR14-WF-infty} we have
\begin{align}\label{WF tilde propagation}\nonumber
\psi_{t}\Big(&\,\Pi_{\alpha+\sum_{i=1}^{K_{m}}m_{i}\delta_{y_{i}^{*}}}(A_{1},\ldots,A_{K})\Big)
=\psi_{t}(\pi_{\alpha+\tilde\mm})
=\sum_{\substack{\nn\in L(\tilde\mm)}}p_{\tilde\mm,\nn}(t)\pi_{\alpha+\nn}.\notag
\end{align}
Furthermore, since a Dirichlet process is characterised by its finite-dimensional projections, now it suffices to show that
\begin{equation*}
\sum_{\nn\in L(\mm)}p_{\mm,\nn}(t)\Pi_{\alpha+\sum_{i=1}^{K_{m}}n_{i}\delta_{y_{i}^{*}}}(A_{1},\ldots,A_{K})
=\sum_{\substack{\nn\in L(\tilde\mm)}}p_{\tilde\mm,\nn}(t)\pi_{\alpha+\nn}
\end{equation*}
so that the operations of propagation and projection commute. Given \eqref{DP to dir}, we only need to show that the mixture weights are consistent with respect to fragmentation and merging of classes, that is
\begin{equation}\notag
\sum_{\ii\in L(\mm):\, \tilde \ii=\nn}p_{\mm,\ii}(t)=p_{\tilde\mm,\nn}(t),
\end{equation}
where $\tilde\ii$ denotes the projection of $\ii$ onto $(A_{1},\ldots,A_{K})$.
Using \eqref{transition probabilities}, the previous in turn reduces to
\begin{equation}\label{hypergeometric marginal}\nonumber
\sum_{\ii\in L(\mm):\, \tilde \ii=\nn}p(\ii;\,\mm,m-i)=p(\nn;\,\tilde\mm,m-n),
\end{equation}
which holds by the marginalization properties of the multivariate hypergeometric distribution. Cf.~\cite{JKB97}, equation 39.3.\qed

\bigskip
The last needed result to obtain the recursive representation of Proposition \ref{prop: algorithm FV} reduces now to a simple sum rearrangement. 

\bigskip
\noindent\textbf{Proof of Proposition \ref{prop: algorithm FV}}

\noindent The update operation \eqref{eq: FV algorithm update} follows directly from Lemma \ref{lem: FV update multi}.
The prediction operation \eqref{eq: FV algorithm propagation} for elements of $\F_{\Pi}$ follows from Theorem \ref{thm: FV propagation} together with the linearity of \eqref{prediction operator} and a rearrangement of the sums, so that
\begin{align*}
\psi_{t}\Bigg(&\,\sum_{\mm\in M}w_{\mm}\Pi_{\alpha+\sum_{i=1}^{K_{m}}m_{i}\delta_{y_{i}^{*}}}\Bigg)\\
=&\,\sum_{\mm\in M}w_{\mm}
\sum_{\substack{\nn\in L(\mm)} }
p_{\mm,\nn }(t)
\Pi_{\alpha+\sum_{i=1}^{K_{m}}n_{i}\delta_{y_{i}^{*}}}\\
=&\,\sum_{\nn\in L(M)}
\Bigg(\sum_{\mm\in M,\,\mm\ge \nn}w_{\mm}p_{\mm,\nn}(t)\Bigg)
\Pi_{\alpha+\sum_{i=1}^{K_{m}}n_{i}\delta_{y_{i}^{*}}}.
\end{align*}
\qed

\subsection{Computable filtering for Dawson--Watanabe processes}
\label{sec: filtering DW}

The following Lemma, used later, recalls the propagation step for one dimensional CIR processes.

\begin{lemma}\label{lemma: 1-CIR propagation}
Let $Z_{i,t}$ be a CIR process with generator \eqref{CIR generator} and invariant distribution $\mathrm{Ga}(\alpha_{i},\beta)$. Then
\begin{equation}\label{CIR propagation}\notag
\psi_{t}\big(\mathrm{Ga}(\alpha_{i}+m,\beta+\ss)\big)
=\sum\nolimits_{j=0}^{m}\mathrm{Bin}(m-j;\,m,p(t))
\mathrm{Ga}(\alpha_{i}+m-j,\beta+\SS_{t}),
\end{equation}
where
\begin{equation}\notag
p(t)=\SS_{t}/\SS_{0},\quad 
\SS_{t}=\frac{\beta \SS_{0}}{(\beta+\SS_{0})e^{\beta t/2}-\SS_{0}}, \quad \SS_{0}=\ss.
\end{equation}
\end{lemma}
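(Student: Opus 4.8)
The plan is to follow the template of the proof of Proposition \ref{prop: PR14-WF-infty} for the one-dimensional WF diffusion, now carrying along the extra deterministic dual coordinate $\Theta_t$, which for CIR is no longer constant. The starting point is to exhibit a duality function. Since the model is dominated, I take $h(z,m,\ss)$ to be the Radon--Nikodym derivative of $\mathrm{Ga}(\alpha_i+m,\beta+\ss)$ with respect to the invariant law $\mathrm{Ga}(\alpha_i,\beta)$, namely
\begin{equation*}
h(z,m,\ss)=\frac{\Gamma(\alpha_{i})}{\Gamma(\alpha_{i}+m)}\frac{(\beta+\ss)^{\alpha_{i}+m}}{\beta^{\alpha_{i}}}\,z^{m}e^{-\ss z},
\end{equation*}
so that $h(z,m,\ss)\,\mathrm{Ga}(\alpha_i,\beta)(\d z)=\mathrm{Ga}(\alpha_i+m,\beta+\ss)(\d z)$ and $\int h(z,m,\ss)\mathrm{Ga}(\alpha_i,\beta)(\d z)=1$.

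Next I verify the local duality condition \eqref{local duality}. Applying the CIR generator \eqref{CIR generator} to $z\mapsto h(z,m,\ss)$ --- a routine computation since $h$ is a monomial times an exponential --- I expect to obtain
\begin{equation*}
\A h(\cdot,m,\ss)(z)=\tfrac{1}{2}m(\beta+\ss)\big[h(z,m-1,\ss)-h(z,m,\ss)\big]-\tfrac{1}{2}\ss(\beta+\ss)\frac{\partial}{\partial \ss}h(z,m,\ss).
\end{equation*}
The right-hand side is exactly the generator of a two-component process $(M_t,\Theta_t)$ acting in the $(m,\ss)$ variables, as in Section \ref{sec: filtering and duality}: $M_t$ is a death process on $\Z$ jumping from $m$ to $m-1$ at rate $\tfrac{1}{2}m(\beta+\Theta_t)$, while $\Theta_t$ is deterministic and solves the Riccati equation $\dot\Theta_t=-\tfrac{1}{2}\Theta_t(\beta+\Theta_t)$ with $\Theta_0=\ss$. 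Integrating this ODE, elementarily via the substitution $u=e^{\beta t/2}$, yields precisely $\Theta_t=\SS_t$ as in \eqref{bernoulli parameter}. By \eqref{local duality} this establishes the duality identity \eqref{duality identity} for $h$.

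The one genuinely new point compared with the WF case is the law of the death process $M_t$, whose rate now depends on time through $\Theta_t$. The key observation is that the total rate $\tfrac12 m(\beta+\Theta_t)$ is \emph{linear} in $m$, so $M_t$ is a time-inhomogeneous pure linear death process in which each of the initial $m$ units is removed independently at instantaneous rate $\tfrac12(\beta+\Theta_t)$. Hence the number of surviving units after time $t$ is Binomial with parameters $m$ and survival probability $\exp\{-\tfrac12\int_0^t(\beta+\Theta_u)\d u\}$; using $\dot\Theta_t/\Theta_t=-\tfrac12(\beta+\Theta_t)$ this equals $\SS_t/\SS_0=p(t)$, giving $p_{m,m-j}(t)=\mathrm{Bin}(m-j;\,m,p(t))$, the coefficients appearing in \eqref{CIR propagation}. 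This is the step I expect to require the most care, since it is where the deterministic modulation of the sample-size parameter enters and departs from the WF computation.

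Finally I assemble the propagation exactly as in Proposition \ref{prop: PR14-WF-infty}. From the definition \eqref{prediction operator} of $\psi_t$, reversibility of the CIR diffusion with respect to $\mathrm{Ga}(\alpha_i,\beta)$, the duality identity \eqref{duality identity}, and the transition probabilities just obtained,
\begin{align*}
\psi_{t}\big(\mathrm{Ga}(\alpha_i+m,\beta+\ss)\big)(\d z')
&=\mathrm{Ga}(\alpha_i,\beta)(\d z')\,\E^{z'}\!\big[h(Z_{i,t},m,\ss)\big]
=\mathrm{Ga}(\alpha_i,\beta)(\d z')\,\E^{(m,\ss)}\!\big[h(z',M_t,\Theta_t)\big]\\
&=\mathrm{Ga}(\alpha_i,\beta)(\d z')\sum_{j=0}^{m}\mathrm{Bin}(m-j;\,m,p(t))\,h(z',m-j,\SS_t)\\
&=\sum_{j=0}^{m}\mathrm{Bin}(m-j;\,m,p(t))\,\mathrm{Ga}(\alpha_i+m-j,\beta+\SS_t)(\d z'),
\end{align*}
where the last line uses $h(z',m-j,\SS_t)\mathrm{Ga}(\alpha_i,\beta)(\d z')=\mathrm{Ga}(\alpha_i+m-j,\beta+\SS_t)(\d z')$ and $\Theta_t=\SS_t$. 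This is \eqref{CIR propagation}, completing the argument.
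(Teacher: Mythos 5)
Your proof is correct, but it takes a genuinely different route from the paper's: the paper disposes of this lemma in one line, citing Section 3.1 of \cite{PR14} together with the reparametrisation $\alpha=\delta/2$, $\beta=\gamma/\sigma^{2}$, $\SS_{t}=\Theta_{t}-\beta$, whereas you reconstruct that result from scratch using the duality machinery of Section \ref{sec: filtering and duality}. Your computations are in fact the one-dimensional specialisations of what the paper carries out later for the multivariate case: your duality function coincides with $h^{C}_{\alpha_i}$ of Theorem \ref{thm: multi CIR duality}, your generator identity is the $K=1$ case of the displayed computation of $\Gcir_{i}h^{C}_{i}$ there (rearranged using the expression for $\partial h/\partial\ss$), and your Riccati equation is \eqref{ODE}, whose solution indeed gives $\SS_t$ as in \eqref{bernoulli parameter}. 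The one place where you improve on the paper's own treatment is the derivation of the binomial transition probabilities: your observation that a linear death process with per-capita rate $\tfrac12(\beta+\Theta_t)$ thins the initial $m$ units independently, with survival probability $\exp\{-\tfrac12\int_0^t(\beta+\Theta_u)\,\d u\}=\SS_t/\SS_0$ via the logarithmic-derivative identity $\d(\log\SS_t)/\d t=-\tfrac12(\beta+\SS_t)$, replaces the iterated integral computation at the end of the proof of Proposition \ref{prop: K-CIR implications} by a two-line argument. The only points worth tightening are routine: the ``routine computation'' of $\Gcir_{i}h$ should actually be displayed (it is exactly the paper's expression in the proof of Theorem \ref{thm: multi CIR duality}), and passing from the local condition \eqref{local duality} to the integrated identity \eqref{duality identity} requires the same unstated regularity that the paper itself invokes in Proposition \ref{prop: PR14-WF-infty}, so you are on equal footing there.
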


\begin{proof}
It follows from Section 3.1 in \cite{PR14} by letting $\alpha=\delta/2$,  $\beta=\gamma/\sigma^{2}$ and $\SS_{t}=\Theta_{t}-\beta$.
\end{proof}

As preparatory for proving the  main result on DW processes, assume the signal $\ZZ_{t}=(Z_{1,t},\ldots,Z_{K,t})$ is a vector of independent CIR components $Z_{i,t}$ each with generator 
\begin{equation}\label{CIR generator}
\Gcir_{i}f(z_{i})=\frac{1}{2}(\alpha_{i}-\beta z_{i})f'(z_{i})+\frac{1}{2}z_{i}f''(z_{i}),
\end{equation}
acting on $C^{2}([0,\infty))$ functions which vanish at infinity. See \cite{KW71}.
The next proposition identifies the dual process for $\ZZ_{t}$.

\begin{theorem}\label{thm: multi CIR duality}
Let $Z_{i,t}$, $i=1,\ldots,K$, be independent CIR processes each with generator \eqref{CIR generator} parametrised by $(\alpha_i,\beta)$, respectively. For $\aa\in\R_{+}^{K}$ and $\theta=|\aa|$, define $h_{\alpha_{i}}^{C}:\R_{+}\times\Z\times\R_{+}$ as
\begin{equation}\label{CIR duality function}\notag
h_{\alpha_{i}}^{C}(z,m,\ss)=
\frac{\Gamma(\alpha_{i})}{\Gamma(\alpha_{i}+m)}
\bigg(\frac{\beta+\ss}{\beta}\bigg)^{\alpha_{i}}
(\beta+\ss)^{m}z^{m}e^{-\ss z}.
\end{equation}
Let also $h^{W}:\R_{+}^{K}\times\Z^{K}$ be as in \eqref{WF duality function} and define $h:\R_{+}^{K}\times\Z^{K}\times\R_{+}$ as
\begin{equation}\label{thm: multi CIR duality function}\notag
h(\zz,\mm,\ss)=h_\theta^{C}(|\zz|,\norm{\mm},\ss)h^{W}(\xx,\mm),
\end{equation}
where $\xx=\zz/|\zz|$.
Then the joint process $\{(Z_{1,t},\ldots,Z_{K,t}),t\ge0\}$ is dual, in the sense of \eqref{duality identity}, to the process $\{(\mathbf{M}_{t},\SS_{t}),t\ge0\}\subset\Z^{K}\times\R_{+}$ with generator
\begin{equation}\label{K-CIR dual generator}
\begin{split}
\Gn g(\mm,\ss)
  =&\, \frac{1}{2}\norm{\mm}(\beta+\ss) \sum_{i=1}^{K} \frac{m_i}{\norm{\mm}}
[   g(\mm-\ee_{i},\ss)-g(\mm,\ss)]\\
&\,-\frac{1}{2}\ss(\beta+\ss)   \frac{\partial g(\mm,\ss)}{\partial \ss}
\end{split}
\end{equation}
with respect to $h(\zz,\mm,\ss)$.
\end{theorem}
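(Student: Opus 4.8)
The plan is to verify the local duality identity \eqref{local duality} with signal generator $\A=\sum_{i=1}^{K}\Gcir_{i}$, the sum of the independent CIR generators \eqref{CIR generator} (each $\Gcir_{i}$ differentiating only in $z_{i}$), and dual generator $\Gn$ as in \eqref{K-CIR dual generator}; by the criterion recalled in Section \ref{sec: filtering and duality} this local identity is sufficient for the global duality \eqref{duality identity}. The crucial preliminary observation is that the duality function factorises as a product of univariate CIR duality functions,
\[
h(\zz,\mm,\ss)=\prod_{i=1}^{K}h_{\alpha_{i}}^{C}(z_{i},m_{i},\ss).
\]
This follows by writing $\xx^{\mm}=\zz^{\mm}/|\zz|^{\norm{\mm}}$ in $h^{W}$, so that the $|\zz|^{\norm{\mm}}$ and $\Gamma(\theta+\norm{\mm})/\Gamma(\theta)$ factors carried by $h_{\theta}^{C}(|\zz|,\norm{\mm},\ss)$ cancel, and by splitting $((\beta+\ss)/\beta)^{\theta}=\prod_{i}((\beta+\ss)/\beta)^{\alpha_{i}}$ using $\theta=\sum_{i}\alpha_{i}$. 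This factorisation reduces the whole computation to the one-dimensional case.

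Next I would record the univariate local duality for a single CIR component, namely that for each $i$
\[
\Gcir_{i}h_{\alpha_{i}}^{C}(\cdot,m,\ss)(z)=\tfrac{1}{2}m(\beta+\ss)\big[h_{\alpha_{i}}^{C}(z,m-1,\ss)-h_{\alpha_{i}}^{C}(z,m,\ss)\big]-\tfrac{1}{2}\ss(\beta+\ss)\frac{\partial h_{\alpha_{i}}^{C}(z,m,\ss)}{\partial \ss}.
\]
This is the content underlying Lemma \ref{lemma: 1-CIR propagation} and follows from \cite{PR14}; it can also be checked directly. Writing $h_{\alpha_{i}}^{C}(z,m,\ss)=K_{m,\ss}\,z^{m}e^{-\ss z}$ with $K_{m,\ss}=\tfrac{\Gamma(\alpha_{i})}{\Gamma(\alpha_{i}+m)}((\beta+\ss)/\beta)^{\alpha_{i}}(\beta+\ss)^{m}$, applying $\Gcir_{i}$ yields a combination of $z^{m-1}$, $z^{m}$ and $z^{m+1}$ (all times $e^{-\ss z}$); using the ratios $K_{m-1,\ss}/K_{m,\ss}=(\alpha_{i}+m-1)/(\beta+\ss)$ and $\partial_{\ss}K_{m,\ss}=K_{m,\ss}(\alpha_{i}+m)/(\beta+\ss)$, the $z^{m-1}$ term reproduces the death increment, while the $z^{m}$ and $z^{m+1}$ terms recombine precisely into the $\ss$-derivative term.

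I would then assemble the multivariate identity. Since $\A=\sum_{i}\Gcir_{i}$ acts factor by factor, the product structure gives $\A h=\sum_{i}\big(\Gcir_{i}h_{\alpha_{i}}^{C}(\cdot,m_{i},\ss)\big)\prod_{j\ne i}h_{\alpha_{j}}^{C}(z_{j},m_{j},\ss)$. Substituting the univariate identity, the death contributions collect into $\tfrac{1}{2}(\beta+\ss)\sum_{i}m_{i}\big[h(\zz,\mm-\ee_{i},\ss)-h(\zz,\mm,\ss)\big]$, because $h_{\alpha_{i}}^{C}(z_{i},m_{i}-1,\ss)\prod_{j\ne i}h_{\alpha_{j}}^{C}(z_{j},m_{j},\ss)=h(\zz,\mm-\ee_{i},\ss)$; this is exactly the jump part of $\Gn$ in \eqref{K-CIR dual generator}, since $\tfrac{1}{2}\norm{\mm}(\beta+\ss)\sum_{i}(m_{i}/\norm{\mm})[\cdots]=\tfrac{1}{2}(\beta+\ss)\sum_{i}m_{i}[\cdots]$. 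The $\ss$-derivative contributions combine by the Leibniz rule into $-\tfrac{1}{2}\ss(\beta+\ss)\sum_{i}\partial_{\ss}h_{\alpha_{i}}^{C}\prod_{j\ne i}h_{\alpha_{j}}^{C}=-\tfrac{1}{2}\ss(\beta+\ss)\,\partial_{\ss}h(\zz,\mm,\ss)$, the deterministic part of $\Gn$. Hence $(\A h(\cdot,\mm,\ss))(\zz)=(\Gn h(\zz,\cdot,\cdot))(\mm,\ss)$, which is \eqref{local duality}, and the global duality \eqref{duality identity} follows.

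The step I expect to demand the most care is the univariate generator computation, i.e.\ checking that the three monomials produced by $\Gcir_{i}h_{\alpha_{i}}^{C}$ recombine with exactly the right coefficients into a death increment plus an $\ss$-derivative; once this is in place the multivariate step is essentially bookkeeping, the only subtlety being the correct treatment of the $\ss$-derivative across all $K$ factors via the product rule. A minor technical point is the domain: for $\ss>0$ the functions $h_{\alpha_{i}}^{C}(\cdot,m,\ss)$ decay exponentially and lie in the domain of $\Gcir_{i}$, so the local criterion legitimately applies.
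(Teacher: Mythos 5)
Your proof is correct and follows essentially the same route as the paper: both arguments rest on the factorisation $h(\zz,\mm,\ss)=\prod_{i}h_{\alpha_{i}}^{C}(z_{i},m_{i},\ss)$, a direct computation of $\Gcir_{i}h_{\alpha_{i}}^{C}$, and verification of the local criterion \eqref{local duality}. The only (cosmetic) difference is that you absorb the $m_{i}+1$ ``birth'' terms into the $\ss$-derivative at the univariate level and then sum via the Leibniz rule, whereas the paper keeps the three-term expansion of $\Gcir_{i}h_{i}^{C}$ explicit, sums over $i$, and only then recombines the $h(\zz,\mm+\ee_{i},\ss)$ terms into $\partial_{\ss}h(\zz,\mm,\ss)$.
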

\begin{proof}
Throughout the proof, for ease of notation we will write $h_i^C$ instead of $h_{\alpha_i}^C$. Note first that for all $\mm\in \Z^{K}$ we have
\begin{equation}\label{composition duality function}
\prod_{i=1}^{K}h_i^{C}(z_{i},m_{i},\ss)
=h_{\theta}^{C}(\norm{z},\norm{\mm},\ss)h^{W}(\xx,\mm),
\end{equation}
where $x_{i}=z_{i}/\norm{z}$,
which follows from direct computation by multiplying and dividing by the correct ratios of gamma functions and by writing $\prod_{i=1}^{K}z_{i}^{m_{i}}=|z|^{m}
\prod_{i=1}^{K}x_{i}^{m_{i}}$.
We show the result for $K=2$, from which the statement for general $K$ case follows easily.
From the independence of the CIR processes, the generator $(Z_{1,t},Z_{2,t})$ applied to the left hand side of \eqref{composition duality function} is
\begin{align}\label{2CIR generator}
(\Gcir_{1}+\Gcir_{2})h^{C}_{1}h^{C}_{2}
=&\,h^{C}_{2}\Gcir_{1}h^{C}_{1}+h^{C}_{1}\Gcir_{2}h^{C}_{2}.
\end{align}
A direct computation shows that
\begin{align*}
\Gcir_{i}h^{C}_{i}
=&\,
\frac{m_{i}}{2}(\beta+\ss)h^{C}_{i}(z_{i},m_{i}-1,\ss)
+\frac{\ss}{2}(\alpha_{i}+m_{i})h^{C}_{i}(z_{i},m_{i}+1,\ss)\notag\\
&\,-\frac{1}{2}(\ss(\alpha_{i}+m_{i})+m_{i}(\beta+\ss))h^{C}_{i}(z_{i},m_{i},\ss).
\end{align*}
Substituting in the right hand side of \eqref{2CIR generator} and collecting terms with the same coefficients gives
\begin{align*}
&\frac{\beta+\ss}{2}\Big[m_{1}h^{C}_{1}(z_{1},m_{1}-1,\ss)h^{C}_{2}(z_{2},m_{2},\ss)
 +m_{2}h^{C}_{1}(z_{1},m_{1},\ss)h^{C}_{2}(z_{2},m_{2}-1,\ss)\Big]\\
&\,
+ \frac{\ss}{2}\Big[(\alpha_{1}+m_{1})h^{C}_{1}(z_{1},m_{1}+1,\ss)h^{C}_{2}(z_{2},m_{2},\ss)\\
&\,\quad \quad  +(\alpha_{2}+m_{2})h^{C}_{1}(z_{1},m_{1},\ss)h^{C}_{2}(z_{2},m_{2}+1,\ss)\Big]\\
&\,-\frac{1}{2}(\ss(\alpha+m)+m(\beta+\ss))h^{C}_{1}(z_{1},m_{1},\ss)h^{C}_{2}(z_{2},m_{2},\ss)\notag
\end{align*}
with $\alpha=\alpha_{1}+\alpha_{2}$ and $m=m_{1}+m_{2}$.
From \eqref{composition duality function} we now have
\begin{align}\notag
&\,
 \frac{\beta+\ss}{2}h_\theta^{C}(|z|,m-1,\ss)\Big[
 m_{1}h^{W}(\xx,\mm-\ee_{1},\ss)
 +m_{2}h^{W}(\xx,\mm-\ee_{2},\ss)\Big]\\
&\,
+ \frac{\ss}{2} h_\theta^{C}(|z|,m+1,\ss)\Big[(\alpha_{1}+m_{1})h^{W}(\xx,\mm+\ee_{1},\ss)\notag\\
&\,\quad \quad +(\alpha_{2}+m_{2})h^{W}(\xx,\mm+\ee_{2},\ss)\Big]\notag\\
&\,-\frac{1}{2}(\ss(\alpha+m)+m(\beta+\ss))
h_\theta^{C}(|z|,m,\ss)h^{W}(\xx,\mm,\ss)\notag.
\end{align}
Then
\begin{align}\label{gen for duality check}
\begin{split}
(\Gcir_{1}+&\,\Gcir_{2})h^{C}_{1}h^{C}_{2}\\
=&\,
\frac{\beta+\ss}{2}\Big[m_{1}h(\zz,\mm-\ee_{1},\ss)+m_{2}h(\zz,\mm-\ee_{2},\ss)\Big]\\
&\,
+ \frac{\ss}{2}\Big[(\alpha_{1}+m_{1})h(\zz,\mm+\ee_{1},\ss)+(\alpha_{2}+m_{2})h(\zz,\mm+\ee_{2},\ss)\Big]\\
&\,-\frac{1}{2}(\ss(\alpha+m)+m(\beta+\ss))
h(\zz,\mm,\ss).
\end{split}\end{align}
Noting now that
\begin{align*}
\frac{\partial}{\partial \ss}h(\zz,\mm,\ss)
=&\,\frac{\alpha+m}{\beta+\ss}
h(\zz,\mm,\ss)
-\frac{\alpha_{1}+m_{1}}{\beta+\ss}h(\zz,\mm+\ee_{1},\ss)\\
&\,-\frac{\alpha_{2}+m_{2}}{\beta+\ss}h(\zz,\mm+\ee_{2},\ss),
\end{align*}
an application of \eqref{K-CIR dual generator} on $h(\zz,\mm,\ss)$
shows that $(\Gn h(\zz,\cdot,\cdot))(\mm,\ss)$ equals the right hand side of \eqref{gen for duality check},
so that \eqref{local duality} holds, giving the result.
\end{proof}

The previous Theorem extends the gamma-type duality showed for one dimensional CIR processes in \cite{PR14}. Although the components of $\ZZ_{t}$ are independent, the result is not entirely trivial. Indeed the one-dimensional CIR process is dual to a two-components process given by a one-dimensional death process and a one-dimensional deterministic dual. The previous result shows that $K$ independent CIR processes have dual not given by a $K$ independent versions of the CIR dual, but by a death process on $\Z^{K}$ modulated by a single deterministic process.
Specifically, here the dual component $\textbf{M}_{t}$ is a $K$-dimensional death process on $ \Z^{K}$ which, conditionally on $\SS_{t}$, jumps from $\mm$ to $\mm-\ee_{i}$ at rate $2m_{i}(\beta+\SS_{t})$, and $\SS_{t}\in \R_{+}$ is a nonnegative deterministic process driven by the logistic type differential equation
\begin{equation}\label{ODE}
\frac{\d \SS_{t}}{\d t}=-\frac{1}{2}\SS_{t}(\beta+\SS_{t}).
\end{equation}
The next Proposition formalises the propagation step for multivariate CIR processes. Denote by $\textbf{Ga}(\aa,\beta)$ the product of gamma distributions $$\text{Ga}(\alpha_{1},\beta)\times\cdots\times\text{Ga}(\alpha_{K},\beta),$$ with $\aa=(\alpha_{1},\ldots,\alpha_{K})$.


\begin{proposition}\label{prop: K-CIR implications}
Let $\{(Z_{1,t},\ldots,Z_{K,t}),t\ge0\}$ be as in Theorem \ref{thm: multi CIR duality}. Then
\begin{align}\label{multi CIR propagation}
\psi_{t}\big(&\mathbf{Ga}(\aa+\mm,\beta+\ss)\big)=\\
=&\,\sum_{i=0}^{\norm{\mm}}\mathrm{Bin}(\norm{\mm}-i;\,\norm{\mm},p(t))\mathrm{Ga}(\na+\norm{\mm}-i,\beta+\SS_{t})\notag\\
&\,\times\sum_{\substack{\oo \le \ii\le \mm, |\ii|=i}}p(\ii;\, \mm,i)
\pi_{\aa+\mm-\ii},\notag
\end{align}
where $\mathrm{Bin}(\norm{\mm}-i;\,\norm{\mm},p(t))$ and $p(\ii;\, \mm,i)$ are as in \eqref{tilde pmn}.
\end{proposition}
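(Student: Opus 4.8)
The plan is to follow the template of the proof of Proposition \ref{prop: PR14-WF-infty}, now driving everything through the multivariate CIR duality just established in Theorem \ref{thm: multi CIR duality}. First I would record that the duality function plays the role of a Radon--Nikodym derivative relative to the invariant law: a direct check gives $h_{\alpha_i}^{C}(\cdot,m_i,\ss)\,\mathrm{Ga}(\alpha_i,\beta)=\mathrm{Ga}(\alpha_i+m_i,\beta+\ss)$ for each coordinate, so by the product identity \eqref{composition duality function} one has $\mathbf{Ga}(\aa+\mm,\beta+\ss)(\d\zz)=h(\zz,\mm,\ss)\,\mathbf{Ga}(\aa,\beta)(\d\zz)$. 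Applying the prediction operator \eqref{prediction operator} and using that $\ZZ_t$ is reversible with respect to the product measure $\mathbf{Ga}(\aa,\beta)$ (a consequence of componentwise CIR reversibility and independence), I would obtain $\psi_t(\mathbf{Ga}(\aa+\mm,\beta+\ss))(\d\zz')=\mathbf{Ga}(\aa,\beta)(\d\zz')\,\E^{\zz'}[h(\ZZ_t,\mm,\ss)]$, and then invoke the duality identity \eqref{duality identity} to rewrite the forward expectation as the dual expectation $\mathbf{Ga}(\aa,\beta)(\d\zz')\,\E^{(\mm,\ss)}[h(\zz',\mathbf{M}_t,\SS_t)]$.

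The crux is then to compute the law of the dual death process $\mathbf{M}_t$, whose jump rates in \eqref{K-CIR dual generator} are modulated by the single shared deterministic $\SS_t$. Since $\SS_t$ is deterministic, I would represent $\mathbf{M}_t$ through $|\mm|$ independent ``items'', $m_j$ of type $j$, each killed at the common time-inhomogeneous rate $\tfrac{1}{2}(\beta+\SS_u)$; this reproduces the type-$j$ jump rate $\tfrac{1}{2} m_j(\beta+\SS_t)$. Each item then survives to time $t$ independently with probability $\exp(-\int_0^t\tfrac{1}{2}(\beta+\SS_u)\,\d u)$, which, integrating the logistic equation \eqref{ODE}, equals $\SS_t/\SS_0=p(t)$, consistent with \eqref{bernoulli parameter} and with the one-dimensional total-level statement of Lemma \ref{lemma: 1-CIR propagation}. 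Hence the total number of deaths is Binomial and, conditionally on it, the allocation of deaths across types is a uniform random subset, i.e.\ multivariate hypergeometric, giving the factorization $\Pr[\mathbf{M}_t=\mm-\ii]=\mathrm{Bin}(|\mm|-|\ii|;\,|\mm|,p(t))\,p(\ii;\,\mm,|\ii|)$ with $p(\ii;\,\mm,|\ii|)$ as in \eqref{hypergeometric}. Equivalently this factorization is the combinatorial identity matching the product $\prod_i\mathrm{Bin}(m_i-i_i;\,m_i,p(t))$ against $\mathrm{Bin}\times$ hypergeometric.

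To conclude, I would expand $\E^{(\mm,\ss)}[h(\zz',\mathbf{M}_t,\SS_t)]=\sum_{\oo\le\ii\le\mm}\Pr[\mathbf{M}_t=\mm-\ii]\,h(\zz',\mm-\ii,\SS_t)$ and convert each summand back to a distribution via the gamma--Dirichlet algebra \eqref{gamma-dirichlet algebra}. Writing $\mathbf{Ga}(\aa,\beta)$ in the variables $(|\zz'|,\xx')$, with $\xx'=\zz'/|\zz'|$, as the product $\mathrm{Ga}(\theta,\beta)\times\pi_{\aa}$, the factorization $h=h_\theta^{C}\,h^{W}$ separates the update into $h_\theta^{C}(|\zz'|,|\mm|-|\ii|,\SS_t)\,\mathrm{Ga}(\theta,\beta)=\mathrm{Ga}(\theta+|\mm|-|\ii|,\beta+\SS_t)$ on the total mass and $h^{W}(\xx',\mm-\ii)\,\pi_\aa=\pi_{\aa+\mm-\ii}$ on the simplex, the latter exactly as in Proposition \ref{prop: PR14-WF-infty} via \eqref{WF duality function}. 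Regrouping the lattice sum by the level $i=|\ii|$ then yields the stated mixture. The hard part will be justifying the product factorization of the dual transitions rigorously: because Theorem \ref{thm: multi CIR duality} shows the dual is genuinely coupled through $\SS_t$ rather than a product of independent one-dimensional CIR duals, one must verify that the shared modulation affects only the common level process and leaves the within-level allocation exchangeable, which is what makes the Binomial and hypergeometric factors decouple.
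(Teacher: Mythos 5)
Your proof is correct, but it takes a genuinely different route from the paper's. The paper never actually invokes the duality identity of Theorem \ref{thm: multi CIR duality} in its proof of this proposition: it factorises $\psi_{t}\big(\mathbf{Ga}(\aa+\mm,\beta+\ss)\big)$ over the $K$ independent coordinates, applies the one-dimensional CIR propagation of Lemma \ref{lemma: 1-CIR propagation} to each factor, and then regroups the resulting product of Binomial mixtures of gammas using (i) the identity expressing a product of Binomials as a Binomial times a multivariate hypergeometric and (ii) the gamma--Dirichlet algebra \eqref{gamma-dirichlet algebra}; the explicit Binomial form of the level transitions is then re-derived separately by iterating the jump-probability computation for the subordinated death process. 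You instead run the exact analogue of the argument in Proposition \ref{prop: PR14-WF-infty}: identify $h(\zz,\mm,\ss)$ as the density of $\mathbf{Ga}(\aa+\mm,\beta+\ss)$ with respect to $\mathbf{Ga}(\aa,\beta)$, use reversibility plus the duality identity \eqref{duality identity} to reduce the propagation to the transition law of $(\mathbf{M}_{t},\SS_{t})$, and compute that law via an independent-killing representation. Both the representation and the resulting factorisation are sound: since $\SS_{t}$ is deterministic and autonomous, conditionally on its path $\mathbf{M}_{t}$ is a time-inhomogeneous pure death process whose per-item hazard $\tfrac{1}{2}(\beta+\SS_{u})$ is common to all types, so the $\norm{\mm}$ items die i.i.d., the survivor count is Binomial, and the within-level allocation is multivariate hypergeometric by exchangeability — which is precisely the combinatorial identity the paper uses in the other direction. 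Your observation that integrating the logistic ODE \eqref{ODE} gives the survival probability $\exp\big(-\tfrac{1}{2}\int_{0}^{t}(\beta+\SS_{u})\,\d u\big)=\SS_{t}/\SS_{0}=p(t)$ directly is in fact slicker than the paper's explicit solution of the ODE followed by the iterated jump computation. What your route buys is a self-contained, unified treatment that makes Theorem \ref{thm: multi CIR duality} do real work and mirrors the WF case; what the paper's route buys is economy, since it leans on the already-established one-dimensional result and avoids having to justify the reversibility/interchange step and the identification of the dual transition function in the multivariate setting.
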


\begin{proof}
From independence we have
\begin{align*}
\psi_{t}\big(\mathbf{Ga}(\aa+\mm,\beta+\ss)\big)
=&\,
\prod_{i=1}^{K}\psi_{t}\big(\mathrm{Ga}(\alpha_{i}+m_{i},\beta+\ss)\big).
\end{align*}
Using Lemma \ref{lemma: 1-CIR propagation} in the Appendix, the previous equals
\begin{align*}
\prod_{i=1}^{K}&\sum_{j=0}^{m_{i}}\mathrm{Bin}(m_{i}-j;\,m_{i},p(t))
\mathrm{Ga}(\alpha_{i}+m_{i}-j,\beta+\SS_{t})\\
=&\,
\sum_{i_{1}=0}^{m_{1}}\mathrm{Bin}(m_{1}-i_{1};\,m_{1},p(t))
\mathrm{Ga}(\alpha_{1}+m_{1}-i_{1},\beta+\SS_{t})\\
&\,\times\cdots\times
\sum_{i_{K}=0}^{m_{K}}\mathrm{Bin}(m_{K}-i_{K};\,m_{K},p(t))
\mathrm{Ga}(\alpha_{K}+m_{K}-i_{K},\beta+\SS_{t}).
\end{align*}
Using now the fact that a product of Binomials equals the product of a Binomial and an hypergeometric distribution, we have
\begin{equation*}
\sum_{i=0}^{\norm{\mm}}\mathrm{Bin}(\norm{\mm}-i;\,\norm{\mm},p(t))\sum_{\substack{\oo \le \ii\le \mm, |\ii|=i}}p(\ii;\, \mm,i)
\prod_{j=1}^{K}\mathrm{Ga}(\alpha_{j}+m_{j}-i_{j},\beta+\SS_{t})
\end{equation*}
which, using \eqref{gamma-dirichlet algebra}, yields \eqref{multi CIR propagation}. Furthermore, \eqref{bernoulli parameter} is obtained by solving \eqref{ODE} and by means of the following argument. The one dimensional death process that drives $|\textbf{M}_{t}|$ in Theorem \ref{thm: multi CIR duality}, jumps from $\norm{\mm}$ to $\norm{\mm}-1$ at rate $\norm{\mm}(\beta+S_{t})/2$, see \eqref{K-CIR dual generator}. The probability that $|\textbf{M}_{t}|$ remains in $\norm{\mm}$ in $[0,t]$ if it is in $\norm{\mm}$ at time 0, here denoted $P(\norm{\mm}\mid \norm{\mm},S_{t})$, is then
\begin{equation*}
P(\norm{\mm}\mid \norm{\mm},S_{t})=\exp\bigg\{-\frac{\norm{\mm}}{2}\int_{0}^{t}(\beta+S_{u})\d u\bigg\}=\left(\frac{\beta}{(\beta+\ss)e^{\beta t/2}-\ss}\right)^{\norm{\mm}}.
\end{equation*}
The probability of a jump from $\norm{\mm}$ to $\norm{\mm}-1$ occurring in $[0,t]$ is
\begin{align*}
P&\,(\norm{\mm}-1\mid \norm{\mm},S_{t})\\
=&\!\int_{0}^{t}\!
\exp\bigg\{-\frac{\norm{\mm}}{2}\int_{0}^{s}(\beta+S_{u})\d u\bigg\}
\frac{\norm{\mm}}{2}S_{s}\\
&\,\times\exp\bigg\{-\frac{\norm{\mm}-1}{2}\int_{s}^{t}(\beta+S_{u})\d u\bigg\}\d s\\
=&\,\frac{\norm{\mm}}{2}
\exp\bigg\{-\frac{\norm{\mm}}{2}\int_{0}^{t}(\beta+S_{u})\d u\bigg\}\\
&\,\times\int_{0}^{t}S_{s}
\exp\bigg\{\bigg(\frac{\norm{\mm}}{2}-\frac{\norm{\mm}-1}{2}\bigg)\int_{s}^{t}(\beta+S_{u})\d u\bigg\}\d s\\
=&\,\norm{\mm}
\exp\bigg\{-\frac{\norm{\mm}}{2}\int_{0}^{t}(\beta+S_{u})\d u\bigg\}\\
&\,\times\bigg(1-\exp\bigg\{\bigg(\frac{\norm{\mm}}{2}-\frac{\norm{\mm}-1}{2}\bigg)\int_{0}^{t}(\beta+S_{u})\d u\bigg\}\bigg)\\
=&\,\norm{\mm}
\bigg(\exp\bigg\{-\frac{\norm{\mm}}{2}\int_{0}^{t}(\beta+S_{u})\d u\bigg\}\\
&\,\quad \quad -\exp\bigg\{-\frac{\norm{\mm}-1}{2}\int_{0}^{t}(\beta+S_{u})\d u\bigg\}\bigg)\\
=&\,\norm{\mm}\left(\frac{\beta}{(\beta+\ss)e^{\beta t/2}-\ss}\right)^{\norm{\mm}-1}\left(1-\frac{\beta}{(\beta+\ss)e^{\beta t/2}-\ss}\right).
\end{align*}
Iterating the argument leads to conclude that the death process jumps from $\norm{\mm}$ to $\norm{\mm}-i$ in $[0,t]$ with probability $\text{Bin}(\norm{\mm}-i\mid \norm{\mm},p(t))$.
\end{proof}

Note that when $s\in\N$, $\text{Ga}(\alpha_{i}+m,\beta+\ss)$ is the posterior of a $\text{Ga}(\alpha_{i},\beta)$ prior given $s$ Poisson observations with total count $m$. Hence the dual component $M_{i,t}$ is interpreted as the sum of the observed values of type $i$, and $\SS_{t}\subset\R_{+}$ as a continuous version of the sample size.
In particular, \eqref{multi CIR propagation} shows that a multivariate CIR propagates a vector of gamma distributions into a mixture whose kernels factorise into a gamma and a Dirichlet distribution, and whose mixing weights are driven by a one-dimensional death process with Binomial transitions together with hypergeometric probabilities for allocating the masses.

The following Proof of the conjugacy for mixtures of gamma random measures is due to \cite{L82} and outlined here for the ease of the reader. 

\bigskip
\noindent \textbf{Proof of Lemma \ref{prop: DW update}}

%
\noindent  Since $z_{\mm}:=(z\mid \mm)\sim\Gamma^{\beta+\ss}_{\alpha+\sum_{i=1}^{K_{m}}m_{i}\delta_{y_{i}^{*}}}$, from \eqref{poisson data} we have
\begin{equation}\notag
y_{m+1},\ldots,y_{n}\mid z,\mm,n\overset{iid}{\sim}z_{\mm}/|z_{\mm}|,\quad \quad
n\mid z_{\mm}\sim\text{Po}(|z_{\mm}|).
\end{equation}
Using \eqref{gamma-dirichlet algebra} we have
\begin{align*}
\Gamma^{\beta+\ss}_{\alpha+\sum_{i=1}^{K_{m}}m_{i}\delta_{y_{i}^{*}}}
=
\text{Ga}(\theta+\norm{\mm},\beta+s)\Pi_{\alpha+\sum_{i=1}^{K_{m}}m_{i}\delta_{y_{i}^{*}}},
\end{align*}
that is $|z_{\mm}|$ and $z_{\mm}/|z_{\mm}|$ are independent with $\text{Ga}(\theta+\norm{\mm},\beta+s)$ and $\Pi_{\alpha+\sum_{i=1}^{K_{m}}m_{i}\delta_{y_{i}^{*}}}$ distribution respectively. Then we have
\begin{equation*}
z_{\mm}\mid \yy_{m+1:m+n}\sim \text{Ga}(\theta+|\nn|,\beta+s+1)\Pi_{\alpha+\sum_{i=1}^{K_{m+n}}n_{i}\delta_{y_{i}^{*}}}=
\Gamma^{\beta+\ss+1}_{\alpha+\sum_{i=1}^{K_{m+n}}n_{i}\delta_{y_{i}^{*}}}
\end{equation*}
where $\nn$ are the multiplicities of the distinct values in $\yy_{1:n}$.
Finally, by the independence of $|z_{\mm}|$ and $z_{\mm}/|z_{\mm}|$, the conditional distribution of the mixing measure follows by the same argument used in Proposition \ref{lem: FV update multi}.\qed

\bigskip
We are now ready to prove the main result for DW processes. 

\bigskip
\noindent \textbf{Proof of Theorem \ref{thm: DW propagation}}

Fix a partition $(A_{1},\ldots,A_{K})$ of $\Y$. Then by Proposition \ref{prop: K-CIR implications}
\begin{align*}
\psi_{t}\Big(&\Gamma^{\beta+\ss}_{\alpha+\sum_{i=1}^{K_{m}}m_{i}\delta_{y_{i}^{*}}}(A_{1},\ldots,A_{K})\Big)\\
=&\,\sum_{i=0}^{\norm{\mm}}\mathrm{Bin}(\norm{\mm}-i;\,\norm{\mm},p(t))\mathrm{Ga}(\na+\norm{\mm}-i,\beta+\SS_{t})\\
&\,\times\sum_{\substack{\oo \le \ii\le \tilde\mm, |\ii|=i}}p(\ii;\, \tilde\mm,i) \pi_{\aa+\tilde\mm-\ii},
\end{align*}
where $\Gamma^{\beta+\ss}_{\alpha+\sum_{i=1}^{K_{m}}m_{i}\delta_{y_{i}^{*}}}(A_{1},\ldots,A_{K})$ denotes $\Gamma^{\beta+\ss}_{\alpha+\sum_{i=1}^{K_{m}}m_{i}\delta_{y_{i}^{*}}}(\cdot)$ evaluated on $(A_{1},\linebreak\ldots,A_{K})$ and $\tilde\mm$ are the multiplicities yielded by the projection of $\mm$ onto $(A_{1},\ldots,A_{K})$.
Use now \eqref{gamma-dirichlet algebra} and \eqref{tilde pmn} to write the right hand side of \eqref{gamma propagation} as
\begin{align}\notag
\sum_{\nn\in L(\mm)}&
\tilde p_{\mm,\nn}(t)
\Gamma^{\beta+\SS_{t}}_{\alpha+\sum_{i=1}^{K_{m}}n_{i}\delta_{y_{i}^{*}}}\\
=&\,\sum_{i=0}^{\norm{\mm}}\mathrm{Bin}(\norm{\mm}-i;\,\norm{\mm},p(t))\text{Ga}(\na+\norm{\mm}-i,\beta+\SS_{t})\notag\\
&\,\times\sum_{\substack{\oo \le \nn\le \mm, |\nn|=i}}p(\nn;\, \mm,i)
\Pi_{\alpha+\sum_{j=1}^{K_{m}}(m_{j}-n_{j})\delta_{y_{j}^{*}}}.\notag
\end{align}
Since the inner sum is the only term which depends on multiplicities and  since Dirichlet processes are characterised by their finite-dimensional projections, we are only left to show that
\begin{align*}
\sum_{\substack{\oo \le \nn\le \mm, |\nn|=i}}&\,p(\nn;\, \mm,i)
\Pi_{\alpha+\sum_{j=1}^{K_{m}}(m_{j}-n_{j})\delta_{y_{j}^{*}}}(A_{1},\ldots,A_{K})\\
=&\,\sum_{\substack{\oo \le \ii\le \tilde\mm, |\ii|=i}}p(\ii;\, \tilde\mm,i) \pi_{\aa+\tilde\mm-\ii}
\end{align*}
which, in view of \eqref{DP to dir}, holds if
\begin{equation*}
\sum_{\substack{\oo \le \nn\le \mm: \tilde\nn=\ii}}p(\ii;\, \mm,i)
=p(\ii;\, \tilde\mm,i),
\end{equation*}
where $\tilde\nn$ denotes the projection of $\nn$ onto $(A_{1},,\ldots,A_{K})$.
This is the consistency with respect to merging of classes of the multivariate hypergeometric distribution, and so the result now follows by the same argument at the end of the proof of Theorem \ref{thm: FV propagation}.
%
\qed

\bigskip
We conclude by proving the recursive representation of Proposition \ref{prop: algorithm FV}, whose argument is analogous to the FV case.

\bigskip
\noindent 
\textbf{Proof of Proposition \ref{prop: algorithm DW}}\\
The update operation \eqref{DW algorithm update} follows directly from Lemma \ref{lem: FV update multi}.
The prediction operation \eqref{eq: FV algorithm propagation} for elements of $\F_{\Pi}$ follows from Theorem \ref{thm: DW propagation} together with the linearity of \eqref{prediction operator} and a rearrangement of the sums, so that
\begin{align*}
\psi_{t}\Bigg(&\,\sum_{\mm\in M}w_{\mm}\Gamma^{\beta+\ss}_{\alpha+\sum_{i=1}^{K_{m}}m_{i}\delta_{y_{i}^{*}}}\Bigg)\\
=&\,\sum_{\mm\in M}w_{\mm}
\sum_{\substack{\nn\in L(\mm)} }
p_{\mm,\nn }(t)
\Gamma^{\beta+\SS_{t}}_{\alpha+\sum_{i=1}^{K_{m}}m_{i}\delta_{y_{i}^{*}}}\\
=&\,\sum_{\nn\in L(M)}
\Bigg(\sum_{\mm\in M,\,\mm\ge \nn}w_{\mm}p_{\mm,\nn}(t)\Bigg)
\Gamma^{\beta+\SS_{t}}_{\alpha+\sum_{i=1}^{K_{m}}m_{i}\delta_{y_{i}^{*}}}.
\end{align*}\qed

As a final comment  concerning the strategy followed for proving the propagation result in Theorems \ref{thm: FV propagation} and \ref{thm: DW propagation}, one could be tempted to work directly
with the duals of the FV and DW processes  \citep{DH82,EK93,E00}. However, this is not optimal, due to the high degree of generality of such dual processes. The simplest path for deriving the propagation step for the nonparametric signals appears to be resorting to the corresponding parametric dual by means of projections and by exploiting the filtering results for those cases.

\bigskip

\end{document}